\RequirePackage{fix-cm}
\documentclass[envcountsame,smallextended]{svjour3}       % onecolumn (second format)
\smartqed  % flush right qed marks, e.g. at end of proof
\usepackage{amsthm}
\usepackage{amssymb}
\usepackage{graphicx}
\usepackage[top=35mm, bottom=25mm, left=18mm, right=18mm]{geometry}
\usepackage[numbers,sort&compress]{natbib}

\usepackage{booktabs}
\usepackage{multirow}
\usepackage{orcidlink}

\usepackage{amsmath}
\usepackage{amsfonts}
\usepackage{amssymb}
\usepackage{enumitem}
\usepackage{graphicx}
\usepackage{subcaption}
\usepackage{tikz}
\usetikzlibrary{arrows.meta, positioning}
\usepackage{pgfplots}
\usetikzlibrary{patterns}
\usepackage{multicol}
\usepackage{hyperref}
\usepackage{xcolor}
\usepackage{cancel}
\usepackage{ulem}
\usepackage[active]{srcltx}

\numberwithin{equation}{section}
\usepackage{subcaption}

\usepackage{booktabs}
\usepackage{multirow}

\usepackage{algorithm}%
\usepackage{algorithmicx}%
\usepackage{algpseudocode}%

\newcommand\R{\mathbb{R}}
\newcommand\Rinf{\overline{\mathbb{R}}}
\newcommand\inter[1]{ {\rm \textbf{int}}(#1)} %interior
\newcommand\closure[1]{ {\rm \textbf{cl}}(#1)} %interior
\newcommand{\stat}{ \bs{{\rm Stat}}}
\newcommand{\crit}{ \bs{{\rm Crit}}}
\newcommand\dom[1]{ \bs{{\rm dom}}(#1)} %domain
\newcommand\Dom[1]{ \bs{{\rm Dom}}(#1)} %domain set-valued
\newcommand\dist{ \bs{{\rm dist}}} %domain set-valued
\newcommand\gf{\varphi} %general function in results
\newcommand\gh{\psi} %general function in preliminaries
\newcommand\fgam[3]{#1_{#3}^{#2}}

\newcommand\prox[3]{ \bs{{\rm prox}}_{#2#1}^{#3}}

\newcommand\ov[1]{\overline{#1}}
\newcommand\mb{\mathbf{B}}
\newcommand\bs[1]{\boldsymbol{#1}}
\newcommand\argmint[1]{\mathop{\bs{\arg\min}}\limits_{#1}}
\newcommand\Nz{\mathbb{N}_0}

\newcommand\argmin[1]{\bs{\arg\min}_{#1}}

\newcommand{\norm}[1]{\left\lVert#1\right\rVert}
\newcommand{\inner}[2]{\left\langle #1,#2\right\rangle}

\newcommand\inft[1]{\mathop{\bs{\inf}}\limits_{#1}}

\newcommand\maxt[1]{\mathop{\bs{\max}}\limits_{#1}}
\newcommand\mint[1]{\mathop{\bs{\min}}\limits_{#1}}
\let\oldliminf\liminf
\renewcommand{\liminf}{\bs{\oldliminf}}
\let\oldlimsup\limsup
\renewcommand{\limsup}{\bs{\oldlimsup}}
\let\oldmax\max
\renewcommand{\max}{\bs{\oldmax}}
\let\oldmin\min
\renewcommand{\min}{\bs{\oldmin}}
\let\oldsup\sup
\renewcommand{\sup}{\bs{\oldsup}}
\let\oldinf\inf
\renewcommand{\inf}{\bs{\oldinf}}
\let\oldlim\lim
\renewcommand{\lim}{\bs{\oldlim}}

\journalname{}

\begin{document}

\title{
Difference-of-Convex Optimization via Inexact Smoothing Descent Methods: Difference of High-Order Moreau Envelopes
}

\titlerunning{Difference-of-Convex Optimization via HOME-DC}        % if too long for running head

\author{Alireza Kabgani\,\orcidlink{0000-0001-6554-6969}
\and Moslem Zamani\,\orcidlink{0000-0003-4086-0999}
\and Masoud Ahookhosh\,\orcidlink{0000-0003-4206-9789}  %etc.
}

\authorrunning{A. Kabgani, M. Zamani, M. Ahookhosh} % if too long for running head

\institute{A. Kabgani, M. Zamani, M. Ahookhosh \at
              Department of Mathematics, University of Antwerp, Antwerp, Belgium. \\
              \email{alireza.kabgani, moslem.zamani, masoud.ahookhosh@uantwerp.be}        
            }

\date{Received: date / Accepted: date}

\maketitle

\vspace{-4mm}
\begin{abstract}
This paper studies difference-of-convex (DC) optimization problems through smoothing descent techniques. In particular, we introduce the difference of high-order Moreau envelopes (HOME-DC) and establish its fundamental and differential properties. Approximating the underlying proximal points, we generate an inexact first-order oracle for HOME-DC and characterize its accuracy guarantees. Building upon this oracle, we propose a class of inexact descent methods for minimizing DC functions and provide a convergence analysis. The proposed framework extends the applicability of envelope-based optimization techniques to a broad class of structured nonconvex problems while accommodating inexact solutions to subproblems. Preliminary numerical experiments on a sparse clustering problem demonstrate the approach's practical potential and support the theoretical findings.

 \keywords{Nonconvex optimization \and Difference-of-convex optimization \and High-order Moreau envelope \and Difference of high-order Moreau envelopes \and Inexact first-order oracle \and Smoothing strategy \and Inexact descent methods}

% \PACS{PACS code1 \and PACS code2 \and more}
 \subclass{90C26 \and 90C25 \and 65K05 \and 49J52 \and 90C30 }
\end{abstract}

%%%%%%%%%%%%%%%%%%%%%%%%%%%%%%%%%%%%%%%%%%%%%%%%%%%%%%%%%%%%%%%%%%%%%%%%%%%%%%%%%%%%%%%%%%%%%%%%%%%%%%%%%%%%%%%%%%%%%%%%%%%%%%%%%%%%%%%%%%%%%%%%%%%%
%%% Section: Introducrtion %%%%%%%%%%%%%%%%%%%%%%%%%%%%%%%%%%%%%%%%%%%%%%%%%%%%%%%%%%%%%%%%%%%%%%%%%%%%%%%%%%%%%%%%%%%%%%%%%%%%%%%%%%%%%%%%%%%%%%%%%
\section{Introduction}
\label{intro}

Difference-of-convex (DC) optimization deals with optimization problems whose objective functions admits a decomposition as the difference of two convex functions. Specifically, we study the unconstrained DC optimization problem
\begin{equation}\label{eq:dc-problem}
    \mint{x\in \R^n}\ \gf (x) := g(x)-h(x),
\end{equation}
where the following standing assumptions are imposed unless stated otherwise.
\begin{assumption}[Basic assumptions]\label{ass:basic}
For problem \eqref{eq:dc-problem}, we assume:
\begin{enumerate}[label=(\textbf{\alph*}), font=\normalfont\bfseries, leftmargin=0.7cm]
\item \label{ass:basic:a} the functions $g,h:\R^n\to\Rinf:=\R\cup\{+\infty\}$ are proper, lower semicontinuous, convex, and possibly  nonsmooth, with $D:=\dom g\cap\dom h\neq \emptyset$;
\item\label{ass:basic:b}  the set of minimizers $\mathcal X^\star:=\argmin{x\in\R^n}\gf(x)$ is nonempty, with $\gf^\star$ denoting the optimal value.
\end{enumerate}
\end{assumption}

Problem \eqref{eq:dc-problem} encompasses a broad class of nonsmooth and nonconvex optimization models arising in areas such as sparse optimization, statistics, machine learning, clustering, signal and image processing, location problems, and global optimization; see, e.g., \cite{bagirov2018nonsmooth,bajaj2022solving,banert2019general,rotaru2025tight,Thi2018DC,LeThiPhamDinhSVM2008,LeThiPhamDinh2007Clustering,LeThiPhamDinh2015Sparse,LeThiPhamDinh2005}. Its appeal lies in the fact that the nonconvexity is entirely captured by the difference of two convex functions, thereby providing exploitable structure. Classical DC programming methods, most notably the DC Algorithm (DCA) and its boosted versions, leverage this decomposition by replacing the concave component $-h$ with affine minorants, yielding a sequence of convex subproblems; see, e.g., \cite{Abbaszadeh2024Rate,aragon2020boosted,AragonArtacho2018Accelerating,aragon2022boosted,ferreira2026inexact,TaoAn1997,HorstThoai1999,Thi2018DC}. These approaches have proven effective across a wide range of applications due to their simplicity and flexibility. However, they rely primarily on subgradient information and do not, in general, induce a smooth reformulation of the original DC objective.

Dating back to 1985, Hiriart-Urruty \cite{HiriartUrruty1985dc,hiriart1991regularize} introduced the difference of Moreau envelopes as a smoothing framework for difference-of-convex functions, namely, the function $\fgam{\gf}{}{\gamma}:\R^n\to\R$ given by
\[
    \fgam{\gf}{}{\gamma}(s):=\fgam{g}{}{\gamma}(s)-\fgam{h}{}{\gamma}(s),
\]
where $\fgam{g}{}{\gamma}$ and $\fgam{h}{}{\gamma}$ are the Moreau envelopes \cite{Moreau65} of $g$ and $h$, respectively, given by
\[
   \fgam{g}{}{\gamma}(s):=\inft{x\in\R^n}\left\{g(x)+\frac1{2\gamma}\|x-s\|^2\right\},  \quad
    \fgam{h}{}{\gamma}(s):=\inft{x\in\R^n}\left\{h(x)+\frac1{2\gamma}\|x-s\|^2\right\}.
\]
The resulting difference-of-envelopes formulation furnishes a differentiable surrogate of the original DC objective. This smoothing mechanism allows the use of first-order information and has motivated a variety of gradient-based approaches for DC optimization; see, e.g., \cite{sun2026equivalence,Sun2023DC,tang2024approximation,Themelis2020DC}. 

Recent advances in proximal mappings and Moreau envelopes with high-order regularization have attracted considerable attention due to their ability to better adapt to the geometry of the objective function; see, e.g., \cite{Nesterov2018,Ahookhosh24,Ahookhosh2025,Kabganidiff,Kabgani24itsopt,Kabgani25itsdeal,Kabgani2026Fundamental,KecisThibault15}. For a proper function $\gh$ and $p>1$, the associated high-order proximal mapping and Moreau envelope are defined by
\[
\prox{\gh}{\gamma}{p}(s) :=\argmint{y\in\R^n} \left\{\gh(y)+\frac1{p\gamma}\|s-y\|^p\right\},  \quad
 \fgam{\gh}{p}{\gamma}(s) :=\inft{y\in\R^n} \left\{\gh(y)+\frac1{p\gamma}\|s-y\|^p\right\}.
\]
These developments naturally motivate the introduction of the difference of high-order Moreau envelopes (HOME-DC)
\[
  \fgam{\gf}{p}{\gamma}(s):=\fgam{g}{p}{\gamma}(s)-\fgam{h}{p}{\gamma}(s),
\]
which serves as a smooth surrogate of the original DC objective $\varphi$. Unlike the classical quadratic case ($p=2$), however, the analysis of this envelope is considerably more delicate. Indeed, the gradient of the regularization term $\|z\|^p/p$  is given by the nonlinear duality mapping $J_p(z):=\|z\|^{p-2}z$, whose regularity properties depend fundamentally on whether  $1<p<2$, $p=2$, or $p>2$; see \cite{XuRoach,Rodomanov2020}. Consequently, the resulting HOME-DC is generally not globally Lipschitz smooth when $p\neq 2$. This observation raises new analytical and algorithmic challenges. Accordingly, this paper focuses on two complementary tasks: the analysis of the fundamental properties of HOME-DC and the development of an inexact descent framework for DC optimization problems of the form \eqref{eq:dc-problem}.

%%% Subsection: Contribution and related works %%%%%%%%%%%%%%%%%%%%%%%%%%%%%%%%%%%%%%%%%%%%%%%%%%%%%%%%%%%%%%%%%%%%%%%%%%%%%%%%%%%%%%%%%%%%%%%%%%%%%%%%%%%%%%%%%%%%%%%%
\subsection{{\bf Contribution}}
Our main contributions are summarized as follows:
\begin{description}
    \item[{\bf (i)}] {\bf Fundamental properties of HOME-DC.} We investigate several fundamental properties of HOME-DC, including two-sided approximation bounds and differential properties, all of which play a central role in the design and analysis of optimization algorithms. Particular attention is devoted to the relationship between stationary points of the original DC objective $\varphi$ and critical points of HOME-DC. In this regard, we establish sufficient conditions under which these two notions correspond, thereby providing a rigorous link between the original problem and its envelope-based reformulation.
    \item[{\bf (ii)}] {\bf Inexact descent methods via HOME-DC.} We develop a general inexact descent framework for DC optimization based on the proposed HOME-DC reformulation. An inexact first-order oracle is constructed from approximate evaluations of the underlying proximal mappings. Using this oracle, we derive an inexact sufficient decrease condition and establish a nonmonotone descent property for HOME-DC. We further prove asymptotic convergence under mild inexactness assumptions, showing that every accumulation point of the generated sequence is a critical point of HOME-DC. Finally, our preliminary numerical experiments on a sparse clustering problem illustrate the potential of the proposed framework.  
\end{description}

%%% Subsection: Organization %%%%%%%%%%%%%%%%%%%%%%%%%%%%%%%%%%%%%%%%%%%%%%%%%%%%%%%%%%%%%%%%%%%%%%%%%%%%%%%%%%%%%%%%%%%%%%%%%%%%%%%%%%%%%%%%%%%%%%%%
\subsection{{\bf Organization}}\label{sec:contribution}
The paper is organized as follows. In Section~\ref{sec:prelim}, 
we introduce the necessary notation and preliminaries.
In Section~\ref{sec:structure}, we introduce the difference of high-order Moreau envelopes (HOME-DC) and prove the structural facts:
well-definedness, optimality conditions, high-order proximal representation of DC-stationarity,
and the lift/projection relation between envelope criticality and DC-stationarity.
In Section~\ref{sec:algorithm}, we define the inexact method and establish a subsequential
convergence theorem. In Section~\ref{sec:residual}, we derive a practical residual-based inexactness condition
for the lower-level subproblems in the regime $p\ge 2$. In Section~\ref{sec:numerical}, we report our numerical experiments on a sparse clustering problem. We conclude with remarks in Section~\ref{sec:Conclusion}.

%%% End Section: Introduction %%%%%%%%%%%%%%%%%%%%%%%%%%%%%%%%%%%%%%%%%%%%%%%%%%%%%%%%%%%%%%%%%%%%%%%%%%%%%%%%%%%%%%%%%%%%%%%%%%%%%%%%%%%%%%%%%%%%%%
%%%%%%%%%%%%%%%%%%%%%%%%%%%%%%%%%%%%%%%%%%%%%%%%%%%%%%%%%%%%%%%%%%%%%%%%%%%%%%%%%%%%%%%%%%%%%%%%%%%%%%%%%%%%%%%%%%%%%%%%%%%%%%%%%%%%%%%%%%%%%%%%%%%%

%%%%%%%%%%%%%%%%%%%%%%%%%%%%%%%%%%%%%%%%%%%%%%%%%%%%%%%%%%%%%%%%%%%%%%%%%%%%%%%%%%%%%%%%%%%%%%%%%%%%%%%%%%%%%%%%%%%%%%%%%%%%%%%%%%%%%%%%%%%%%%%%%%%%
%%% Section: Preliminaries and notation %%%%%%%%%%%%%%%%%%%%%%%%%%%%%%%%%%%%%%%%%%%%%%%%%%%%%%%%%%%%%%%%%%%%%%%%%%%%%%%%%%%%%%%%%%%%%%%%%%%%%%%%%%%

\section{Preliminaries and Notation}\label{sec:prelim}

Throughout this paper, $\R^n$ denotes the $n$-dimensional \textit{Euclidean space}, 
while $\Vert \cdot \Vert$ and $\langle \cdot, \cdot \rangle$ represent the \textit{Euclidean norm}
 and \textit{inner product}, respectively. 
We denote the set of \textit{natural numbers} by $\mathbb{N}$ and let $\Nz := \mathbb{N} \cup \{0\}$. 
The set $\mb(\ov{x}; r)$ is the \textit{open ball} centered at $\ov{x} \in \R^n$ with radius $r > 0$. 
The \textit{interior} and \textit{closure} of a set $C \subseteq \R^n$ are denoted by $\inter{C}$ and
$\closure{C}$, respectively. 
For a nonempty set $C\subseteq\mathbb R^n$, the \textit{distance} from $x \in \R^n$ to $C$ is defined as
$\dist(x, C) := \bs\inf_{y \in C} \Vert y - x \Vert$. 
We adopt the convention $\infty - \infty = \infty$.
The \textit{effective domain} of $\gh: \R^n \to \Rinf := \R \cup \{+\infty\}$ is $\dom{\gh} := \{x \in \R^n \mid \gh(x) < +\infty\}$
and $\gh$ is called \textit{proper} if $\dom{\gh} \neq \emptyset$. 
The set $\mathcal{L}(\gh, \lambda) := \{x \in \R^n \mid \gh(x) \leq \lambda\}$ is the \textit{sublevel set} of $\gh$ at $\lambda \in \R$. 
The set of \textit{minimizers} of $\gh$ over $C \subseteq \R^n$ is denoted by $\argmin{x \in C} \gh(x)$. 
The function $\gh$ is \textit{lower semicontinuous} (lsc) at $\ov{x} \in \R^n$ if $\bs\liminf_{x \to \ov{x}} \gh(x) \geq \gh(\ov{x})$. 
It is \textit{coercive} if $\bs\lim_{\Vert x \Vert \to +\infty} \gh(x) = +\infty$. 
For a set-valued map $\Psi: \R^n \rightrightarrows \R^n$, the \textit{domain} is defined as $\Dom{\Psi} := \{x \in \R^n \mid \Psi(x) \neq \emptyset\}$. 
We say that a mapping $T:\R^n\to\R^n$ is \textit{locally surjective} at $(\ov s,\ov x)$, where $T(\ov s)=\ov x$, if there exist neighborhoods $U$ of $\ov s$ and $V$ of
$\ov x$ such that $V\subseteq T(U)$.

If $p > 1$, the gradient of $ \Psi_p(x):=\frac{1}{p} \Vert x \Vert^p$ is 
\[
 J_p(x):=\nabla \Psi_p(x)=\norm{x}^{p-2}x,
\]
with the convention $J_p(0)=0$.
Let $q:=\frac{p}{p-1}$ denote the conjugate exponent. Then
$J_q$ is the inverse of $J_p$, i.e.,
\[
    J_q(J_p(x))=x,     \quad    J_p(J_q(y))=y,
    \quad \forall x,y\in\R^n.
\]
Moreover, for $\alpha\ge 0$,
\[
    J_p(\alpha J_q(x))=\alpha^{p-1}x.
\]

\begin{lemma}[Basic inequalities]\label{lem:findlowbounknu:lemma} Assume that $p>1$. Then there exist constants $a_p, b_p>0$ such that, for all $x,y\in\mathbb R^n$ with $x\neq y$, the following estimates hold:
\begin{enumerate}[label=(\textbf{\alph*}), font=\normalfont\bfseries, leftmargin=0.7cm]
\item \label{lem:findlowbounknu:lemma:e1} 
$\langle J_p(x) - J_p(y), x-y\rangle\geq a_p \left(\max\{\Vert x\Vert, \Vert y\Vert\}\right)^{p-2}  \Vert x - y\Vert^2.$

\item \label{lem:findlowbounknu:lemma:e3}
$\left\Vert J_p(x) - J_p(y)\right\Vert\leq b_p(\max\{\Vert x\Vert, \Vert y\Vert\})^{p-2}\Vert x - y\Vert.$
\end{enumerate}
If $x=y$, both inequalities hold trivially after interpreting the right-hand sides as zero.
\end{lemma}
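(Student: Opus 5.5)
Both estimates follow from the integral representation of $J_p(x)-J_p(y)$ along the segment $z_t:=y+t(x-y)$, $t\in[0,1]$. Away from the origin $J_p$ is $C^1$ with Jacobian
\[
\nabla J_p(z)=\Vert z\Vert^{p-2}\Big(I+(p-2)\tfrac{zz^\top}{\Vert z\Vert^2}\Big),
\]
whose eigenvalues are $\Vert z\Vert^{p-2}$ (multiplicity $n-1$) and $(p-1)\Vert z\Vert^{p-2}$. Hence, for every $v$,
\[
\min\{1,p-1\}\Vert z\Vert^{p-2}\Vert v\Vert^2\le\langle \nabla J_p(z)v,v\rangle, \qquad \Vert\nabla J_p(z)\Vert\le\max\{1,p-1\}\Vert z\Vert^{p-2}.
\]
If the segment meets the origin, it does so at most once (since $x\neq y$). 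Then $t\mapsto J_p(z_t)$ is still absolutely continuous, because $\Vert z_t\Vert^{p-2}$ is integrable in $t$ for $p>1$. So
\[
J_p(x)-J_p(y)=\int_0^1\nabla J_p(z_t)(x-y)\,dt.
\]
Set $M:=\max\{\Vert x\Vert,\Vert y\Vert\}>0$. This gives
\[
\langle J_p(x)-J_p(y),x-y\rangle\ge \min\{1,p-1\}\Vert x-y\Vert^2\int_0^1\Vert z_t\Vert^{p-2}dt,
\]
\[
\Vert J_p(x)-J_p(y)\Vert\le \max\{1,p-1\}\Vert x-y\Vert\int_0^1\Vert z_t\Vert^{p-2}dt .
\]
Everything therefore reduces to comparing $I:=\int_0^1\Vert z_t\Vert^{p-2}dt$ with $M^{p-2}$, split by the sign of $p-2$.

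\textbf{Case $p\ge 2$.} Convexity of the norm gives $\Vert z_t\Vert\le M$, so $I\le M^{p-2}$, which proves (b) with $b_p=p-1$. For (a) I need a lower bound $I\ge c\,M^{p-2}$. Assume without loss of generality $\Vert x\Vert=M$. For $t\in[3/4,1]$,
\[
\Vert z_t\Vert\ge \Vert x\Vert-(1-t)\Vert x-y\Vert\ge M-\tfrac14\cdot 2M=M/2 .
\]
Hence $I\ge \tfrac14 2^{2-p}M^{p-2}$, giving $a_p=2^{-p}$.

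\textbf{Case $1<p<2$.} The roles swap. Since $\Vert z_t\Vert\le M$ and $p-2<0$, we get $\Vert z_t\Vert^{p-2}\ge M^{p-2}$, so $I\ge M^{p-2}$ and (a) holds with $a_p=p-1$. For (b) I need an upper bound on $I$, which is the main obstacle, since the integrand can blow up near the origin. The plan is to use $\Vert z_t\Vert\ge \dist(0,\text{line})$ together with a one-dimensional estimate. Write $\Vert z_t\Vert\ge |\,\Vert x-y\Vert t-t_0|$, the distance along the line to the foot of the perpendicular from $0$. Then
\[
I\le \int_0^1 |\ell t-t_0|^{p-2}dt\le \frac{2}{p-1}\Big(\frac{\ell}{2}\Big)^{p-2}, \qquad \ell:=\Vert x-y\Vert\le 2M,
\]
because the integral is maximized when the singularity is centered. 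This bounds $I\,\Vert x-y\Vert$ by $C_p\,\ell^{p-1}$, not by $M^{p-2}\ell$. Since $p-2<0$ and $\ell\le 2M$, we have $\ell^{p-2}\ge (2M)^{p-2}$, which goes the wrong way.

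So for (b) with $p<2$, I would instead split into two subcases.
\begin{itemize}
\item If $\ell\le M/2$, then, assuming $\Vert x\Vert=M$, the whole segment satisfies $\Vert z_t\Vert\ge M/2$, so $I\le 2^{2-p}M^{p-2}$.
\item If $\ell> M/2$, bound directly: $\Vert J_p(x)-J_p(y)\Vert\le \Vert x\Vert^{p-1}+\Vert y\Vert^{p-1}\le 2M^{p-1}< 4M^{p-2}\ell$.
\end{itemize}
This gives $b_p=\max\{2^{2-p}(p-1)\max\{1,p-1\},4\}$, avoiding the singular integral entirely. The same near/far split could also serve as a backup for the lower bound in (a) when $p\ge2$. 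The trivial case $x=y$ needs no argument.
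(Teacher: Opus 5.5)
Your proof is correct, and it takes a genuinely different route from the paper's. The paper computes nothing directly. It imports the Xu--Roach characteristic inequalities for uniformly convex and uniformly smooth Banach spaces and specializes the moduli to the Hilbert case, where $\delta(\epsilon)=1-\sqrt{1-\epsilon^2/4}$ and $\rho(\tau)=\sqrt{1+\tau^2}-1$. It then linearizes using $1-\sqrt{1-a}\ge a/2$ and $\sqrt{1+a}-1\le a$, which gives $a_p=A_p/32$ and $b_p=64/B_p$, with $A_p,B_p$ left implicit.

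You instead work directly in $\mathbb R^n$. The spectral description of $\nabla J_p$ and the integral mean-value formula along $z_t$ reduce both estimates to comparing $\int_0^1\Vert z_t\Vert^{p-2}\,dt$ with $M^{p-2}$. You also correctly observe that, for each sign of $p-2$, one of the two estimates follows immediately from $\Vert z_t\Vert\le M$, while the other needs a separate device:
\begin{itemize}
\item for (a) when $p\ge 2$, the tail $t\in[3/4,1]$ where $\Vert z_t\Vert\ge M/2$;
\item for (b) when $1<p<2$, the split $\Vert x-y\Vert\le M/2$ versus $\Vert x-y\Vert>M/2$, which neatly avoids the singular integral.
\end{itemize}
You were right to abandon the line-distance estimate, since it only yields the H\"older bound.

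The trade-off is as follows. The paper's route is shorter and inherits the Banach-space generality of Xu--Roach. Yours is self-contained, uses only calculus, and yields explicit constants: $a_p=2^{-p}$ and $b_p=p-1$ for $p\ge 2$, and $a_p=p-1$ and $b_p=4$ for $1<p<2$. This in turn makes the Lipschitz constant $L_p=1+b_p/a_p$ of the high-order proximal map in Theorem~\ref{th:home:lip} explicit.

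One cosmetic slip: in the subcase $\Vert x-y\Vert\le M/2$, the constant is $\max\{1,p-1\}\,2^{2-p}=2^{2-p}$, without the extra factor $(p-1)$ you wrote. This is harmless, because $2^{2-p}<4$, so $b_p=4$ stands.
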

\begin{proof}
Let $x\neq y$ and set $M:=\max\{\|x\|,\|y\|\}$.
\\
\ref{lem:findlowbounknu:lemma:e1}: By \cite[Remark 1 and eq. (1.1)]{XuRoach}, there exists a constant $A_p>0$, depending only on $p$, such that
\[
\langle J_p(x) - J_p(y), x-y\rangle\geq A_p M^p \left(1-\sqrt{1-\frac{\Vert x - y\Vert^2}{16M^2}}\right).
\]
We note that
\[
1-\sqrt{1-\frac{\Vert x - y\Vert^2}{16M^2}}\geq \frac{\Vert x - y\Vert^2}{32M^2}.
\]
Thus, setting $a_p=\frac{A_p}{32}$ proves the claim.
\\
\ref{lem:findlowbounknu:lemma:e3}: By\cite[eqs. (1.3) and (3.5)]{XuRoach}, there exists a constant $B_p$, depending only on $p$, such that
\begin{equation}\label{eq1:lem:findlowbounknu:lemma:e3}
\left\Vert J_p(x) - J_p(y) \right\Vert\leq \frac{B_p M^{p}}{\Vert x - y\Vert}\left(\sqrt{1+\frac{64\Vert x - y\Vert^2}{B_p^2M^2}}-1\right).
\end{equation}
Since
\[
\sqrt{1+\frac{64\Vert x - y\Vert^2}{B_p^2M^2}}-1\leq \frac{64\Vert x - y\Vert^2}{B_p^2M^2},
\]
setting $b_p=\frac{64}{B_p}$ proves the claim.
\end{proof}

%%%%%
\begin{remark}[Regularity of the duality map]\label{rem:Jp-regularity}
When $p>2$, the factor $\left(\max\{\|x\|,\|y\|\}\right)^{p-2}$ in Lemma~\ref{lem:findlowbounknu:lemma}~\ref{lem:findlowbounknu:lemma:e3} is bounded on bounded sets, so $J_p$ is Lipschitz continuous on bounded subsets of $\mathbb R^n$. When $1<p<2$, this factor may blow up near the origin. In that range one should instead use the standard global H\"older estimate
\[
\|J_p(x)-J_p(y)\|\le 2^{2-p}\|x-y\|^{p-1}, \quad \forall x,y\in\mathbb R^n,
\]
see, for instance, \cite[Theorem~6.3]{Rodomanov2020}.
\end{remark}

A point $\ov{x}$ is called a \textit{limit point} of a sequence $\{x^k\}_{k \in \Nz}$ if $x^k \to \ov{x}$, and it is called a \textit{cluster point} if there exists a subsequence $x^j \to \ov{x}$ with $j\in J$
for some infinite subset $J \subseteq \Nz$. 

%%%%%%%%%%%%%%%%%%%%%%%%%%%%%%%%%%%%%%%%%%%%
A proper function $\gh: \R^n \to \Rinf$  is  \textit{Fr\'{e}chet differentiable} at $\ov{x}\in \inter{\dom{\gh}}$ 
with \textit{Fr\'{e}chet derivative}  
$\nabla \gh(\ov{x})$
 if 
\[
\mathop{\bs\lim}\limits_{x\to \ov{x}}\frac{\gh(x) -\gh(\ov{x}) - \langle \nabla \gh(\ov{x}) , x - \ov{x}\rangle}{\Vert x - \ov{x}\Vert}=0.
\]
For a set $C\subseteq\R^n$, the notation $\gh\in \mathcal{C}^{k}(C)$ indicates that $\gh$ is $k$-times continuously differentiable on $C$,
where $k\in \mathbb{N}$. 

\begin{definition}[H\"{o}lder continuous gradient]
A proper function $\gh: \R^n \to\Rinf$  is said to have a \textit{$\nu$-H\"{o}lder continuous gradient} on $C\subseteq \dom{\gh}$ with $\nu\in (0, 1]$ if it is Fr\'{e}chet differentiable at every point of $C$ and  there exists a constant $L_\nu\geq 0$ such that
\begin{equation}\label{eq:nu-Holder continuous gradient}
\Vert \nabla \gh(y)- \nabla \gh(x)\Vert \leq L_\nu \Vert y-x\Vert^\nu, \qquad \forall x, y\in C.
\end{equation}
\end{definition}
The class of such functions is denoted by $\mathcal{C}^{1, \nu}_{L_\nu}(C)$, and its elements are called weakly smooth.
%%%%%%%%%%%%%%%%%%%%%%%%%%%%%%%%%%%%%%%%%%%%

%%%%%%%%%%%%%%%%%%%%%%%%%%%%%%%%%%%%%%%%%%%%%%%%%%%%%%%%%%%%%%%%%%%%%%%%%%%%%%%%%%%%%%%%%%%%%%
\subsection{{\bf HOME: High-Order Moreau Envelope}} \label{sec:home}
Recent studies have renewed interest in the \textit{high-order proximal operator} (HOPE) and the \textit{high-order Moreau envelope} (HOME) as tools for designing practical algorithms for nonsmooth and nonconvex optimization %\cite{Ahookhosh2025,Kabgani24itsopt,Kabganidiff,Kabgani25itsdeal,KecisThibault15}.
\cite{Ahookhosh2025,Kabgani24itsopt,KecisThibault15}.
We briefly review these concepts and related properties.

\begin{definition}[High-order proximal operator and Moreau envelope]\label{def:hope-home}
Let $p>1$, $\gamma>0$, and $\gh: \R^n \to \Rinf$ be a proper function. 
The \textit{high-order proximal operator} (\textit{HOPE}) of $\gh$ with parameter $\gamma$,
$\prox{\gh}{\gamma}{p}: \R^n \rightrightarrows \R^n$, is 
    \begin{equation}\label{eq:Hiorder-Moreau prox}
       \prox{\gh}{\gamma}{p} (x):=\argmint{y\in \R^n} \left\{\gh(y)+\frac{1}{p\gamma}\Vert x- y\Vert^p\right\},
    \end{equation}     
and the \textit{high-order Moreau envelope} (\textit{HOME}) of $\gh$ with parameter $\gamma$, 
$\fgam{\gh}{p}{\gamma}: \R^n\to \R\cup\{\pm \infty\}$, 
is 
    \begin{equation}\label{eq:Hiorder-Moreau env}
    \fgam{\gh}{p}{\gamma}(x):=\mathop{\bs{\inf}}\limits_{y\in \R^n} \left\{\gh(y)+\frac{1}{p\gamma}\Vert x- y\Vert^p\right\}.
    \end{equation}
    \end{definition}

\begin{remark}
Definition~\ref{def:hope-home} is the direct high-order analogue of the Moreau envelope and the
proximal mapping. In the convex setting, the regularization term is coercive and strictly convex
for every $p>1$, which makes the corresponding proximal problems particularly well behaved.
In the broader nonconvex setting, the well-definedness of these high-order objects
is studied in \cite{Kabganidiff}. 
\end{remark}

\begin{fact}\cite[Proposition 12.15]{Bauschke17}\label{prop:horder:Bauschke17:p12.15}
Let $\gh:\R^n \to \Rinf$ be a proper lsc convex function and $p> 1$. Then the following hold for every $\gamma>0$ and $x\in \R^n$:
\begin{enumerate}[label=(\textbf{\alph*}), font=\normalfont\bfseries, leftmargin=0.7cm]
  \item \label{prop:horder:Bauschke17:p12.15:conv} $\fgam{\gh}{p}{\gamma}$ is convex, real-valued, continuous, and its infimum is attained;
  \item  \label{prop:horder:Bauschke17:p12.15:unique} $ \prox{\gh}{\gamma}{p}(x)$ is nonempty and single-valued.
\end{enumerate}
\end{fact}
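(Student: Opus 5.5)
We prove the two parts of the Fact in order, working directly with the function $\Phi_x(y):=\gh(y)+\frac{1}{p\gamma}\|x-y\|^p$, whose infimum is $\fgam{\gh}{p}{\gamma}(x)$. The plan is to establish existence and uniqueness of minimizers of $\Phi_x$ first, which gives \ref{prop:horder:Bauschke17:p12.15:unique} and real-valuedness. Convexity and continuity then follow from an infimal-convolution argument. We read ``its infimum is attained'' in \ref{prop:horder:Bauschke17:p12.15:conv} as attainment of the infimum defining $\fgam{\gh}{p}{\gamma}(x)$, which is what the existence step provides.

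\emph{Existence and uniqueness.} Fix $x$. Since $\gh$ is proper, lsc and convex, it has an affine minorant: there are $a\in\R^n$ and $b\in\R$ with $\gh(y)\ge\langle a,y\rangle+b$ for all $y$. Hence
\[
\Phi_x(y)\ge \langle a,y\rangle+b+\tfrac{1}{p\gamma}\|x-y\|^p .
\]
Because $p>1$, the right-hand side tends to $+\infty$ as $\|y\|\to\infty$, so $\Phi_x$ is coercive. It is also lsc and proper: take any $y_0\in\dom{\gh}$ to see $\Phi_x(y_0)<\infty$. The sublevel set $\mathcal L(\Phi_x,\Phi_x(y_0))$ is therefore nonempty and compact. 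By Weierstrass, $\Phi_x$ attains its minimum, and the minimum value is finite.

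For uniqueness, note that $y\mapsto\frac1p\|x-y\|^p$ is strictly convex for $p>1$. Indeed, suppose $y_1\ne y_2$ and consider the midpoint. Either the norms $\|x-y_i\|$ differ, and then strict convexity of $t\mapsto t^p$ on $[0,\infty)$ applies. Or the norms are equal but the vectors $x-y_i$ are not parallel, and then the strict triangle inequality of the Euclidean norm applies. The only way the midpoint inequality could be an equality is $y_1=y_2$. Adding the convex function $\gh$ keeps $\Phi_x$ strictly convex, so its minimizer is unique. This gives \ref{prop:horder:Bauschke17:p12.15:unique}, and it also shows $\fgam{\gh}{p}{\gamma}(x)=\min\Phi_x\in\R$.

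\emph{Convexity.} Write $\fgam{\gh}{p}{\gamma}=\gh\,\square\,\big(\tfrac{1}{p\gamma}\|\cdot\|^p\big)$ as an infimal convolution. Its epigraph (strict version) is the sum of the strict epigraphs of the two convex functions, hence convex. Equivalently, one can argue directly. Take $x_1,x_2$ with minimizers $y_1,y_2$, and for $\lambda\in[0,1]$ put $x_\lambda=\lambda x_1+(1-\lambda)x_2$ and $y_\lambda=\lambda y_1+(1-\lambda)y_2$. Testing the infimum at $y_\lambda$ and using joint convexity of $(x,y)\mapsto\gh(y)+\frac1{p\gamma}\|x-y\|^p$ gives
\[
\fgam{\gh}{p}{\gamma}(x_\lambda)\le\lambda\,\fgam{\gh}{p}{\gamma}(x_1)+(1-\lambda)\,\fgam{\gh}{p}{\gamma}(x_2).
\]

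\emph{Continuity.} A finite convex function on $\R^n$ is continuous, so continuity follows from real-valuedness and convexity. Alternatively, one can bound $\fgam{\gh}{p}{\gamma}$ above near any point by $\Phi_x(y_0)$, which is locally bounded in $x$, and invoke the standard result that a convex function bounded above near a point is continuous there.

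The only step needing care is the coercivity bound. There the affine minorant is essential: a mere lower bound on $\gh$ need not exist, but the minorant's linear growth is dominated by $\|\cdot\|^p$ precisely because $p>1$. Everything else is routine.
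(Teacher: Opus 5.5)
Your proof is correct and follows the standard argument for this fact. The paper gives no proof of its own; it cites Bauschke--Combettes, Proposition 12.15, which rests on the same ingredients: an affine minorant of $\gh$ together with supercoercivity of $\|\cdot\|^p$ (giving exactness and real-valuedness), strict convexity of $\|\cdot\|^p$ for $p>1$ (uniqueness), convexity of infimal convolutions, and continuity of finite convex functions.

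Your reading of ``its infimum is attained'' as attainment of the infimum defining $\fgam{\gh}{p}{\gamma}(x)$ is the right one. The infimum of $\fgam{\gh}{p}{\gamma}$ over $\R^n$ need not be attained, e.g., for $\gh(y)=e^y$ on $\R$. The only wording to tighten is in the equal-norm case of the strict-convexity argument: what you need is that $x-y_1$ and $x-y_2$ are not \emph{positively} collinear, which is automatic for distinct vectors of equal norm.
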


\begin{theorem}[Lipschitz continuity of proximal operator]\label{th:home:lip}
Let $p>1$ and let $\gh:\mathbb{R}^n \to \Rinf$ be proper, lsc, and convex.
Then there exists $L_p>0$, depending only on $p$, such that for every $\gamma>0$ and all $x_1, x_2 \in \R^n$,
\begin{equation}\label{lochol:maineq}
\Vert \prox{\gh}{\gamma}{p} (x_2) -  \prox{\gh}{\gamma}{p} (x_1)\Vert\leq L_p\Vert x_2-x_1\Vert.
\end{equation}
\end{theorem}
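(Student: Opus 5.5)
The plan is to use the first-order optimality conditions of the two proximal subproblems together with monotonicity of $\partial\gh$. Then the two estimates of Lemma~\ref{lem:findlowbounknu:lemma} can be compared directly; their common factor $M^{p-2}$ cancels.

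Fix $x_1,x_2\in\R^n$ and set $y_i:=\prox{\gh}{\gamma}{p}(x_i)$, which are well defined and unique by Fact~\ref{prop:horder:Bauschke17:p12.15}. The function $y\mapsto \frac{1}{p\gamma}\|x_i-y\|^p$ is convex, differentiable and finite everywhere, so the subdifferential sum rule applies. Since $y_i$ minimizes a convex function, Fermat's rule gives
\[
\tfrac{1}{\gamma}J_p(x_i-y_i)\in\partial\gh(y_i),\qquad i=1,2.
\]
Put $u_i:=x_i-y_i$. Monotonicity of $\partial\gh$ and multiplication by $\gamma>0$ give $\langle J_p(u_1)-J_p(u_2),\,y_1-y_2\rangle\ge 0$. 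Writing $y_1-y_2=(x_1-x_2)-(u_1-u_2)$ and applying Cauchy--Schwarz yields
\[
\langle J_p(u_1)-J_p(u_2),\,u_1-u_2\rangle\le \|J_p(u_1)-J_p(u_2)\|\,\|x_1-x_2\|.
\]
The parameter $\gamma$ has now disappeared, which is why the constant will be independent of $\gamma$.

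Next, assume $u_1\neq u_2$ and set $M:=\max\{\|u_1\|,\|u_2\|\}$. Then $M>0$. Lemma~\ref{lem:findlowbounknu:lemma}\ref{lem:findlowbounknu:lemma:e1} bounds the left-hand side from below by $a_pM^{p-2}\|u_1-u_2\|^2$. Lemma~\ref{lem:findlowbounknu:lemma}\ref{lem:findlowbounknu:lemma:e3} bounds the right-hand side from above by $b_pM^{p-2}\|u_1-u_2\|\,\|x_1-x_2\|$. Dividing by $M^{p-2}\|u_1-u_2\|>0$ gives
\[
\|u_1-u_2\|\le \tfrac{b_p}{a_p}\|x_1-x_2\|,
\]
and this bound holds trivially when $u_1=u_2$. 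Finally,
\[
\|y_1-y_2\|\le\|x_1-x_2\|+\|u_1-u_2\|\le\Bigl(1+\tfrac{b_p}{a_p}\Bigr)\|x_1-x_2\|,
\]
so we may take $L_p:=1+b_p/a_p$, which depends only on $p$.

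The main obstacle is the cancellation step. For $p\neq2$ neither estimate of Lemma~\ref{lem:findlowbounknu:lemma} is a uniform strong-monotonicity or Lipschitz bound on its own: for $1<p<2$ the factor $M^{p-2}$ blows up near the origin, and for $p>2$ it blows up for large $M$. The argument works only because both estimates carry the same power of the same $M$. Therefore I will check carefully that the lemma is applied at the same pair $(u_1,u_2)$ on both sides, and that $M>0$ whenever $u_1\neq u_2$.
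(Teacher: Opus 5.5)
Your proposal is correct and is essentially the paper's own proof. It uses the optimality conditions $\frac{1}{\gamma}J_p(x_i-y_i)\in\partial\gh(y_i)$ together with monotonicity of $\partial\gh$, then applies both estimates of Lemma~\ref{lem:findlowbounknu:lemma} at the same pair so that $M^{p-2}$ cancels, giving $L_p=1+\frac{b_p}{a_p}$. The only differences (naming $u_i$ instead of $v_i$, and explicitly justifying the subdifferential sum rule) are cosmetic.
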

\begin{proof} 
Let $y_i=\prox{\gh}{\gamma}{p} (x_i)$ and $v_i:=x_i-y_i$, $i=1, 2$, where $y_i$ is well-defined by Fact \ref{prop:horder:Bauschke17:p12.15}.
 The optimality condition for the high-order proximal subproblem gives
 \begin{equation}\label{lochol:eq1}
  \frac{1}{\gamma}J_p(v_i)\in \partial \gh(y_i), \quad i=1,2.
\end{equation}
By monotonicity of $\partial\gh$, $ \left\langle J_p(v_2) - J_p(v_1), y_2 -y_1\right\rangle\geq 0$.
Since $y_2 - y_1 =(x_2-x_1)-(v_2-v_1)$,
\begin{equation}\label{lochol:eq5:2}
 \langle J_p(v_2) - J_p(v_1), v_2 -v_1\rangle\leq \langle J_p(v_2) - J_p(v_1), x_2 -x_1\rangle.
\end{equation}
If $v_2 = v_1$, the claim  follows. Otherwise, by Lemma~\ref{lem:findlowbounknu:lemma}~\ref{lem:findlowbounknu:lemma:e1}, there exists a constant $a_p>0$ such that
\[
\langle J_p(v_2) - J_p(v_1), v_2-v_1\rangle\geq a_pM^{p-2}\Vert  v_2 - v_1\Vert^2,
\]
where $M= \max\{\Vert v_2\Vert, \Vert v_1\Vert\}$.
Combining this with \eqref{lochol:eq5:2} and applying the Cauchy--Schwarz inequality and Lemma~\ref{lem:findlowbounknu:lemma}~\ref{lem:findlowbounknu:lemma:e3}, we get
\[
a_pM^{p-2}\|v_2-v_1\|^2 \le \|J_p(v_2)-J_p(v_1)\|\,\|x_2-x_1\| \le b_pM^{p-2}\|v_2-v_1\|\,\|x_2-x_1\|.
\]
Since $M>0$ and $v_2\neq v_1$, cancellation gives
\[
\|y_2-y_1\| \le \|x_2-x_1\|+\|v_2-v_1\| \le \left(1+\frac{b_p}{a_p}\right)\|x_2-x_1\|.
\]
Therefore the proximal map is globally Lipschitz with $L_p:=1+\frac{b_p}{a_p}$.
\end{proof}

\begin{theorem}[Differentiability and weak smoothness of HOME]\label{th:locholof gra:th}
Let $p>1$ and let $\gh: \R^n \to \Rinf$ be proper, lsc, and convex.
Then, for each $\gamma>0$, $\fgam{\gh}{p}{\gamma}$ is continuously differentiable on $\R^n$ and
\begin{align}\label{eq:formulaofdiff}
\nabla \fgam{\gh}{p}{\gamma}(x)=\frac{1}{\gamma} J_p\bigl(x-\prox{\gh}{\gamma}{p}(x)\bigr), \quad \forall x\in\R^n.
\end{align}
Moreover, the following regularity properties hold.
\begin{enumerate}[label=(\textbf{\alph*}), font=\normalfont\bfseries, leftmargin=0.7cm]
\item\label{th:locholof gra:th:p1,2} If $p\in (1,2]$, then $\nabla\fgam{\gh}{p}{\gamma}$ is globally H\"older continuous with exponent $p-1$; in particular, it is globally Lipschitz when $p=2$. 

\item \label{th:locholof gra:th:p>2} If $p>2$, then $\nabla\fgam{\gh}{p}{\gamma}$ is Lipschitz continuous on every bounded subset of $\R^n$. 

\item \label{th:locholof gra:th:p>2:lip} If $\gh$ is Lipschitz continuous on $\R^n$, then for every $p\ge2$, $\nabla\fgam{\gh}{p}{\gamma}$ is globally Lipschitz continuous on $\mathbb R^n$.
\end{enumerate}

\end{theorem}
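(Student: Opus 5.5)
We first establish the gradient formula \eqref{eq:formulaofdiff}, then obtain the regularity statements by composing the Lipschitz bound of Theorem~\ref{th:home:lip} with the estimates for $J_p$ from Lemma~\ref{lem:findlowbounknu:lemma} and Remark~\ref{rem:Jp-regularity}.

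\textbf{Gradient formula.} Fix $x$ and set $y=\prox{\gh}{\gamma}{p}(x)$, which is single-valued by Fact~\ref{prop:horder:Bauschke17:p12.15}. Let $g:=\frac1\gamma J_p(x-y)$.
\begin{enumerate}
\item \emph{Upper bound.} Plugging $y$ into the definition of the envelope at $x+d$ gives
\[
\fgam{\gh}{p}{\gamma}(x+d)-\fgam{\gh}{p}{\gamma}(x)\le \tfrac{1}{p\gamma}\bigl(\|x+d-y\|^p-\|x-y\|^p\bigr)=\langle g,d\rangle+o(\|d\|),
\]
by differentiability of $\Psi_p$.
\item \emph{Lower bound.} Let $y_d=\prox{\gh}{\gamma}{p}(x+d)$. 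The same argument at $x+d$ with $y_d$ inserted at $x$ yields
\[
\fgam{\gh}{p}{\gamma}(x+d)-\fgam{\gh}{p}{\gamma}(x)\ge \tfrac{1}{p\gamma}\bigl(\|x+d-y_d\|^p-\|x-y_d\|^p\bigr)\ge \tfrac1\gamma\langle J_p(x-y_d),d\rangle,
\]
using convexity of $\Psi_p$. Since $y_d\to y$ by Theorem~\ref{th:home:lip} and $J_p$ is continuous, the right-hand side equals $\langle g,d\rangle+o(\|d\|)$.
\item \emph{Conclusion.} The two bounds give Fr\'echet differentiability with gradient $g$. Continuity of $\nabla\fgam{\gh}{p}{\gamma}$ follows since it is the composition of the continuous $J_p$ with the Lipschitz map $x\mapsto x-\prox{\gh}{\gamma}{p}(x)$, whose constant is at most $1+L_p$.
\end{enumerate}

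\textbf{(a) and (b).} Write $v_i=x_i-\prox{\gh}{\gamma}{p}(x_i)$, so $\|v_2-v_1\|\le(1+L_p)\|x_2-x_1\|$.
\begin{itemize}
\item For $p\in(1,2]$, the global H\"older estimate of Remark~\ref{rem:Jp-regularity} gives
\[
\|\nabla\fgam{\gh}{p}{\gamma}(x_2)-\nabla\fgam{\gh}{p}{\gamma}(x_1)\|\le \tfrac{2^{2-p}}{\gamma}(1+L_p)^{p-1}\|x_2-x_1\|^{p-1}.
\]
For $p=2$ this is global Lipschitz continuity.
\item For $p>2$ on a bounded set $B$, we need $\|v_i\|$ bounded. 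Fix $x_0$; then $\|v(x)\|\le\|v(x_0)\|+(1+L_p)\|x-x_0\|$, so $\max\|v_i\|\le R_B$ on $B$. Lemma~\ref{lem:findlowbounknu:lemma}\ref{lem:findlowbounknu:lemma:e3} then gives the Lipschitz constant $\frac{b_p}{\gamma}R_B^{p-2}(1+L_p)$.
\end{itemize}

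\textbf{(c).} This is the main obstacle: the factor $M^{p-2}$ must be bounded globally, not just on bounded sets. Suppose $\gh$ is $\ell$-Lipschitz. The optimality condition \eqref{lochol:eq1} gives $\frac1\gamma J_p(v)\in\partial\gh(y)$, and every subgradient of an $\ell$-Lipschitz convex function has norm at most $\ell$. Hence $\|v\|^{p-1}\le\gamma\ell$, so $\|v\|\le(\gamma\ell)^{1/(p-1)}$ uniformly in $x$. For $p\ge2$, Lemma~\ref{lem:findlowbounknu:lemma}\ref{lem:findlowbounknu:lemma:e3} then applies with $M\le(\gamma\ell)^{1/(p-1)}$, which gives a global Lipschitz constant $\frac{b_p}{\gamma}(\gamma\ell)^{(p-2)/(p-1)}(1+L_p)$. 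The case $p=2$ is already covered by (a).
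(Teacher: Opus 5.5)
Your proof is correct. Parts (a)--(c) follow the paper's argument essentially verbatim. The paper also sets $v_i=x_i-\prox{\gh}{\gamma}{p}(x_i)$ and uses Theorem~\ref{th:home:lip} to get $\|v_2-v_1\|\le(1+L_p)\|x_2-x_1\|$. It then applies the global H\"older estimate for $J_p$ when $p\in(1,2]$. When $p>2$ it uses Lemma~\ref{lem:findlowbounknu:lemma}\ref{lem:findlowbounknu:lemma:e3}, with $\max\|v_i\|$ bounded on a ball centred at $0$ instead of your $x_0$. In the Lipschitz case it uses the subgradient bound $\|v\|^{p-1}\le\gamma\ell$, and it ends with the same constants.

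Where you genuinely differ is the gradient formula \eqref{eq:formulaofdiff}. The paper does not prove it; it combines single-valuedness of the proximal map with \cite[Proposition~20]{Kabganidiff}, a differentiability result for high-order envelopes in a broader, possibly nonconvex, setting. You instead give a self-contained two-sided estimate of $\fgam{\gh}{p}{\gamma}(x+d)-\fgam{\gh}{p}{\gamma}(x)$. The upper bound uses the fixed minimizer $y$. The lower bound uses $y_d$ together with the gradient inequality of the convex $\Psi_p$. You then close the gap via continuity of $J_p$ and of the proximal map.

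Your route makes explicit that, in the convex case, only smoothness and convexity of $\Psi_p$ and mere continuity of $\prox{\gh}{\gamma}{p}$ are needed; the Lipschitz property you invoke is more than the argument uses. This makes the theorem independent of the external reference. The paper's citation is shorter and ties the result to a framework that also handles nonconvex $\gh$.
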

\begin{proof}
Differentiability of $\fgam{\gh}{p}{\gamma}$ and the formula \eqref{eq:formulaofdiff} follow from Fact \ref{prop:horder:Bauschke17:p12.15} $\ref{prop:horder:Bauschke17:p12.15:unique}$ and \cite[Proposition 20]{Kabganidiff}.
Now, we prove the second part of the theorem. For $x_1, x_2\in  \R^n$, we set
$y_i=\prox{\gh}{\gamma}{p}(x_i)$ and $v_i:=x_i-y_i$, $i=1, 2$.
\\
\ref{th:locholof gra:th:p1,2} let $p\in(1,2]$.  We have
  \begin{align*}
 \left\Vert \nabla \fgam{\gh}{p}{\gamma}(x_2)-\nabla \fgam{\gh}{p}{\gamma}(x_1)\right\Vert =\frac{1}{\gamma} \left\Vert J_p(v_2)-J_p(v_1)\right\Vert
\leq \frac{2^{2-p}}{\gamma} \left\Vert (x_2-x_1) - (y_2-y_1)\right\Vert^{p-1},
  \end{align*}
  where the last inequality comes from \cite[Theorem 6.3 ]{Rodomanov2020}. 
Combining this with Theorem~\ref{th:home:lip} ensures
  \begin{align*}
\left\Vert \nabla \fgam{\gh}{p}{\gamma}(x_2)-\nabla \fgam{\gh}{p}{\gamma}(x_1)\right\Vert  
 &\leq \frac{2^{2-p}}{\gamma}\left(\Vert x_2-x_1\Vert + \Vert y_2-y_1\Vert\right)^{p-1}\\
 &\leq  \frac{2^{2-p}}{\gamma}\left(\Vert x_2-x_1\Vert+L_p  \Vert x_2-x_1\Vert\right)^{p-1}\leq \frac{2^{2-p}}{\gamma}\left(1+L_p \right)^{p-1} \Vert x_2-x_1\Vert^{p-1},
  \end{align*}
 which proves the claim.
\\
\ref{th:locholof gra:th:p>2}
Let $p>2$. From \eqref{eq:formulaofdiff}, Lemma \ref{lem:findlowbounknu:lemma} $\ref{lem:findlowbounknu:lemma:e3}$, and Theorem \ref{th:home:lip}, with setting
\begin{equation}\label{eq:M_max_form}
M:= \left(\max\{\Vert x_2-y_2\Vert, \Vert x_1-y_1\Vert\}\right)^{p-2},
\end{equation}
there exists $b_p>0$ such that
  \begin{align*}
\left\Vert \nabla \fgam{\gh}{p}{\gamma}(x_2)-\nabla \fgam{\gh}{p}{\gamma}(x_1)\right\Vert
 =\frac{1}{\gamma} \left\Vert J_p(v_2)-J_p(v_1)\right\Vert
  &\leq \frac{b_pM}{\gamma} \left\Vert (x_2-x_1) - (y_2-y_1)\right\Vert
 \\ &\leq  \frac{b_pM}{\gamma}\left(\Vert x_2-x_1\Vert +L_p\Vert x_2-x_1\Vert\right)
 =  \frac{b_pM}{\gamma}\left(1+L_p\right)\Vert x_2-x_1\Vert.
  \end{align*}
If $x_1, x_2\in\mb(0;r)$, by Theorem~\ref{th:home:lip}, for $i=1,2$,
\[
\Vert y_i\Vert\leq \Vert \prox{\gh}{\gamma}{p} (0)\Vert+L_p\Vert x_i\Vert\leq 
\Vert\prox{\gh}{\gamma}{p} (0)\Vert+L_pr.
\]
Setting $\tau:= \Vert \prox{\gh}{\gamma}{p} (0)\Vert+L_pr$,
$\Vert x_i-y_i\Vert\leq r+\tau$ for $i=1, 2$. Thus, from $p> 2$, $M\leq \left(r+\tau\right)^{p-2}$.
Therefore,
   \begin{align*}
\left\Vert \nabla \fgam{\gh}{p}{\gamma}(x_2)-\nabla \fgam{\gh}{p}{\gamma}(x_1)\right\Vert
\leq  \frac{b_p}{\gamma}(1+L_p)\left(r+\tau\right)^{p-2}\Vert x_2-x_1\Vert.
  \end{align*}
\ref{th:locholof gra:th:p>2:lip} Suppose that $\gh$ is Lipschitz continuous on $\mathbb R^n$ with Lipschitz constant $L$. By \eqref{lochol:eq1} and \cite[Proposition 2.1.2 (a)]{Clarke90},
$\Vert x_i-y_i\Vert\leq \left(\gamma L\right)^{\frac{1}{p-1}}$ for $i=1, 2$. Thus, for $M$ given in \eqref{eq:M_max_form}, $M\leq \left(\gamma L\right)^{\frac{p-2}{p-1}}$.
Therefore, following the proof of Assertion~\ref{th:locholof gra:th:p>2}, we get
\begin{align*} 
\left\Vert \nabla \fgam{\gh}{p}{\gamma}(x_2)-\nabla \fgam{\gh}{p}{\gamma}(x_1)\right\Vert
\leq  \frac{b_p}{\gamma}\left(\gamma L\right)^{\frac{p-2}{p-1}}\left(1+L_p\right)\Vert x_2-x_1\Vert,
  \end{align*}
which proves the claim.
\end{proof}

\begin{lemma}[Optimality condition for the high-order proximal point]\label{lem:optimality}
Let $p>1$, $\gamma>0$, and let $\gh:\R^n\to\Rinf$ be proper, lsc, and convex.
Then $x=\prox{\gh}{\gamma}{p}(s)$ if and only if
$0\in \partial \gh(x)+\frac{1}{\gamma} J_p(x-s)$.
\end{lemma}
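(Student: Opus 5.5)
The plan is to apply Fermat's rule for convex functions to the proximal objective. Define
\[
\Phi_s(y):=\gh(y)+\frac{1}{p\gamma}\|y-s\|^p .
\]
By Definition~\ref{def:hope-home} and Fact~\ref{prop:horder:Bauschke17:p12.15}~\ref{prop:horder:Bauschke17:p12.15:unique}, $\prox{\gh}{\gamma}{p}(s)$ is the unique minimizer of $\Phi_s$. Hence $x=\prox{\gh}{\gamma}{p}(s)$ holds if and only if $x$ minimizes $\Phi_s$. Since $\Phi_s$ is proper, lsc and convex, this is equivalent to $0\in\partial\Phi_s(x)$, where $\partial$ is the convex subdifferential. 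Both directions of Fermat's rule are immediate from the subgradient inequality, so no constraint qualification is needed at this step.

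The remaining step is to compute $\partial\Phi_s(x)$ via the sum rule. The second summand is $y\mapsto \frac1\gamma\Psi_p(y-s)$, which is convex, real-valued and continuously differentiable on all of $\R^n$. Its gradient is $\frac1\gamma J_p(y-s)$, with the convention $J_p(0)=0$; at $y=s$ this is consistent because $p>1$. Because this summand is finite and continuous everywhere, the Moreau--Rockafellar sum rule applies with no qualification condition. It gives
\[
\partial\Phi_s(x)=\partial\gh(x)+\frac1\gamma J_p(x-s)
\]
for every $x\in\R^n$; both sides are empty when $x\notin\dom{\gh}$. If one prefers to avoid the general sum rule, there is a direct alternative. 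Differentiability of the smooth part lets one check that $v\in\partial\Phi_s(x)$ iff $v-\frac1\gamma J_p(x-s)\in\partial\gh(x)$, using directional derivatives along segments together with convexity.

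Combining the two steps yields the claim. There is no real obstacle here. The only point needing care is the differentiability of $\frac1p\|\cdot\|^p$ at the origin with gradient $0$, which is what makes the rule valid when $x=s$. This holds because $\|z\|^p/\|z\|\to0$ as $z\to0$ for $p>1$.
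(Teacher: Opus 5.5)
Your proof is correct and matches the paper's route. The paper states this lemma without proof and treats it as the standard optimality condition: Fermat's rule for the convex objective $\gh+\frac{1}{p\gamma}\|\cdot-s\|^p$ together with the sum rule. It invokes this as ``the optimality condition for the high-order proximal subproblem'' in the proof of Theorem~\ref{th:home:lip}. Your argument supplies exactly that justification, including the correct handling of differentiability at $x=s$ and of uniqueness, which the ``if'' direction needs.
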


%%% End Section: Preliminaries and notation %%%%%%%%%%%%%%%%%%%%%%%%%%%%%%%%%%%%%%%%%%%%%%%%%%%%%%%%%%%%%%%%%%%%%%%%%%%%%%%%%%%%%%%%%%%%%%%%%%%%%%%
%%%%%%%%%%%%%%%%%%%%%%%%%%%%%%%%%%%%%%%%%%%%%%%%%%%%%%%%%%%%%%%%%%%%%%%%%%%%%%%%%%%%%%%%%%%%%%%%%%%%%%%%%%%%%%%%%%%%%%%%%%%%%%%%%%%%%%%%%%%%%%%%%%%%

%%%%%%%%%%%%%%%%%%%%%%%%%%%%%%%%%%%%%%%%%%%%%%%%%%%%%%%%%%%%%%%%%%%%%%%%%%%%%%%%%%%%%%%%%%%%%%%%%%%%%%%%%%%%%%%%%%%%%%%%%%%%%%%%%%%%%%%%%%%%%%%%%%%%
%%% Section: difference of high-order Moreau envelopes %%%%%%%%%%%%%%%%%%%%%%%%%%%%%%%%%%%%%%%%%%%%%%%%%%%%%%%%%%%%%%%%%%%%%%%%%%%%%%%%%%%%%%%%%%%%%%%%%
\section{HOME-DC: Difference of High-Order Moreau Envelopes }\label{sec:structure}

In this section, we introduce the \textit{difference of high-order Moreau envelopes (HOME-DC)} for a fixed DC decomposition and study its basic structural properties. These properties are used later in the design and analysis of inexact descent algorithms.

\begin{definition}[Difference of high-order Moreau envelopes (HOME-DC)]\label{def:high-order-dce}
Let $g$ and $h$ satisfy Assumption~\ref{ass:basic}~\ref{ass:basic:a}, and let $\gf=g-h$ be the associated DC decomposition on $D$. For $p>1$ and $\gamma>0$,
the \textit{difference of high-order Moreau envelopes (HOME-DC)} is the function $ \fgam{\gf}{p}{\gamma}:\R^n\to\R$ defined by
\begin{equation}\label{eq:high-order-dce}
    \fgam{\gf}{p}{\gamma}(s):=\fgam{g}{p}{\gamma}(s)-\fgam{h}{p}{\gamma}(s).
\end{equation}
By Fact~\ref{prop:horder:Bauschke17:p12.15}, both component envelopes are finite-valued. Hence, $\fgam{\gf}{p}{\gamma}$ is well-defined on $\R^n$.
\end{definition}
Although we use the notation $\fgam{\gf}{p}{\gamma}$ for the HOME-DC, it should not be confused with the high-order Moreau envelope of the DC function $\gf$ itself as defined in \eqref{eq:Hiorder-Moreau env}. In the present notation,
$\fgam{\gf}{p}{\gamma}$ denotes the difference of the two componentwise high-order Moreau envelopes associated with the chosen DC decomposition $\gf=g-h$. Thus, the construction depends on the decomposition and is generally different from applying the high-order Moreau-envelope operator directly to $\gf$.

The following proposition is an immediate consequence of Theorem~\ref{th:locholof gra:th}.
\begin{proposition}[Gradient formula and regularity of HOME-DC]
\label{prop:HOME-DC-gradient-regularity}
Let Assumption~\ref{ass:basic}~\ref{ass:basic:a} hold, $p>1$, and $\gamma>0$. Then $\fgam{\gf}{p}{\gamma}$ is continuously differentiable on $\mathbb R^n$ and
\begin{equation}\label{eq:formulaHiDCdiff}
\nabla \fgam{\gf}{p}{\gamma}(s) =  \frac{1}{\gamma} \left[ J_p\bigl(s-v(s)\bigr)-J_p\bigl(s-u(s)\bigr) \right],
\end{equation}
where $u(s):=\prox{h}{\gamma}{p}(s)$ and $v(s):=\prox{g}{\gamma}{p}(s)$.
Moreover, the following regularity properties hold.
\begin{enumerate}[label=(\textbf{\alph*}), font=\normalfont\bfseries, leftmargin=0.7cm]
\item If $p\in (1,2)$, then $\nabla\fgam{\gf}{p}{\gamma}$ is globally H\"older continuous with exponent
$p-1$.
\item If $p=2$, then $\nabla\fgam{\gf}{2}{\gamma}$ is globally Lipschitz continuous.
\item If $p>2$, then $\nabla\fgam{\gf}{p}{\gamma}$ is Lipschitz continuous on every bounded
subset of $\R^n$.
\item If both $g$ and $h$ are finite-valued and Lipschitz continuous on $\mathbb R^n$, then for every $p\ge2$, $\nabla\fgam{\gf}{p}{\gamma}$ is globally Lipschitz continuous on $\mathbb R^n$.
\end{enumerate}
\end{proposition}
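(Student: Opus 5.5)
The plan is to apply Theorem~\ref{th:locholof gra:th} to each convex component separately and then combine the results by linearity and the triangle inequality. By Assumption~\ref{ass:basic}~\ref{ass:basic:a}, both $g$ and $h$ are proper, lsc, and convex. Theorem~\ref{th:locholof gra:th} therefore shows that $\fgam{g}{p}{\gamma}$ and $\fgam{h}{p}{\gamma}$ are continuously differentiable on $\R^n$, with
\[
\nabla \fgam{g}{p}{\gamma}(s)=\frac1\gamma J_p\bigl(s-v(s)\bigr), \qquad \nabla \fgam{h}{p}{\gamma}(s)=\frac1\gamma J_p\bigl(s-u(s)\bigr).
\]
Both envelopes are real-valued by Fact~\ref{prop:horder:Bauschke17:p12.15}, so their difference $\fgam{\gf}{p}{\gamma}$ is a well-defined $\mathcal C^1$ function. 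Subtracting the two gradient formulas gives \eqref{eq:formulaHiDCdiff}.

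For the regularity assertions, I will start from
\[
\|\nabla\fgam{\gf}{p}{\gamma}(s_1)-\nabla\fgam{\gf}{p}{\gamma}(s_2)\|\le \|\nabla\fgam{g}{p}{\gamma}(s_1)-\nabla\fgam{g}{p}{\gamma}(s_2)\|+\|\nabla\fgam{h}{p}{\gamma}(s_1)-\nabla\fgam{h}{p}{\gamma}(s_2)\|
\]
and bound each term with the matching part of Theorem~\ref{th:locholof gra:th}.
\begin{itemize}
\item For $p\in(1,2)$, part \ref{th:locholof gra:th:p1,2} gives a H\"older bound with exponent $p-1$ and the same constant $\frac{2^{2-p}}{\gamma}(1+L_p)^{p-1}$ for both $g$ and $h$. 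The sum of the two bounds is again H\"older with exponent $p-1$, with constant twice as large.
\item For $p=2$, the same argument gives global Lipschitz continuity.
\item For $p>2$, part \ref{th:locholof gra:th:p>2} gives Lipschitz bounds on the ball $\mb(0;r)$, with constants that depend on $\|\prox{g}{\gamma}{p}(0)\|$ and $\|\prox{h}{\gamma}{p}(0)\|$. Any bounded set lies in such a ball, and the sum of the two constants is a Lipschitz constant for $\nabla\fgam{\gf}{p}{\gamma}$ there.
\item For the last item, $g$ and $h$ are finite-valued and globally Lipschitz, so part \ref{th:locholof gra:th:p>2:lip} applies to each of them for $p\ge 2$. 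Adding the resulting constants gives a global Lipschitz constant for $\nabla\fgam{\gf}{p}{\gamma}$.
\end{itemize}

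I do not expect a real obstacle; the proof is essentially bookkeeping. The one step that needs care is the $p>2$ local case: the ball radius $r$, and hence the constants, must be chosen the same for both $g$ and $h$ so that the two local estimates hold on the same set before they are added.
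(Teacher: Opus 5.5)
Your proposal is correct and follows the same route as the paper, which states this proposition as an immediate consequence of Theorem~\ref{th:locholof gra:th}. Like the paper, you apply that theorem to $g$ and $h$ separately and combine the two results by subtraction and the triangle inequality.
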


%%%%%%%%%%%%%%%%%%%%%%%%%%%%%%%%%%%%%%%%%%%%%%%
The next theorem compares HOME-DC with the original DC objective at the corresponding proximal points, whenever the relevant values of $\gf$ are well-defined.
\begin{theorem}[Two-sided comparison inequality]\label{thm:two-sided-comparison}
Let Assumption~\ref{ass:basic}~\ref{ass:basic:a} hold, $p>1$, and $\gamma>0$.
Let $s\in\R^n$, and define $u=\prox{h}{\gamma}{p}(s)$ and $v=\prox{g}{\gamma}{p}(s)$.
Then  if $v\in D$, then $\gf(v)\le\fgam{\gf}{p}{\gamma}(s)$, and if $u\in D$, then $\fgam{\gf}{p}{\gamma}(s)\le\gf(u)$. In particular, if $u,v\in D$, then 
\begin{equation}\label{eq:two-sided-comparison}
    \gf(v)\le\fgam{\gf}{p}{\gamma}(s)\le
    \gf(u).
\end{equation}
\end{theorem}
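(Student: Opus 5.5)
The plan is to compare each envelope with the value of its own objective at a well-chosen test point, using only the variational definition \eqref{eq:Hiorder-Moreau env} and the fact that the infima are attained (Fact~\ref{prop:horder:Bauschke17:p12.15}). The idea is to evaluate the envelope whose proximal point we know exactly, and to bound the other envelope from above by plugging in that same point.

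\emph{Lower bound.} Assume $v\in D$. Since $v=\prox{g}{\gamma}{p}(s)$ is the minimizer defining $\fgam{g}{p}{\gamma}(s)$, we have the exact identity
\[
\fgam{g}{p}{\gamma}(s)=g(v)+\tfrac{1}{p\gamma}\|s-v\|^p .
\]
For $h$ we use only the infimum and evaluate at the feasible point $y=v$:
\[
\fgam{h}{p}{\gamma}(s)\le h(v)+\tfrac{1}{p\gamma}\|s-v\|^p .
\]
Here $h(v)$ is finite because $v\in D$. Subtracting the two relations, the $p$-th power terms cancel, and we get $\fgam{\gf}{p}{\gamma}(s)\ge g(v)-h(v)=\gf(v)$.

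\emph{Upper bound.} Assume $u\in D$ and swap the roles. Since $u=\prox{h}{\gamma}{p}(s)$, we have
\[
\fgam{h}{p}{\gamma}(s)=h(u)+\tfrac{1}{p\gamma}\|s-u\|^p .
\]
For $g$, evaluating at $y=u$ gives
\[
\fgam{g}{p}{\gamma}(s)\le g(u)+\tfrac{1}{p\gamma}\|s-u\|^p ,
\]
and $g(u)$ is finite since $u\in D$. Subtracting yields $\fgam{\gf}{p}{\gamma}(s)\le g(u)-h(u)=\gf(u)$. Combining the two cases gives \eqref{eq:two-sided-comparison} when $u,v\in D$.

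The only delicate point is avoiding $\infty-\infty$ when subtracting. This is exactly why the hypotheses $v\in D$ and $u\in D$ are needed: they make $g$ and $h$ both finite at the test point. The envelopes themselves are real-valued by Fact~\ref{prop:horder:Bauschke17:p12.15}, so every subtraction above is between finite numbers, and no further obstacle is expected.
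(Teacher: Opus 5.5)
Your proof is correct and is essentially the paper's argument: the paper expands $\fgam{\gf}{p}{\gamma}(s)=g(v)-h(u)+\frac{1}{p\gamma}\bigl(\|v-s\|^p-\|u-s\|^p\bigr)$ and then applies the optimality inequality $h(u)+\frac{1}{p\gamma}\|u-s\|^p\le h(v)+\frac{1}{p\gamma}\|v-s\|^p$ (and its analogue for $g$), which is exactly your step of bounding one envelope by evaluating it at the other proximal point. Your direct subtraction is a slightly cleaner organization of the same computation, and it shows more transparently why the hypotheses $v\in D$ and $u\in D$ are needed.
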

\begin{proof}
By the definition of the component envelopes and the optimality of $u$ and $v$, we have
\begin{equation}\label{eq:dce-expansion}
    \fgam{\gf}{p}{\gamma}(s)=\fgam{g}{p}{\gamma}(s)-\fgam{h}{p}{\gamma}(s)=g(v)-h(u)+\frac1{p\gamma}\bigl(\norm{v-s}^p-\norm{u-s}^p\bigr).
\end{equation}
Using $g(v)-h(u)=\gf(v)+\bigl(h(v)-h(u)\bigr)$,
it follows that
\begin{equation}\label{eq:phiGammaP}
    \fgam{\gf}{p}{\gamma}(s)=\gf(v)+\bigl(h(v)-h(u)\bigr)
    +\frac1{p\gamma}\bigl(\norm{v-s}^p-\norm{u-s}^p\bigr).
\end{equation}
In addition, it follows from $h(u)+\frac1{p\gamma}\norm{u-s}^p\le h(v)+\frac1{p\gamma}\norm{v-s}^p$ that
\[
    h(v)-h(u)\ge-\frac1{p\gamma}\bigl(\norm{v-s}^p-\norm{u-s}^p\bigr).
\]
Substituting this into \eqref{eq:phiGammaP} ensures $\fgam{\gf}{p}{\gamma}(s)\ge\gf(v)$.
For the upper bound, we rewrite \eqref{eq:dce-expansion} as
\begin{equation}\label{eq:phiGammaP1}
    \fgam{\gf}{p}{\gamma}(s) = \gf(u) + \bigl(g(v)-g(u)\bigr) +
    \frac{1}{p\gamma}\bigl(\norm{v-s}^p-\norm{u-s}^p\bigr).
\end{equation}
It follows from the definition of $v$ that $g(v)+\frac1{p\gamma}\norm{v-s}^p\le  g(u)+\frac1{p\gamma}\norm{u-s}^p$, i.e.,
\[
    g(v)-g(u)\le-\frac1{p\gamma}\bigl(\norm{v-s}^p-\norm{u-s}^p\bigr).
\]
Substituting this into \eqref{eq:phiGammaP1} yields  $\fgam{\gf}{p}{\gamma}(s)\le \gf(u)$, which proves \eqref{eq:two-sided-comparison}. 
\end{proof}

\begin{remark}\label{rem:domain}
The domain qualifications in Theorem~\ref{thm:two-sided-comparison} are needed only when $g$ and $h$ are extended-valued. In that case, the lower bound should be used only when $v(s)\in D:=\dom g\cap\dom h$, and the upper bound should be used only when $u(s)\in D$. In particular,
if $ v(s)\in D$, then  $\gf(v(s))\le \fgam{\gf}{p}{\gamma}(s)$ and if $u(s)\in D$, then $\fgam{\gf}{p}{\gamma}(s)\le \gf(u(s))$.
Moreover, if $u(s)=v(s)=x\in D$, then $\fgam{\gf}{p}{\gamma}(s)=\gf(x)$. 
\end{remark}

%%%%%%%%%%%%%%%%%%%%%%%%%%%%%%%%%%%%%%%%%%%%%%%%%%%%%%%%%%%%%%%%%%%%%%%%%%%%%%%%%%%%%%%%%%%%%%%%%
\subsection{{\bf On Critical Points of HOME-DC}} \label{subsec:critical-point-type}

We next clarify the relation between critical points of HOME-DC and stationary points of the original DC objective. 

\begin{definition}[DC-stationary point]\label{def:dc-stationary}
A point $\ov x\in \dom g\cap \dom h$ is called \textit{DC-stationary} for \eqref{eq:dc-problem}
if
\begin{equation}\label{eq:dc-stationary}
    \partial g(\ov x)\cap \partial h(\ov x)\neq \emptyset.
\end{equation}
The set of DC-stationary points is denoted by $\stat(\gf)$.
\end{definition}
%%%%%%%%%%
Let us define
\[
    \crit(\fgam{\gf}{p}{\gamma}):= \{s\in\R^n:\nabla\fgam{\gf}{p}{\gamma}(s)=0\}.
\]
We now show that DC-stationarity can be expressed by the coincidence of two high-order proximal
points.
\begin{theorem}[Characterization of DC-stationarity]\label{thm:dc-stationary-representation}
Let Assumption~\ref{ass:basic}~\ref{ass:basic:a} hold and $p>1$.
Let $\ov x\in \dom g\cap\dom h$. The following are equivalent:
\begin{enumerate}[label=(\textbf{\alph*}), font=\normalfont\bfseries, leftmargin=0.7cm]
    \item\label{thm:dc-stationary-representation:a} $\ov x\in \stat(\gf)$;
    \item\label{thm:dc-stationary-representation:b} There exist $\gamma>0$ and $\ov s\in\R^n$ such that $\ov x=\prox{g}{\gamma}{p}(\ov s)=\prox{h}{\gamma}{p}(\ov s)$;
    \item\label{thm:dc-stationary-representation:c} For every $\gamma>0$, there exists $\ov s\in\R^n$ such that $\ov x=\prox{g}{\gamma}{p}(\ov s)=\prox{h}{\gamma}{p}(\ov s)$.
\end{enumerate}
\end{theorem}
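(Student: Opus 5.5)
The plan is to prove the cycle \ref{thm:dc-stationary-representation:a}$\Rightarrow$\ref{thm:dc-stationary-representation:c}$\Rightarrow$\ref{thm:dc-stationary-representation:b}$\Rightarrow$\ref{thm:dc-stationary-representation:a}. The only tool needed is Lemma~\ref{lem:optimality}: for a proper lsc convex $\gh$, we have $x=\prox{\gh}{\gamma}{p}(s)$ if and only if $\frac1\gamma J_p(s-x)\in\partial\gh(x)$. This uses $J_p(x-s)=-J_p(s-x)$, which is immediate from the oddness of $J_p$.

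For \ref{thm:dc-stationary-representation:a}$\Rightarrow$\ref{thm:dc-stationary-representation:c}, fix $\gamma>0$ and pick $\xi\in\partial g(\ov x)\cap\partial h(\ov x)$. We need $\ov s$ with $\frac1\gamma J_p(\ov s-\ov x)=\xi$. Since $J_q$ inverts $J_p$, we set
\[
\ov s:=\ov x+J_q(\gamma\xi).
\]
Then $J_p(\ov s-\ov x)=J_p(J_q(\gamma\xi))=\gamma\xi$. Hence $\frac1\gamma J_p(\ov s-\ov x)=\xi$ belongs to both $\partial g(\ov x)$ and $\partial h(\ov x)$. Lemma~\ref{lem:optimality}, applied once with $\gh=g$ and once with $\gh=h$, then gives $\ov x=\prox{g}{\gamma}{p}(\ov s)=\prox{h}{\gamma}{p}(\ov s)$. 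Single-valuedness of these operators (Fact~\ref{prop:horder:Bauschke17:p12.15}) makes the equalities meaningful.

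The step \ref{thm:dc-stationary-representation:c}$\Rightarrow$\ref{thm:dc-stationary-representation:b} is trivial: take any $\gamma>0$.

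For \ref{thm:dc-stationary-representation:b}$\Rightarrow$\ref{thm:dc-stationary-representation:a}, apply Lemma~\ref{lem:optimality} in the reverse direction to both $g$ and $h$ at the given $\ov s$ and $\gamma$. This yields that the single vector $\frac1\gamma J_p(\ov s-\ov x)$ lies in $\partial g(\ov x)$ and in $\partial h(\ov x)$. So $\partial g(\ov x)\cap\partial h(\ov x)\neq\emptyset$, and since $\ov x\in\dom g\cap\dom h$ by hypothesis, $\ov x\in\stat(\gf)$.

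There is no real obstacle here. The only point needing care is the construction of $\ov s$ in the first implication, namely inverting $J_p$ via $J_q$, including the degenerate case $\xi=0$. In that case $J_q(0)=0$, so $\ov s=\ov x$, and the argument goes through unchanged.
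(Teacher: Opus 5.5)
Your proposal is correct and follows the paper's proof essentially verbatim: the same cycle (a)$\Rightarrow$(c)$\Rightarrow$(b)$\Rightarrow$(a), the same appeal to the optimality lemma for both $g$ and $h$, and the same lifted center. Your choice $\ov s=\ov x+J_q(\gamma\xi)$ coincides with the paper's $\ov s=\ov x+\gamma^{q-1}J_q(\xi)$, since $J_q$ is positively homogeneous of degree $q-1$.
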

\begin{proof}
\ref{thm:dc-stationary-representation:a}$\Rightarrow$ \ref{thm:dc-stationary-representation:c}:
Assume that $\ov x$ is DC-stationary. Then there exists
$\xi\in \partial g(\ov x)\cap \partial h(\ov x)$.
Fix $\gamma>0$, let $q:=p/(p-1)$, and define
\begin{equation}\label{eq:s-from-x-xi}
    \ov s:=\ov x+\gamma^{q-1}J_q(\xi).
\end{equation}
Since $J_p$ and $J_q$ are inverse maps, we obtain 
$J_p(\ov s-\ov x)=J_p(\gamma^{q-1}J_q(\xi))=(\gamma^{q-1})^{p-1}\xi=\gamma \xi$.
Equivalently, $-\frac{1}{\gamma} J_p(\ov x-\ov s)=\xi$.
Because $\xi\in \partial g(\ov x)$, Lemma~\ref{lem:optimality} implies
$\ov x=\prox{g}{\gamma}{p}(\ov s)$.
The same argument for $h$ yields $\ov x=\prox{h}{\gamma}{p}(\ov s)$.
\\
\ref{thm:dc-stationary-representation:c}$\Rightarrow$ \ref{thm:dc-stationary-representation:b}:
It is immediate.
\\
\ref{thm:dc-stationary-representation:b}$\Rightarrow$ \ref{thm:dc-stationary-representation:a}:
Assume that there exist $\gamma>0$ and $\ov s\in\R^n$ such that
$\ov x=\prox{g}{\gamma}{p}(\ov s)=\prox{h}{\gamma}{p}(\ov s)$.
Invoking Lemma~\ref{lem:optimality} implies
$-\frac{1}{\gamma} J_p(\ov x-\ov s)\in \partial g(\ov x)\cap \partial h(\ov x)$.
Hence, $\partial g(\ov x)\cap \partial h(\ov x)\neq\emptyset$, i.e., $\ov x$ is DC-stationary.
\end{proof}

\begin{theorem}[Critical points of $\fgam{\gf}{p}{\gamma}$ and DC-stationarity]\label{thm:crit-points}
Let Assumption~\ref{ass:basic}~\ref{ass:basic:a} hold, $p>1$, $\gamma>0$, and let $s\in \R^n$.
Set $u:=\prox{h}{\gamma}{p}(s)$ and $v:=\prox{g}{\gamma}{p}(s)$.
Then the following statements are equivalent:
\begin{enumerate}[label=(\textbf{\alph*}), font=\normalfont\bfseries, leftmargin=0.7cm]
    \item\label{thm:crit-points:a} $s\in \crit(\fgam{\gf}{p}{\gamma})$;
    \item\label{thm:crit-points:b} $u=v$;
\end{enumerate}
In this case, with $x:=u=v$ and $\xi:=\frac{1}{\gamma}J_p(s-x)$, one has $ \xi\in\partial g(x)\cap\partial h(x)$, $x\in\stat(\gf)$, and
$\fgam{\gf}{p}{\gamma}(s)=\gf(x)$.
\end{theorem}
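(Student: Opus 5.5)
The argument rests on the gradient formula \eqref{eq:formulaHiDCdiff} of Proposition~\ref{prop:HOME-DC-gradient-regularity} together with the injectivity of $J_p$. By that formula, $s\in\crit(\fgam{\gf}{p}{\gamma})$ holds exactly when $J_p(s-v)=J_p(s-u)$. Since $J_q$ is the inverse of $J_p$, applying $J_q$ to both sides shows this is equivalent to $s-v=s-u$, that is, $u=v$. This settles \ref{thm:crit-points:a}$\Leftrightarrow$\ref{thm:crit-points:b} in both directions; the reverse direction is immediate, because $u=v$ makes the bracket in \eqref{eq:formulaHiDCdiff} vanish.

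For the supplementary claims, set $x:=u=v$. Lemma~\ref{lem:optimality}, applied to $g$ with $v=\prox{g}{\gamma}{p}(s)$, gives $-\frac1\gamma J_p(x-s)\in\partial g(x)$. Because $J_p$ is odd, $-J_p(x-s)=J_p(s-x)$, so $\xi=\frac1\gamma J_p(s-x)\in\partial g(x)$. The same argument applied to $h$ with $u$ yields $\xi\in\partial h(x)$.

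To conclude that $x\in\stat(\gf)$, we also need $x\in\dom g\cap\dom h$. This follows because the subdifferential of a proper convex function is empty outside its domain, so $\partial g(x)\neq\emptyset$ forces $x\in\dom g$, and likewise $x\in\dom h$. Hence $x\in D$, and Definition~\ref{def:dc-stationary} applies. Alternatively, the implication \ref{thm:dc-stationary-representation:b}$\Rightarrow$\ref{thm:dc-stationary-representation:a} of Theorem~\ref{thm:dc-stationary-representation} can be invoked once $x\in D$ is known.

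Finally, the identity $\fgam{\gf}{p}{\gamma}(s)=\gf(x)$ follows from \eqref{eq:dce-expansion} with $u=v=x$. The power terms cancel, leaving $g(x)-h(x)=\gf(x)$; this is exactly the case recorded in Remark~\ref{rem:domain}.

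The only delicate point is the domain membership $x\in D$, which the nonemptiness of the subdifferentials handles. The rest is direct substitution.
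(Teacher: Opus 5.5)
Your proposal is correct and follows essentially the same route as the paper: the gradient formula \eqref{eq:formulaHiDCdiff} with injectivity of $J_p$ gives the equivalence, the proximal optimality conditions give $\xi\in\partial g(x)\cap\partial h(x)$, and the expansion behind Theorem~\ref{thm:two-sided-comparison} gives the value identity. Your explicit check that $x\in D$, via nonemptiness of the subdifferentials, fills in a detail the paper leaves implicit.
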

\begin{proof}
By \eqref{eq:formulaHiDCdiff}, it follows that
\[
\nabla \fgam{\gf}{p}{\gamma}(s)=\nabla \fgam{g}{p}{\gamma}(s)-\nabla \fgam{h}{p}{\gamma}(s)
 =\frac{1}{\gamma}\Bigl(J_p(s-v)-J_p(s-u)\Bigr),
\]
i.e., $\nabla \fgam{\gf}{p}{\gamma}(s)=0$ if and only if
$J_p(s-v)=J_p(s-u)$. Since $J_p$ is injective for every $p>1$, this is equivalent to $u=v$.
If $x:=u=v$, then the optimality conditions for the two convex proximal subproblems give $\xi\in\partial g(x)\cap\partial h(x)$,
and hence $x\in\stat(\gf)$.  Finally, by Theorem~\ref{thm:two-sided-comparison},
$\fgam{\gf}{p}{\gamma}(s)=\gf(x)$.
\end{proof}
%%%%%%%%%
For $x\in\stat(\gf)$, by setting $q:=\frac{p}{p-1}$, we define the \textit{lifted critical set} by
\[
\mathcal L_{\gamma}^p(x):= \left\{x+\gamma^{q-1}J_q(\xi): \xi\in\partial g(x)\cap\partial h(x)\right\}.
\]

\begin{theorem}[Lifted critical set representation]\label{thm:critical-lift-description}
Let Assumption~\ref{ass:basic}~\ref{ass:basic:a} hold, $p>1$, and $\gamma>0$. Then
\[
\crit(\fgam{\gf}{p}{\gamma})= \bigcup_{x\in\stat(\gf)} \mathcal L_{\gamma}^p(x).
\]
\end{theorem}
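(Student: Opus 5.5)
We prove the set equality by double inclusion, using Theorem~\ref{thm:crit-points} for one direction and the construction from the proof of Theorem~\ref{thm:dc-stationary-representation} for the other. Throughout, $q:=p/(p-1)$, and we use the identities $J_q\circ J_p=\mathrm{id}$ and $J_q(\alpha y)=\alpha^{q-1}J_q(y)$ for $\alpha\ge 0$ (the latter is the $q$-analogue of the scaling rule recorded in Section~\ref{sec:prelim}).

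\emph{Inclusion ``$\subseteq$''.} Let $s\in\crit(\fgam{\gf}{p}{\gamma})$ and set $u:=\prox{h}{\gamma}{p}(s)$, $v:=\prox{g}{\gamma}{p}(s)$. By Theorem~\ref{thm:crit-points}, $u=v=:x$ with $x\in\stat(\gf)$. The same theorem gives $\xi:=\frac1\gamma J_p(s-x)\in\partial g(x)\cap\partial h(x)$. It then suffices to invert this relation for $s$. From $J_p(s-x)=\gamma\xi$, applying $J_q$ yields
\[
s-x=J_q(\gamma\xi)=\gamma^{q-1}J_q(\xi).
\]
Hence $s=x+\gamma^{q-1}J_q(\xi)\in\mathcal L^p_\gamma(x)$.

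\emph{Inclusion ``$\supseteq$''.} Let $x\in\stat(\gf)$ and $s=x+\gamma^{q-1}J_q(\xi)$ with $\xi\in\partial g(x)\cap\partial h(x)$. This is exactly the point $\ov s$ of \eqref{eq:s-from-x-xi} in the proof of Theorem~\ref{thm:dc-stationary-representation}, (a)$\Rightarrow$(c). That argument computes $-\frac1\gamma J_p(x-s)=\xi$ and then applies Lemma~\ref{lem:optimality} to both $g$ and $h$. This yields $x=\prox{g}{\gamma}{p}(s)=\prox{h}{\gamma}{p}(s)$, so $u=v$. Theorem~\ref{thm:crit-points}, (b)$\Rightarrow$(a), then gives $s\in\crit(\fgam{\gf}{p}{\gamma})$.

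No step is a real obstacle, since everything reduces to earlier results. The only point needing care is the exponent bookkeeping in $J_q(\gamma\xi)=\gamma^{q-1}J_q(\xi)$, which follows from the homogeneity $\|\alpha y\|^{q-2}\alpha y=\alpha^{q-1}\|y\|^{q-2}y$. One should also note that $J_p$ is odd, so $J_p(x-s)=-J_p(s-x)$.
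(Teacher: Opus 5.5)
Your proof is correct and follows essentially the same route as the paper. For $\subseteq$ you use Theorem~\ref{thm:crit-points} and invert $J_p(s-x)=\gamma\xi$ via $J_q$, exactly as the paper does. For $\supseteq$ you cite the construction in Theorem~\ref{thm:dc-stationary-representation} together with Theorem~\ref{thm:crit-points}, where the paper instead repeats the same optimality-condition computation inline.
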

\begin{proof}
Let $s\in\crit(\fgam{\gf}{p}{\gamma})$ and set $x:=u=v$ as introduced in Theorem~\ref{thm:crit-points}. The optimality conditions for the two proximal subproblems give
$\xi:=\frac{1}{\gamma} J_p(s-x)\in\partial g(x)\cap \partial h(x)$.
We have
\[
    s-x=J_q(\gamma\xi)=\gamma^{q-1}J_q(\xi),
\]
i.e., $s\in\mathcal L_{\gamma}^p(x)$. Conversely, take $\xi\in\partial g(x)\cap\partial h(x)$. Define $s:=x+\gamma^{q-1}J_q(\xi)$. Then
\[
    \frac{1}{\gamma} J_p(s-x)= \frac{1}{\gamma} J_p(\gamma^{q-1}J_q(\xi))=\xi \in\partial g(x)\cap\partial h(x).
\]
Equivalently,
\[
    0\in\partial g(x)+\frac{1}{\gamma} J_p(x-s),
    \quad
    0\in\partial h(x)+\frac{1}{\gamma} J_p(x-s).
\]
Hence $x=\prox{g}{\gamma}{p}(s)=\prox{h}{\gamma}{p}(s)$. Thus $\nabla\fgam{\gf}{p}{\gamma}(s)=0$.
\end{proof}

%%%%%%%%%%%%
The lift/projection mechanism described in Theorem~\ref{thm:crit-points} and Theorem~\ref{thm:critical-lift-description} is summarized schematically in Figure~\ref{fig:critical-lift}. Starting from a DC-stationary point $x$ of the original objective, each common subgradient $\xi\in\partial g(x)\cap\partial h(x)$ generates a lifted center
$s=x+\gamma^{q-1}J_q(\xi)$, which is a critical point of the HOME-DC function. In contrast, every critical point of the envelope is projected back to a
DC-stationary point of the original problem through the common proximal point $x=u(s)=v(s)$.

\begin{figure}[h]
\centering
\begin{tikzpicture}[node distance=2.4cm, >=Latex]
\node[draw, rounded corners, align=center, minimum width=3.4cm, minimum height=1.1cm] (stat)
{$x\in\stat(\gf)$\\ original variable};
\node[draw, rounded corners, align=center, minimum width=4.2cm, minimum height=1.3cm, right=of stat] (lift)
{$s\in\mathcal L_{\gamma}^p(x)$\\ $s=x+\gamma^{q-1}J_q(\xi)$};
\node[draw, rounded corners, align=center, minimum width=3.8cm, minimum height=1.1cm, right=of lift] (crit)
{$s\in\crit\fgam{\gf}{p}{\gamma}$\\ envelope variable};
\draw[->, thick] (stat) -- node[above, align=center] {choose\\ $\xi\in\partial g(x)\cap\partial h(x)$} (lift);
\draw[->, thick] (lift) -- node[above] {critical lift} (crit);
\draw[->, thick, bend left=25] (crit.south) to node[below, align=center] {project by\\ $x=u(s)=v(s)$} (stat.south);
\end{tikzpicture}
\caption{ Schematic lift/projection relation between DC-stationary points of the
original objective and critical points of the HOME-DC function. A stationary
point $x$ may generate several lifted critical centers $s$ when
$\partial g(x)\cap\partial h(x)$ contains more than one vector.}
\label{fig:critical-lift}
\end{figure}
%%%%%%%%%
\begin{remark}
Theorem~\ref{thm:critical-lift-description} shows that, in general, 
$\crit(\fgam{\gf}{p}{\gamma})\neq\stat(\gf)$. Rather, the two sets are related through a lift/projection mechanism. The same stationary point $x$ of the original DC objective may generate multiple, or even infinitely many, envelope critical points whenever $\partial g(x)\cap\partial h(x)$ contains more than one vector. Even when this intersection is a singleton, the lifted critical point need not satisfy $s=x$ unless
the common subgradient is zero. Example~\ref{ex:one-stationary-continuum-critical} illustrates this phenomenon.
\end{remark}

\begin{example}[One stationary point lifting to a continuum of envelope critical points]
\label{ex:one-stationary-continuum-critical}
Define $g, h:\R\to \R$ by $g(x):=|x|$ and $h(x):=2|x|$. Then $\gf(x):=-|x|$. The original DC objective has a unique DC-stationary point at $x=0$. Indeed, for $x>0$, we have $\partial g(x)=\{1\}$ and $\partial h(x)=\{2\}$ and for $x<0$, we have $\partial g(x)=\{-1\}$ and $\partial h(x)=\{-2\}$. Thus
$\partial g(x)\cap\partial h(x)=\emptyset$ whenever $x\neq 0$. At $x=0$, however, $\partial g(0)=[-1,1]$ and $\partial h(0)=[-2,2]$ and hence
$\partial g(0)\cap\partial h(0)=[-1,1]$. Therefore $\stat(\gf)=\{0\}$. 
Now, let $p>1$ and let $q=p/(p-1)$. By Theorem~\ref{thm:critical-lift-description}, the lifted critical set generated by $x=0$ is
\[
\mathcal L_{\gamma}^p(0) = \left\{ \gamma^{q-1}J_q(\xi):\xi\in[-1,1]\right\}=[-\gamma^{q-1},\gamma^{q-1}].
\]
Consequently, $\crit(\fgam{\gf}{p}{\gamma})= [-\gamma^{q-1},\gamma^{q-1}]$. Thus, a single stationary point of the original DC objective gives rise to a whole
interval of critical points of the HOME-DC function.

In the special case $p=2$ and $\gamma=1$, this interval is simply $[-1,1]$.
Moreover, the corresponding envelope $\fgam{\gf}{2}{1}$ can be written explicitly as
\[
 \fgam{\gf}{2}{1}(s)  =
    \begin{cases}
        0, & |s|\le 1,\\[2mm]
        -\dfrac12(|s|-1)^2, & 1<|s|\le 2,\\[2mm]
        \dfrac32-|s|, & |s|\ge 2.
    \end{cases}
\]
Hence $\fgam{\gf}{2}{1}$ is flat on $[-1,1]$, and every point in this interval is a critical point of the envelope, although all project to the same stationary point $x=0$ of the original DC objective. This lift from a single stationary point $x=0$ to the whole  critical  interval
$[-1,1]$ of the envelope is illustrated in
Figure~\ref{fig:one-stationary-continuum-critical}.
\end{example}

\begin{figure}[t]
\centering
\begin{tikzpicture}
\begin{axis}[
    width=0.76\textwidth,
    height=0.38\textwidth,
    axis lines=middle,
    xlabel={horizontal variable},
    ylabel={function value},
    xmin=-2.5,
    xmax=2.5,
    ymin=-1,
    ymax=0.35,
    samples=300,
    domain=-3:3,
    xtick={-3,-2,-1,0,1,2,3},
    ytick={-1.5,-1,-0.5,0},
    legend style={
        at={(0.05,0.03)},
        anchor=south west,
        draw=none,
        fill=none
    },
]

\addplot[
    very thick,
    blue,
    domain=-3:-2
]
{1.5 + x};
\addlegendentry{$\fgam{\gf}{2}{1}$}

\addplot[
    very thick,
    blue,
    domain=-2:-1,
    forget plot
]
{-0.5*(-x-1)^2};

\addplot[
    very thick,
    blue,
    domain=-1:1,
    forget plot
]
{0};

\addplot[
    very thick,
    blue,
    domain=1:2,
    forget plot
]
{-0.5*(x-1)^2};

\addplot[
    very thick,
    blue,
    domain=2:3,
    forget plot
]
{1.5 - x};

\addplot[
    very thick,
    red,
    dashed,
    domain=-3:0
]
{x};
\addlegendentry{$\gf(x)=-|x|$}

\addplot[
    very thick,
    red,
    dashed,
    domain=0:3,
    forget plot
]
{-x};

\addplot[
    ultra thick,
    black,
    forget plot
]
coordinates {(-1,0) (1,0)};

\node[black, above] at (axis cs:0,0.08)
{$\crit(\fgam{\gf}{2}{1})=[-1,1]$};

\node[red, below] at (axis cs:0,-0.55)
{$\stat(\gf)=\{0\}$};

\end{axis}
\end{tikzpicture}
\caption{A single stationary point of the original DC function may lift to a continuum of critical points of the HOME-DC function. Here
$g(x)=|x|$, $h(x)=2|x|$, $p=2$, and $\gamma=1$. The original function $\gf(x)=-|x|$ is plotted in red, while the envelope $\fgam{\gf}{2}{1}$ is plotted in blue.
The original function has the unique DC-stationary point $x=0$, whereas the envelope has the whole interval $[-1,1]$ as its critical points.}
\label{fig:one-stationary-continuum-critical}
\end{figure}

%%%%%%%%%%%
\begin{theorem}[Local minima and local maxima] \label{thm:local-extrema-type}
Let $s^\star\in\crit(\fgam{\gf}{p}{\gamma})$ and set $x^\star:=u(s^\star)=v(s^\star)$,
where $u=\prox{h}{\gamma}{p}$ and $v=\prox{g}{\gamma}{p}$. Then $\fgam{\gf}{p}{\gamma}(s^\star)=\gf(x^\star)$. Moreover, the following statements hold:
\begin{enumerate}[label=(\textbf{\alph*}), font=\normalfont\bfseries, leftmargin=0.7cm]
\item\label{thm:local-extrema-type:a} If $x^\star$ is a local minimizer (resp. maximizer) of $\gf$, then $s^\star$ is a local minimizer (resp. maximizer) of $\fgam{\gf}{p}{\gamma}$.

% \item \label{thm:local-extrema-type:b} If $x^\star$ is a local maximizer of $\gf$, then $s^\star$ is a local maximizer of $\fgam{\gf}{p}{\gamma}$.

\item \label{thm:local-extrema-type:c}
If $s^\star$ is a local minimizer (resp. maximizer) of $\fgam{\gf}{p}{\gamma}$ and $u=\prox{h}{\gamma}{p}$ (resp. $v=\prox{g}{\gamma}{p}$) is locally surjective at $(s^\star,x^\star)$, then $x^\star$ is a local minimizer (resp. maximizer) of $\gf$.

\end{enumerate}
\end{theorem}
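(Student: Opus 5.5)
The plan is to use the two-sided comparison inequality of Theorem~\ref{thm:two-sided-comparison} together with the Lipschitz continuity of the proximal maps from Theorem~\ref{th:home:lip}. The identity $\fgam{\gf}{p}{\gamma}(s^\star)=\gf(x^\star)$ is already part of Theorem~\ref{thm:crit-points}: $u(s^\star)=v(s^\star)=x^\star$, and $x^\star\in\stat(\gf)\subseteq D$. This equality anchors everything, so each claim reduces to comparing $\fgam{\gf}{p}{\gamma}(s)$ with $\gf$ evaluated at $v(s)$ or $u(s)$.

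For \ref{thm:local-extrema-type:a}, let $x^\star$ be a local minimizer of $\gf$ on a ball $\mb(x^\star;r)$. Since $v=\prox{g}{\gamma}{p}$ is $L_p$-Lipschitz, for $s\in\mb(s^\star;r/L_p)$ we have $v(s)\in\mb(x^\star;r)$. Lemma~\ref{lem:optimality} also gives $v(s)\in\dom g$.

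The main obstacle is the domain qualification $v(s)\in D$ required by Theorem~\ref{thm:two-sided-comparison}, since $h$ may be extended-valued. I would settle it as follows. Because $\gf(x^\star)$ is finite and $x^\star$ is a local minimizer, no point $y$ of the ball with $g(y)<\infty$ can have $h(y)=+\infty$, as that would give $\gf(y)=-\infty$. Hence $v(s)\in D$, and
\[
\fgam{\gf}{p}{\gamma}(s)\ge\gf(v(s))\ge\gf(x^\star)=\fgam{\gf}{p}{\gamma}(s^\star).
\]
The maximizer case is symmetric with $u=\prox{h}{\gamma}{p}$ in place of $v$. Here $u(s)\in\dom h$, and $u(s)\in\dom g$ holds because otherwise the convention $\infty-\infty=\infty$ would give $\gf(u(s))=+\infty>\gf(x^\star)$, contradicting local maximality. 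Then the upper bound gives
\[
\fgam{\gf}{p}{\gamma}(s)\le\gf(u(s))\le\gf(x^\star)=\fgam{\gf}{p}{\gamma}(s^\star).
\]

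For \ref{thm:local-extrema-type:c}, assume $s^\star$ is a local minimizer of $\fgam{\gf}{p}{\gamma}$ on some neighborhood $W$. Local surjectivity of $u$ gives neighborhoods $U$ of $s^\star$ and $V$ of $x^\star$ with $V\subseteq u(U)$; I would shrink $U$ to $U\cap W$ and correspondingly take $V\subseteq u(U\cap W)$. Given $y\in V$, pick $s\in U$ with $u(s)=y$, so that $y\in\dom h$ automatically. If $y\notin\dom g$, then $\gf(y)=+\infty\ge\gf(x^\star)$ trivially. Otherwise $y\in D$, and the upper bound of Theorem~\ref{thm:two-sided-comparison} yields
\[
\gf(y)=\gf(u(s))\ge\fgam{\gf}{p}{\gamma}(s)\ge\fgam{\gf}{p}{\gamma}(s^\star)=\gf(x^\star).
\]
The maximizer case is analogous with $v$. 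Any $y=v(s)$ lies in $\dom g$. If $y\notin\dom h$, then $\gf(y)=-\infty\le\gf(x^\star)$. Otherwise
\[
\gf(y)=\gf(v(s))\le\fgam{\gf}{p}{\gamma}(s)\le\fgam{\gf}{p}{\gamma}(s^\star)=\gf(x^\star).
\]
Apart from the domain bookkeeping handled above, each step is a direct chaining of Theorems~\ref{thm:two-sided-comparison}, \ref{thm:crit-points}, and~\ref{th:home:lip}.
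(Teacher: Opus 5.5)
Your proposal is correct and follows the paper's argument: the value identity from criticality, then continuity of $v$ (resp.\ $u$) with the lower (resp.\ upper) bound of Theorem~\ref{thm:two-sided-comparison} for \ref{thm:local-extrema-type:a}, and local surjectivity with the opposite bound for \ref{thm:local-extrema-type:c}. Two details the paper passes over are supplied by you: the explicit radius $r/L_p$ from Theorem~\ref{th:home:lip}, and the domain bookkeeping (showing $v(s)\in D$, and disposing of points outside $D$ via $\gf=\pm\infty$).

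One caveat, which the paper's proof shares, concerns your step ``shrink $U$ to $U\cap W$ and correspondingly take $V\subseteq u(U\cap W)$''. It does not follow from the paper's stated definition of local surjectivity. That definition only guarantees a single pair $(U,V)$, and one may even take $U=\R^n$. You need the stronger reading that $u$ maps every neighborhood of $s^\star$ onto a neighborhood of $x^\star$, which is what Corollary~\ref{cor:smooth-h-local-surjectivity} actually delivers.

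Under the literal definition, \ref{thm:local-extrema-type:c} is false. In Example~\ref{ex:one-stationary-continuum-critical} with $p=2$ and $\gamma=1$, the point $s^\star=0$ is a local minimizer of the flat envelope, and $u=\prox{h}{1}{2}$ maps $\R$ onto $\R$. Yet $x^\star=0$ is a strict maximizer of $-|x|$, not a local minimizer. So you should state this openness reading explicitly.
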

\begin{proof}
By Theorem~\ref{thm:two-sided-comparison}, $\gf(v(s^\star))\le \fgam{\gf}{p}{\gamma}(s^\star)\le \gf(u(s^\star))$.
Since $s^\star$ is critical, $u(s^\star)=v(s^\star)=x^\star$. Therefore,
\[
    \fgam{\gf}{p}{\gamma}(s^\star)=\gf(x^\star).
\]
\ref{thm:local-extrema-type:a} 
If $x^\star$ is a local minimizer of $\gf$, then for $s$ sufficiently close to $s^\star$, the continuity of $v$ gives $v(s)$ close to $x^\star$, and hence by Theorem~\ref{thm:two-sided-comparison},
\[
    \fgam{\gf}{p}{\gamma}(s^\star) = \gf(x^\star)\le \gf(v(s))\le \fgam{\gf}{p}{\gamma}(s),
\]
i.e., $s^\star$ is a local minimizer of $\fgam{\gf}{p}{\gamma}$.
The proof for local maximizers is analogous.
\\
\ref{thm:local-extrema-type:c} 
Because $s^\star$ is a local minimizer of $\fgam{\gf}{p}{\gamma}$, there exists a neighborhood $U$ of $s^\star$ such that
\[
\fgam{\gf}{p}{\gamma}(s^\star)\le \fgam{\gf}{p}{\gamma}(s)\quad \forall s\in U.
\]
By local surjectivity of $u$, there exists a neighborhood $V$ of $x^\star$ such that $V\subseteq \prox{h}{\gamma}{p}(U)$. For $x\in V$, there exists $s\in U$ such that $x=u(s)$.
Using Theorem~\ref{thm:two-sided-comparison},
\[
\fgam{\gf}{p}{\gamma}(s)\le \gf(u(s))=\gf(x).
\]
Consequently, $\gf(x^\star)=\fgam{\gf}{p}{\gamma}(s^\star)\le \fgam{\gf}{p}{\gamma}(s)\le \gf(x)$, i.e., $\gf(x^\star)\le \gf(x)$ for all $x\in V$, which shows that $x^\star$ is a local minimizer of $\gf$. The proof for local maximizers is analogous.
\end{proof}

%%%%%%%%%%%
\begin{corollary}[Saddle points under local surjectivity]\label{cor:saddle-correspondence}
Let $s^\star\in\crit(\fgam{\gf}{p}{\gamma})$ and set $x^\star:=u(s^\star)=v(s^\star)$.
Assume that both $u=\prox{h}{\gamma}{p}$ and $v=\prox{g}{\gamma}{p}$ are locally surjective at $(s^\star,x^\star)$. If $x^\star$ is a DC-stationary point of $\gf$ that is neither a local minimizer nor a local maximizer, then $s^\star$ is a critical point of $\fgam{\gf}{p}{\gamma}$ that is neither a local minimizer nor a local maximizer.
\end{corollary}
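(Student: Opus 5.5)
The corollary follows from the contrapositive of Theorem~\ref{thm:local-extrema-type}~\ref{thm:local-extrema-type:c}, applied once for minimizers and once for maximizers. That $s^\star$ is critical is given, and Theorem~\ref{thm:crit-points} confirms $u(s^\star)=v(s^\star)=x^\star$ and $\fgam{\gf}{p}{\gamma}(s^\star)=\gf(x^\star)$. The DC-stationarity of $x^\star$ is automatic from Theorem~\ref{thm:crit-points} and plays no further role. It therefore remains to show that $s^\star$ is neither a local minimizer nor a local maximizer of $\fgam{\gf}{p}{\gamma}$.

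\emph{Not a local minimizer.} Suppose, for contradiction, that $s^\star$ is a local minimizer of $\fgam{\gf}{p}{\gamma}$. By assumption, $u=\prox{h}{\gamma}{p}$ is locally surjective at $(s^\star,x^\star)$. The ``minimizer'' version of Theorem~\ref{thm:local-extrema-type}~\ref{thm:local-extrema-type:c} then shows that $x^\star$ is a local minimizer of $\gf$. This contradicts the hypothesis on $x^\star$.

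\emph{Not a local maximizer.} Symmetrically, suppose $s^\star$ is a local maximizer of $\fgam{\gf}{p}{\gamma}$. We now use local surjectivity of $v=\prox{g}{\gamma}{p}$ at $(s^\star,x^\star)$ and the ``maximizer'' version of the same theorem. Its argument is the mirror image of the one written out in the paper. Each $x$ near $x^\star$ can be written as $x=v(s)$ with $s$ in the neighborhood where $s^\star$ is maximal. The lower bound in Theorem~\ref{thm:two-sided-comparison} then gives
\[
\gf(x)=\gf(v(s))\le\fgam{\gf}{p}{\gamma}(s)\le\fgam{\gf}{p}{\gamma}(s^\star)=\gf(x^\star).
\]
Hence $x^\star$ is a local maximizer of $\gf$, again a contradiction.

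The only point requiring care is the domain qualification in Theorem~\ref{thm:two-sided-comparison} (Remark~\ref{rem:domain}). The inequalities $\fgam{\gf}{p}{\gamma}(s)\le\gf(u(s))$ and $\gf(v(s))\le\fgam{\gf}{p}{\gamma}(s)$ are only needed where $\gf$ is finite. In the extended-valued case, ``local minimizer/maximizer of $\gf$'' is understood relative to $D$, and points of $V$ outside $D$ satisfy the desired inequality trivially or are excluded. I expect this bookkeeping to be the only mild obstacle. It is already implicit in the proof of Theorem~\ref{thm:local-extrema-type}, which we invoke as stated. Combining the two cases proves the corollary.
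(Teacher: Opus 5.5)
Your proposal is correct and matches the paper's proof. Both argue by contradiction through Theorem~\ref{thm:local-extrema-type}~\ref{thm:local-extrema-type:c}, using local surjectivity of $u$ to rule out a local minimizer and local surjectivity of $v$ to rule out a local maximizer. Your explicit maximizer chain and your domain remark are sound additions that the paper leaves implicit: since $u(s)\in\dom h$ and $v(s)\in\dom g$, points outside $D$ give $\gf=+\infty$ in the minimizer case and $\gf=-\infty$ in the maximizer case, so the needed inequalities hold trivially there.
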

\begin{proof}
If $s^\star$ were a local minimizer of $\fgam{\gf}{p}{\gamma}$, then Theorem~\ref{thm:local-extrema-type} would imply that $x^\star$ is a local minimizer of $\gf$, a contradiction. If $s^\star$ were a local maximizer of $\fgam{\gf}{p}{\gamma}$, then Theorem~\ref{thm:local-extrema-type} would imply that $x^\star$ is a local
maximizer of $\gf$, again a contradiction. Hence $s^\star$ is neither a local minimizer nor a local maximizer.
\end{proof}

%%%%%%
The preceding statements are summarized in Table~\ref{tab:critical-type-summary}. The table emphasizes that the correspondence is generally not an identity between two subsets of the same space. The envelope variable is the center $s$, whereas the original DC variable is the primal point $x=u(s)=v(s)$.

\begin{table}[h]
\centering
\caption{Correspondence between point types of the DC objective and the HOME-DC function.}
\label{tab:critical-type-summary}
\begin{tabular}{p{0.30\textwidth}p{0.35\textwidth}p{0.25\textwidth}}
\toprule
Object for $\gf=g-h$ & Corresponding object for $\fgam{\gf}{p}{\gamma}$ & Extra condition \\
\midrule
$x\in\stat(\gf)$ & every $s\in\mathcal L_{\gamma}^p(x)$ is critical for $\fgam{\gf}{p}{\gamma}$ & none \\
$x=u(s)=v(s)\in\stat(\gf)$ & $s\in\crit(\fgam{\gf}{p}{\gamma})$ & none \\
local minimizer $x$ & lifted critical point $s$ is a local minimizer & none \\
projected point $x=u(s)$ is a local minimizer & local minimizer $s$ & local surjectivity of $u$ \\
local maximizer $x$ & lifted critical point $s$ is a local maximizer & none \\
projected point $x=v(s)$ is a local maximizer & local maximizer $s$ & local surjectivity of $v$ \\
saddle-type stationary point $x$ & lifted critical point $s$ is saddle-type & local surjectivity of $u$ and $v$ \\
\bottomrule
\end{tabular}
\end{table}
%%%%%%%%%%
\begin{corollary}[A bounded-subgradient sufficient condition]
\label{cor:bounded-subgradient-local-min}
Let $s^\star$ be a local minimizer of $\fgam{\gf}{p}{\gamma}$. Suppose that there exists $r>0$ such that
\[
    \fgam{\gf}{p}{\gamma}(s^\star)\le \fgam{\gf}{p}{\gamma}(s),
    \quad \forall s\in\mb(s^\star;r).
\]
Set $x^\star:=u(s^\star)=v(s^\star)$ and assume that
\[
    \sup\{\|\xi\|:\xi\in\partial h(x),\ x\in\dom h\}\le M_h<+\infty.
\]
If $r>2\gamma^{q-1}M_h^{q-1}$, where $q:=p/(p-1)$, then $\gf(x^\star)\le \gf(x)$ for every $x$ satisfying
\[
    \|x-x^\star\|\le r-2\gamma^{q-1}M_h^{q-1}.
\]
In particular, $x^\star$ is a local minimizer of $\gf$.
\end{corollary}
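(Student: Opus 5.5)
The plan is to mimic the proof of Theorem~\ref{thm:local-extrema-type}\ref{thm:local-extrema-type:c}, replacing local surjectivity of $u$ by an explicit construction of a preimage. Given $x$ near $x^\star$, I will produce a center $s\in\mb(s^\star;r)$ with $u(s)=\prox{h}{\gamma}{p}(s)=x$. Then Theorem~\ref{thm:two-sided-comparison} yields
\[
\gf(x^\star)=\fgam{\gf}{p}{\gamma}(s^\star)\le \fgam{\gf}{p}{\gamma}(s)\le \gf(u(s))=\gf(x),
\]
where the first equality holds by Theorem~\ref{thm:local-extrema-type}, since a local minimizer is critical. Here I must assume $x\in D$, or else $\gf(x)=+\infty$ and the inequality is trivial. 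I should also check that $\partial h(x)\neq\emptyset$, which holds at least for $x\in\inter{\dom h}$; if $h$ is finite-valued, it holds everywhere.

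The construction comes from the lifting in Theorem~\ref{thm:dc-stationary-representation}. Pick $\eta\in\partial h(x)$ and set $s:=x+\gamma^{q-1}J_q(\eta)$. Exactly as in the proof of that theorem, $\frac1\gamma J_p(s-x)=\eta\in\partial h(x)$, so Lemma~\ref{lem:optimality} gives $x=u(s)$.

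For the distance estimate, recall from Theorem~\ref{thm:crit-points} and Theorem~\ref{thm:critical-lift-description} that $s^\star=x^\star+\gamma^{q-1}J_q(\xi^\star)$ with $\xi^\star\in\partial g(x^\star)\cap\partial h(x^\star)$. Hence
\[
\|s-s^\star\|\le \|x-x^\star\|+\gamma^{q-1}\bigl(\|J_q(\eta)\|+\|J_q(\xi^\star)\|\bigr).
\]
Since $\|J_q(z)\|=\|z\|^{q-1}$ and both $\eta,\xi^\star\in\partial h$ are bounded by $M_h$, this gives
\[
\|s-s^\star\|\le \|x-x^\star\|+2\gamma^{q-1}M_h^{q-1}.
\]
So $s\in\mb(s^\star;r)$ whenever $\|x-x^\star\|<r-2\gamma^{q-1}M_h^{q-1}$.

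The main obstacle is the boundary case $\|x-x^\star\|=r-2\gamma^{q-1}M_h^{q-1}$, since the ball is open. I would handle it by continuity or lsc: apply the strict case to $x_t:=x^\star+t(x-x^\star)$ for $t<1$ and let $t\to1$. This needs $\gf(x_t)\to\gf(x)$, which holds in the finite-valued case because convex functions on $\R^n$ are continuous. Alternatively I could simply state the result for the open ball.

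The remaining technical caveat is the existence of $\eta\in\partial h(x)$ for extended-valued $h$. I would note that the uniform bound $M_h$ is essentially a finite-valued, Lipschitz hypothesis on $h$, so $\dom h=\R^n$ and $\partial h(x)\neq\emptyset$ everywhere. The final claim that $x^\star$ is a local minimizer then follows at once, because $r-2\gamma^{q-1}M_h^{q-1}>0$.
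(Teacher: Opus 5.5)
Your proposal is correct and follows the paper's proof essentially step for step: you lift $x$ to $s_x=x+\gamma^{q-1}J_q(\xi_x)$ with $\xi_x\in\partial h(x)$ so that $\prox{h}{\gamma}{p}(s_x)=x$, bound $\|s_x-s^\star\|\le\|x-x^\star\|+2\gamma^{q-1}M_h^{q-1}$, and chain $\gf(x^\star)=\fgam{\gf}{p}{\gamma}(s^\star)\le\fgam{\gf}{p}{\gamma}(s_x)\le\gf(x)$ via Theorem~\ref{thm:two-sided-comparison}. The paper passes over both caveats you raise: the existence of $\xi_x$, which does hold because the uniform bound $M_h$ forces $\dom{h}=\R^n$, and the boundary case of the open ball; the boundary case is settled most simply by noting that continuity of $\fgam{\gf}{p}{\gamma}$ extends the minimality inequality from $\mb(s^\star;r)$ to the closed ball, so you need no limiting argument in $\gf$ and no continuity of $g$.
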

\begin{proof}
Since $s^\star$ is a local minimizer of $\fgam{\gf}{p}{\gamma}$, Theorem~\ref{thm:crit-points} gives $u(s^\star)=v(s^\star)=x^\star$ and 
$\fgam{\gf}{p}{\gamma}(s^\star)=\gf(x^\star)$. Let
\[
    \xi^\star:=-\frac{1}{\gamma} J_p(x^\star-s^\star)\in\partial h(x^\star),
\]
so that $s^\star=x^\star+\gamma^{q-1}J_q(\xi^\star)$. Take any admissible  $x\in\R^n$ with $\partial h(x)\neq\emptyset$ and choose $\xi_x\in\partial h(x)$. Define $s_x:=x+\gamma^{q-1}J_q(\xi_x)$.
Then, by Lemma~\ref{lem:optimality}, $u(s_x)=x$. Moreover,
\[
\|s_x-s^\star\|\le \|x-x^\star\|+\gamma^{q-1}\|J_q(\xi_x)-J_q(\xi^\star)\|.
\]
Since $\|\xi_x\|\le M_h$ and $\|\xi^\star\|\le M_h$,
\[
\|J_q(\xi_x)-J_q(\xi^\star)\| \le \|J_q(\xi_x)\|+\|J_q(\xi^\star)\| \le 2M_h^{q-1}.
\]
Thus
\[
    \|s_x-s^\star\| \le  \|x-x^\star\|+2\gamma^{q-1}M_h^{q-1}.
\]
If $\|x-x^\star\|\le r-2\gamma^{q-1}M_h^{q-1}$, then $s_x\in\mb(s^\star;r)$. Hence
\[
    \gf(x^\star)=\fgam{\gf}{p}{\gamma}(s^\star) \le \fgam{\gf}{p}{\gamma}(s_x) \le \gf(u(s_x))=\gf(x),
\]
which proves the claim.
\end{proof}
%%%%%%%%
\begin{corollary}[A smooth sufficient condition for local surjectivity]\label{cor:smooth-h-local-surjectivity}
Let $s^\star\in\crit\fgam{\gf}{p}{\gamma}$ and set $x^\star:=u(s^\star)=v(s^\star)$,
where $u=\prox{h}{\gamma}{p}$ and $v=\prox{g}{\gamma}{p}$.  Suppose additionally that $h$ is continuously differentiable in a neighborhood of
$x^\star$. Then $u=\prox{h}{\gamma}{p}$ is locally surjective at $(s^\star,x^\star)$.
\end{corollary}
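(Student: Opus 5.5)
We construct an explicit local right inverse of $u=\prox{h}{\gamma}{p}$ near $x^\star$, in the spirit of the lift used in Theorem~\ref{thm:critical-lift-description} and Corollary~\ref{cor:bounded-subgradient-local-min}. Since $s^\star$ is critical, Theorem~\ref{thm:crit-points} gives $\xi^\star:=\frac1\gamma J_p(s^\star-x^\star)\in\partial h(x^\star)$. Because $h$ is $\mathcal C^1$ near $x^\star$, this forces $\xi^\star=\nabla h(x^\star)$, and hence
\[
s^\star=x^\star+\gamma^{q-1}J_q(\nabla h(x^\star)),\qquad q:=p/(p-1).
\]

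Let $W$ be an open neighborhood of $x^\star$ on which $h$ is continuously differentiable. Define $\sigma:W\to\R^n$ by
\[
\sigma(x):=x+\gamma^{q-1}J_q(\nabla h(x)).
\]
For $x\in W$ we have $\partial h(x)=\{\nabla h(x)\}$. Exactly as in the converse part of the proof of Theorem~\ref{thm:critical-lift-description}, the identity $J_p(\gamma^{q-1}J_q(\xi))=\gamma\xi$ gives
\[
0\in\partial h(x)+\frac1\gamma J_p\bigl(x-\sigma(x)\bigr).
\]
Lemma~\ref{lem:optimality} then yields $u(\sigma(x))=x$ for all $x\in W$. Moreover $\sigma(x^\star)=s^\star$.

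It remains to show that $\sigma$ is continuous at $x^\star$. The map $\nabla h$ is continuous on $W$, and $J_q(y)=\|y\|^{q-2}y$ is continuous on all of $\R^n$ for every $q>1$, with $J_q(0)=0$. When $q<2$ this relies on $\|J_q(y)\|=\|y\|^{q-1}\to0$ as $y\to 0$; in general it follows from Remark~\ref{rem:Jp-regularity} and Lemma~\ref{lem:findlowbounknu:lemma} applied with $q$ in place of $p$. Hence $\sigma$ is continuous at $x^\star$.

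Now let $U$ be any neighborhood of $s^\star$. By continuity there is an open neighborhood $V\subseteq W$ of $x^\star$ with $\sigma(V)\subseteq U$. Every $x\in V$ then satisfies $x=u(\sigma(x))$ with $\sigma(x)\in U$, so $V\subseteq u(U)$. Taking, for instance, $U$ to be any ball around $s^\star$ shows that $u$ is locally surjective at $(s^\star,x^\star)$.

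The main point needing care is the identification $\xi^\star=\nabla h(x^\star)$ together with $\partial h(x)=\{\nabla h(x)\}$ on $W$. This requires $W\subseteq\inter{\dom h}$, which is implicit in "$h$ continuously differentiable in a neighborhood". It is the standard fact that a convex function differentiable at an interior point has singleton subdifferential equal to its gradient. Beyond this, continuity of $J_q$ at $0$ when $q<2$ is the only mildly delicate point, and it is immediate from the norm formula.
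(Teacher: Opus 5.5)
Your proposal is correct and follows the paper's own argument. The paper uses the same lift $S_h(x)=x+\gamma^{q-1}J_q(\nabla h(x))$, obtains $u(S_h(x))=x$ from the high-order proximal optimality condition, and concludes from continuity of $S_h$ and $S_h(x^\star)=s^\star$. You additionally justify $S_h(x^\star)=s^\star$ through $\partial h(x^\star)=\{\nabla h(x^\star)\}$, a step the paper leaves implicit.
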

\begin{proof}
Since $h$ is continuously differentiable in a neighborhood of $x^\star$, define, for $x$ in this neighborhood,
\[
    S_h(x):=x+\gamma^{q-1}J_q(\nabla h(x)).
\]
The map $S_h$ is continuous near $x^\star$. Moreover, for every $x$ sufficiently close to $x^\star$, the high-order proximal optimality condition gives
$x=\prox{h}{\gamma}{p}(S_h(x))$.
Since $S_h(x)\to S_h(x^\star)=s^\star$ as $x\to x^\star$, every point $x$ sufficiently
close to $x^\star$ has a preimage $S_h(x)$ arbitrarily close to $s^\star$. Hence
$\prox{h}{\gamma}{p}$ is locally surjective at $(s^\star,x^\star)$.
\end{proof}
%%%%%%%%%
The following elementary examples illustrate the phenomena in Theorems~\ref{thm:crit-points},~\ref{thm:critical-lift-description}, and~\ref{thm:local-extrema-type}. They
illustrate which local point properties are preserved by the envelope and which properties depend on the lifting map.
%%%%%%%%%%%%
\begin{example}[A minimizer need not occur at the same point]
\label{ex:minimizer-shift}
Let $g, h: \R\to\R$ be defined by $g(x):=x^2+x$ and $h(x):=x$. Then $\gf(x)=g(x)-h(x)=x^2$. Hence, the original DC objective has a unique global minimizer at $x^\star=0$. Moreover, $\partial g(0)=\partial h(0)=\{1\}$. Thus the lifted HOME-DC critical point is
\[
    s^\star=0+\gamma^{q-1}J_q(1)=\gamma^{q-1}.
\]
Therefore the corresponding envelope minimizer is located at $s^\star=\gamma^{q-1}$, whereas the minimizer of the original problem is located at $x^\star=0$. For $p=2$,
the center is shifted from $x^\star=0$ to $s^\star=\gamma$. 
\end{example}
Similar behavior may occur for maximizers and saddle-type points.
%%%%%%%%%
\begin{example}[The lift depends on the chosen DC decomposition]
\label{ex:decomposition-dependent-lift}
For every $c\in\R$, the same function $\gf(x)=x^2$ admits the DC representation
\[
    \gf(x)=g_c(x)-h_c(x),  \quad g_c(x):=x^2+cx, \quad  h_c(x):=cx.
\]
The original objective has the same unique minimizer $x^\star=0$ for every $c$.
However,
\[
    \partial g_c(0)\cap\partial h_c(0)=\{c\},
\]
and hence the lifted envelope critical point is
\[
    s_c^\star =\gamma^{q-1}J_q(c) = \gamma^{q-1}|c|^{q-2}c.
\]
Thus different DC decompositions of the same function may produce different lifted critical points of the envelope. This is not a contradiction, since the HOME-DC function depends on the ordered pair $(g,h)$, not merely on the difference $g-h$.
\end{example}

\begin{theorem}[Minimizers of $\gf$ and $\fgam{\gf}{p}{\gamma}$]\label{thm:minimizer-correspondence}
Let $S_\star:=\argmint{s\in \R^n}\fgam{\gf}{p}{\gamma}(s)$.
Then
\begin{equation}\label{eq:min-correspondence-p}
    \argmint{x\in\R^n} \gf(x)=\prox{h}{\gamma}{p}(S_\star)=\prox{g}{\gamma}{p}(S_\star),
\end{equation}
and
\begin{equation}\label{eq:inf-correspondence-p}
   {\mathop {\mathrm{\inf}}\limits_{x\in\R^n}}\gf(x) = {\mathop {\mathrm{\inf}}\limits_{s\in \R^n}} \fgam{\gf}{p}{\gamma}(s).
\end{equation}
\end{theorem}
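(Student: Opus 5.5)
The plan is to prove the infimum identity \eqref{eq:inf-correspondence-p} first, by two opposite inequalities, and then derive the set identity \eqref{eq:min-correspondence-p} from the equality cases. Throughout we use the standing Assumption~\ref{ass:basic}, so $\mathcal X^\star\neq\emptyset$ and $\gf^\star$ is finite.

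For the bound $\inf_s \fgam{\gf}{p}{\gamma}(s)\ge \gf^\star$, fix $s\in\R^n$ and set $v=\prox{g}{\gamma}{p}(s)$, $u=\prox{h}{\gamma}{p}(s)$. If $v\in D$, Theorem~\ref{thm:two-sided-comparison} gives $\fgam{\gf}{p}{\gamma}(s)\ge\gf(v)\ge\gf^\star$. The case $v\notin D$ is the one obstacle. Here $v\in\dom g$, so $v\notin\dom h$ and $h(v)=+\infty$, and the inequality in \eqref{eq:phiGammaP} becomes degenerate. I would avoid evaluating $\gf$ at $v$ and argue directly with the two infima:
\[
\fgam{g}{p}{\gamma}(s)=\inf_y\Bigl\{g(y)+\tfrac{1}{p\gamma}\|y-s\|^p\Bigr\}\ge \inf_y\Bigl\{\gf^\star+h(y)+\tfrac1{p\gamma}\|y-s\|^p\Bigr\}=\gf^\star+\fgam{h}{p}{\gamma}(s).
\]
This uses $g(y)\ge \gf^\star+h(y)$ for $y\in D$. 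It also uses the same inequality off $D$: for $y\in\dom g\setminus\dom h$ this requires $g(y)=+\infty$, i.e.\ $\dom g\subseteq\dom h$. I would check whether the convention $\infty-\infty=\infty$ together with the standing assumptions is meant to guarantee this; if it is not, I would add $\dom g\subseteq\dom h$ as the intended reading. With that containment, the display yields $\fgam{\gf}{p}{\gamma}\ge\gf^\star$ everywhere, with no case split needed.

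For the reverse bound, take any $x^\star\in\mathcal X^\star$. Since $x^\star$ is a global minimizer, it is DC-stationary: for any $\xi\in\partial h(x^\star)$, the inequality $g(y)\ge \gf^\star+h(y)\ge g(x^\star)+\langle\xi,y-x^\star\rangle$ gives $\xi\in\partial g(x^\star)$. Here I assume $\partial h(x^\star)\neq\emptyset$, which holds, for instance, when $x^\star\in\inter{\dom h}$. Theorem~\ref{thm:dc-stationary-representation} then provides $s$ with $u(s)=v(s)=x^\star$, and Remark~\ref{rem:domain} gives $\fgam{\gf}{p}{\gamma}(s)=\gf(x^\star)=\gf^\star$. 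Hence the infimum of the envelope equals $\gf^\star$ and is attained, so $S_\star\neq\emptyset$ and $\mathcal X^\star\subseteq u(S_\star)\cap v(S_\star)$.

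For the converse inclusion, take $s\in S_\star$. Then $s$ is a global minimizer of a $\mathcal C^1$ function, so $s\in\crit(\fgam{\gf}{p}{\gamma})$. Theorem~\ref{thm:crit-points} then gives $u(s)=v(s)=x$ with $\gf(x)=\fgam{\gf}{p}{\gamma}(s)=\gf^\star$, so $x\in\mathcal X^\star$. This yields $u(S_\star)=v(S_\star)=\mathcal X^\star$. The main obstacle is the domain issue in the lower bound, together with the nonemptiness of $\partial h$ at minimizers.
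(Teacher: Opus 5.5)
Your proposal is correct and follows the paper's proof essentially step by step: the lower bound comes from the envelope comparison, the upper bound from showing that global minimizers are DC-stationary and lifting them via Theorem~\ref{thm:dc-stationary-representation}, and the reverse inclusion from $\nabla\fgam{\gf}{p}{\gamma}(s)=0$ together with Theorem~\ref{thm:crit-points}; your direct infimum comparison merely re-derives the lower bound of Theorem~\ref{thm:two-sided-comparison}, and the containment $\dom{g}\subseteq\dom{h}$ you hedge on is already forced by the finiteness of $\gf^\star$ that you assume, since $\gf=-\infty$ on $\dom{g}\setminus\dom{h}$ (so $v(s)\in D$ for every $s$). The hypothesis $\partial h(x^\star)\neq\emptyset$ that you add is used silently in the paper's proof and is genuinely needed, because $\prox{h}{\gamma}{p}(\R^n)\subseteq\Dom{\partial h}$ by Lemma~\ref{lem:optimality}: for $n=1$ and $g=h$ equal to $-\sqrt{x}$ on $[0,+\infty)$ and $+\infty$ elsewhere, one has $\mathcal X^\star=[0,+\infty)$ and $S_\star=\R$, but $\prox{h}{\gamma}{p}(S_\star)=(0,+\infty)$.
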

\begin{proof}
    Let $s\in\R^n$ and define $u:=\prox{h}{\gamma}{p}(s)$ and $v:=\prox{g}{\gamma}{p}(s)$.
By Theorem~\ref{thm:two-sided-comparison}, $\gf(v)\le \fgam{\gf}{p}{\gamma}(s)$, and hence
\[
    {\mathop {\mathrm{\inf}}\limits_{x\in\R^n}}\gf(x) \le \fgam{\gf}{p}{\gamma}(s)
    \quad \forall s\in\R^n.
\]
Taking the infimum over $s$ yields
\begin{equation}\label{eq:inf-ineq-1}
{\mathop {\mathrm{\inf}}\limits_{x\in\R^n}}\gf(x) \leq {\mathop {\mathrm{\inf}}\limits_{s\in \R^n}} \fgam{\gf}{p}{\gamma}(s).
\end{equation}
For $x^\star\in\argmin{x\in\R^n}\gf$ and for all $y\in\R^n$, we have $g(y)-h(y)\ge g(x^\star)-h(x^\star)$, i.e., $g(y)\ge g(x^\star)+h(y)-h(x^\star)$.
Let $\xi\in \partial h(x^\star)$. Since $h$ is convex,
\[
    h(y)\ge h(x^\star)+\langle \xi,y-x^\star\rangle,
    \quad \forall y\in\R^n.
\]
Substituting this inequality into the preceding display yields
\[
    g(y)\ge g(x^\star)+\langle \xi,y-x^\star\rangle,
    \quad \forall y\in\R^n,
\]
i.e., $\xi\in \partial g(x^\star)$, which implies $\partial h(x^\star)\subseteq \partial g(x^\star)$.
Hence, $x^\star$ is DC-stationary. By Theorem~\ref{thm:dc-stationary-representation}, there exists $s^\star\in\R^n$ such that
\[
    x^\star=\prox{g}{\gamma}{p}(s^\star)=\prox{h}{\gamma}{p}(s^\star).
\]
Applying Theorem~\ref{thm:two-sided-comparison} with $u=v=x^\star$, we obtain
\[
    \fgam{\gf}{p}{\gamma}(s^\star)=\gf(x^\star)={\mathop {\mathrm{\inf}}\limits_{x\in\R^n}}\gf(x),
\]
i.e., $\inf_{s\in \R^n} \fgam{\gf}{p}{\gamma}(s)\le \inf_{x\in \R^n}\gf(x)$. Combined with \eqref{eq:inf-ineq-1}, we get \eqref{eq:inf-correspondence-p}.
Moreover, $s^\star\in S_\star$, i.e.,
\[
    x^\star\in \prox{h}{\gamma}{p}(S_\star)\cap \prox{g}{\gamma}{p}(S_\star),
\]
leading to
\begin{equation}\label{eq:subset-forward-p}
    \argmint{x\in\R^n}\gf(x)\subseteq\prox{h}{\gamma}{p}(S_\star),
    \quad
    \argmint{x\in\R^n}\gf(x)\subseteq\prox{g}{\gamma}{p}(S_\star).
\end{equation}
Let $s^\star\in S_\star$. Since $\fgam{\gf}{p}{\gamma}$ is differentiable and $s^\star$ minimizes it,
$\nabla \fgam{\gf}{p}{\gamma}(s^\star)=0$, i.e.,
\[
    \prox{g}{\gamma}{p}(s^\star)=\prox{h}{\gamma}{p}(s^\star).
\]
Define $x^\star:=\prox{h}{\gamma}{p}(s^\star)=\prox{g}{\gamma}{p}(s^\star)$. Applying Theorem~\ref{thm:two-sided-comparison}, it follows that $\fgam{\gf}{p}{\gamma}(s^\star)=\gf(x^\star)$.
Since $s^\star\in S_\star$,
\[
    \gf(x^\star)=\fgam{\gf}{p}{\gamma}(s^\star)={\mathop {\mathrm{\inf}}\limits_{s\in \R^n}} \fgam{\gf}{p}{\gamma}(s)={\mathop {\mathrm{\inf}}\limits_{x\in\R^n}}\gf(x),
\]
i.e., $x^\star\in\argmin{x\in\R^n}\gf(x)$, leading to
\[
    \prox{h}{\gamma}{p}(S_\star)\subseteq \argmint{x\in\R^n}\gf(x),
    \quad
    \prox{g}{\gamma}{p}(S_\star)\subseteq \argmint{x\in\R^n}\gf(x).
\]
Together with \eqref{eq:subset-forward-p}, this ensures \eqref{eq:min-correspondence-p}.
\end{proof}

%%% End Section: difference of high-order Moreau envelopes %%%%%%%%%%%%%%%%%%%%%%%%%%%%%%%%%%%%%%%%%%%%%%%%%%%%%%%%%%%%%%%%%%%%%%%%%%
%%%%%%%%%%%%%%%%%%%%%%%%%%%%%%%%%%%%%%%%%%%%%%%%%%%%%%%%%%%%%%%%%%%%%%%%%%%%%%%%%%%%%%%%%%%%%%%%%%%%%%%%%%%%%%%%%%%%%%%%%%%%%%%%%%%%%%%%%%%%%%%%%%%%

%%%%%%%%%%%%%%%%%%%%%%%%%%%%%%%%%%%%%%%%%%%%%%%%%%%%%%%%%%%%%%%%%%%%%%%%%%%%%%%%%%%%%%%%%%%%%%%%%%%%%%%%%%%%%%%%%%%%%%%%%%%%%%%%%%%%%%%%%%%%%%%%%%%%
%%% Section: Inexact high-order two-prox methods %%%%%%%%%%%%%%%%%%%%%%%%%%%%%%%%%%%%%%%%%%%%%%%%%%%%%%%%%%%%%%%%%%%%%%%%%%%%%%%%%%%%%%%%%%%%%%%%%
\section{IDEA: Inexact Descent Algorithms via HOME-DC}\label{sec:algorithm}
In this section, we study an inexact descent framework applied to HOME-DC function. We formulate the method in terms of a general gradient-related
search direction computed from an approximation of the envelope gradient. This includes the gradient method as a special case and also covers safeguarded quasi-Newton-type
directions, including L-BFGS-type directions, whenever the stated descent and boundedness conditions hold.
%%%%%%%%%%%%%%%%%%%%%%%%%%%%%%%%%%%%%%%%%%%%%%%%%%%%%%%%%%%%%%%%%%%%%%%%%%%%%%%%%%%%%%%%%%%%%%%%%
\subsection{{\bf Inexact Oracle for HOME-DC}}\label{sec:inexactOracle}

In this section, we discuss the construction of an inexact first-order oracle for HOME-DC function. Let $\widetilde u^k,\widetilde v^k$ be approximate solutions of the two high-order proximal subproblems at iteration $k$:
\[
    \widetilde u^k \approx \prox{h}{\gamma}{p}(s^k),
    \quad
    \widetilde v^k \approx \prox{g}{\gamma}{p}(s^k).
\]
The inexact first-order oracle is defined by
\begin{equation}\label{eq:approx-gradient}
    \widetilde G_k:=\frac{1}{\gamma}\Bigl(J_p(s^k-\widetilde v^k)-J_p(s^k-\widetilde u^k)\Bigr).
\end{equation}
The corresponding oracle error is
\[
    e^k:=\widetilde G_k-\nabla\fgam{\gf}{p}{\gamma}(s^k).
\]
The role of the lower-level inexactness analysis is to impose conditions on $\widetilde u^k$ and $\widetilde v^k$ ensuring that the accumulated oracle error $\{e^k\}$ is small enough.  Section~\ref{sec:residual} later gives residual-based sufficient conditions for this requirement.

\begin{theorem}[Asymptotic recovery of primal minimizers from approximate proximal points]\label{thm:inexact-primal-recovery}
Let $S_\star:=\argmint{s\in \R^n}\fgam{\gf}{p}{\gamma}(s)$.
Suppose $\{s^k\}_{k\in\Nz}$  satisfies
\begin{equation}\label{eq:s-to-Sstar}
    \dist(s^k,S_\star)\to 0.
\end{equation}
For each $k\in \Nz$, define the exact proximal points
$u^k:=\prox{h}{\gamma}{p}(s^k)$ and $v^k:=\prox{g}{\gamma}{p}(s^k)$
and let $\widetilde u^k,\widetilde v^k\in\R^n$ be approximate proximal points satisfying
\begin{equation}\label{eq:prox-approx-vanish}
    \norm{\widetilde u^k-u^k}\to 0,
    \quad
    \norm{\widetilde v^k-v^k}\to 0.
\end{equation}
Then
\begin{equation}\label{eq:dist-u-min}
    \dist(u^k,\mathcal X^\star)\to 0,
    \quad
    \dist(v^k,\mathcal X^\star)\to 0,
\end{equation}
and
\begin{equation}\label{eq:dist-utilde-min}
    \dist(\widetilde u^k,\mathcal X^\star)\to 0,
    \quad
    \dist(\widetilde v^k,\mathcal X^\star)\to 0.
\end{equation}
\end{theorem}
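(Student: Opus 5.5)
The plan is to transfer the convergence $\dist(s^k,S_\star)\to 0$ in the envelope variable to the primal variable. Two earlier results do the work: the identification of minimizers in Theorem~\ref{thm:minimizer-correspondence} and the uniform Lipschitz continuity of the high-order proximal map in Theorem~\ref{th:home:lip}. The exact proximal points are handled first, and the approximate ones then follow by a triangle inequality.

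First, since $\dist(s^k,S_\star)\to 0$ is assumed, $S_\star$ is nonempty; otherwise every distance would be $+\infty$. For each $k$, choose $\bar s^k\in S_\star$ with
\[
\|s^k-\bar s^k\|\le \dist(s^k,S_\star)+\tfrac{1}{k+1}.
\]
This avoids any question of whether the distance is attained, although $S_\star$ is in fact closed because $\fgam{\gf}{p}{\gamma}$ is continuous. By Theorem~\ref{thm:minimizer-correspondence}, the points $\bar u^k:=\prox{h}{\gamma}{p}(\bar s^k)$ and $\bar v^k:=\prox{g}{\gamma}{p}(\bar s^k)$ both lie in $\argmin{x\in\R^n}\gf(x)=\mathcal X^\star$. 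By Theorem~\ref{th:home:lip}, applied to the proper lsc convex functions $h$ and $g$, there is a constant $L_p$ depending only on $p$ such that
\[
\dist(u^k,\mathcal X^\star)\le \|u^k-\bar u^k\|\le L_p\|s^k-\bar s^k\|\le L_p\Bigl(\dist(s^k,S_\star)+\tfrac{1}{k+1}\Bigr)\to 0 .
\]
The same bound holds for $v^k$ with $\bar v^k$ in place of $\bar u^k$. This gives \eqref{eq:dist-u-min}.

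Second, for the approximate points, use that $\dist(\cdot,\mathcal X^\star)$ is $1$-Lipschitz. This gives
\[
\dist(\widetilde u^k,\mathcal X^\star)\le \|\widetilde u^k-u^k\|+\dist(u^k,\mathcal X^\star).
\]
Both terms on the right vanish, by \eqref{eq:prox-approx-vanish} and by the first step respectively. The argument for $\widetilde v^k$ is identical, which yields \eqref{eq:dist-utilde-min}.

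The only point I expect to need care is the first step. One must make sure that Theorem~\ref{thm:minimizer-correspondence} applies pointwise, i.e.\ that every element of $S_\star$ is mapped into $\mathcal X^\star$ by both proximal maps, which is exactly the inclusion $\prox{h}{\gamma}{p}(S_\star)\subseteq\mathcal X^\star$ proved there. One must also check that the Lipschitz constant in Theorem~\ref{th:home:lip} is uniform in $k$, which it is because $\gamma$ is fixed. No compactness or boundedness of $\{s^k\}$ is needed, since the global Lipschitz estimate replaces any local argument.
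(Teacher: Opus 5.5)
Your proof is correct and takes essentially the paper's route: pick near-nearest points of $S_\star$, map them into $\mathcal X^\star$ via Theorem~\ref{thm:minimizer-correspondence}, control the gap through regularity of the proximal maps, and finish with the triangle inequality. The paper argues by contradiction and appeals only to continuity of $\prox{h}{\gamma}{p}$, which does not by itself give $\|u^k-\bar u^k\|\to 0$ when both centers move, possibly unboundedly; your use of the global Lipschitz bound of Theorem~\ref{th:home:lip} closes that gap and even yields the quantitative estimate $\dist(u^k,\mathcal X^\star)\le L_p\,\dist(s^k,S_\star)$.
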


\begin{proof}
We first prove \eqref{eq:dist-u-min}.
Assume, for contradiction, that $\dist(u^k,\mathcal X^\star)\not\to 0$. Then there exist
$\varepsilon>0$ and a subsequence $\{u^{k_j}\}_{j\in\Nz}$ such that
\[
    \dist(u^{k_j},\mathcal X^\star)\ge \varepsilon, \qquad \forall j\in \Nz.
\]
Since $\dist(s^k,S_\star)\to 0$, for each $j$ we can choose $\hat s^{k_j}\in S_\star$  satisfying
\[
    \norm{s^{k_j}-\hat s^{k_j}}\le \dist(s^{k_j},S_\star)+\frac1j \to 0.
\]
Set $\hat u^{k_j}:=\prox{h}{\gamma}{p}(\hat s^{k_j})$.
By Theorem~\ref{thm:minimizer-correspondence},
$\hat u^{k_j}\in \mathcal X^\star$ for all $j\in\Nz$.
Since $h$ is proper, lsc, and convex, the map $\prox{h}{\gamma}{p}$ is continuous.
Hence
\[
    u^{k_j} =\prox{h}{\gamma}{p}(s^{k_j})
    \to\prox{h}{\gamma}{p}(\hat s^{k_j}) = \hat u^{k_j}.
\]
More precisely $\norm{u^{k_j}-\hat u^{k_j}}\to 0$, and thus
\[
    \dist(u^{k_j},\mathcal X^\star)\le\norm{u^{k_j}-\hat u^{k_j}}\to 0,
\]
which contradicts $\dist(u^{k_j},\mathcal X^\star)\ge\varepsilon$.
Thus $\dist(u^k,\mathcal X^\star)\to 0$.
The proof for $v^k$ is identical, using the continuity of $\prox{g}{\gamma}{p}$ and the fact that
$\prox{g}{\gamma}{p}(S_\star)=\mathcal X^\star$, 
by Theorem~\ref{thm:minimizer-correspondence}.

We now prove \eqref{eq:dist-utilde-min}. By the triangle inequality,
\[
    \dist(\widetilde u^k,\mathcal X^\star)
    \le
    \norm{\widetilde u^k-u^k}+\dist(u^k,\mathcal X^\star).
\]
Using \eqref{eq:prox-approx-vanish} and \eqref{eq:dist-u-min}, the right-hand side tends to zero.
Hence $\dist(\widetilde u^k,\mathcal X^\star)\to 0$.
The proof for $\widetilde v^k$ is the same.
\end{proof}

\begin{corollary}[Pointwise convergence of the approximate proximal points]\label{cor:pointwise-primal-recovery}
Let the assumptions of Theorem~\ref{thm:inexact-primal-recovery} hold, and assume in addition that
$s^k\to s^\star$ for some $s^\star\in S_\star$.
Define $x^\star:=\prox{h}{\gamma}{p}(s^\star)=\prox{g}{\gamma}{p}(s^\star)$.
Then $x^\star\in\mathcal X^\star$, and
$u^k\to x^\star$ and $v^k\to x^\star$.
If moreover
\[
    \norm{\widetilde u^k-u^k}\to 0,
    \quad
    \norm{\widetilde v^k-v^k}\to 0,
\]
then
\[
    \widetilde u^k\to x^\star,
    \quad
    \widetilde v^k\to x^\star.
\]
\end{corollary}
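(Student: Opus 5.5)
The plan is to combine Theorem~\ref{thm:minimizer-correspondence}, which identifies the minimizers of $\gf$ with the images of $S_\star$ under the proximal maps, with the global Lipschitz continuity of the high-order proximal operator from Theorem~\ref{th:home:lip}.

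\textbf{Step 1: $x^\star$ is well defined and lies in $\mathcal X^\star$.} Since $s^\star\in S_\star$ and $\fgam{\gf}{p}{\gamma}$ is continuously differentiable by Proposition~\ref{prop:HOME-DC-gradient-regularity}, we have $\nabla\fgam{\gf}{p}{\gamma}(s^\star)=0$. Theorem~\ref{thm:crit-points} then gives $\prox{h}{\gamma}{p}(s^\star)=\prox{g}{\gamma}{p}(s^\star)$, so $x^\star$ is well defined. Equation \eqref{eq:min-correspondence-p} of Theorem~\ref{thm:minimizer-correspondence} then yields $x^\star\in\prox{h}{\gamma}{p}(S_\star)=\mathcal X^\star$.

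\textbf{Step 2: convergence of the exact proximal points.} Theorem~\ref{th:home:lip} gives
\[
\|u^k-x^\star\|=\|\prox{h}{\gamma}{p}(s^k)-\prox{h}{\gamma}{p}(s^\star)\|\le L_p\|s^k-s^\star\|\to0,
\]
and the same bound holds for $v^k$ with $g$ in place of $h$. Note that this step does not need the distance hypothesis \eqref{eq:s-to-Sstar}; it follows directly from $s^k\to s^\star$. Mere continuity of the proximal maps would also suffice here.

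\textbf{Step 3: convergence of the approximate proximal points.} By the triangle inequality,
\[
\|\widetilde u^k-x^\star\|\le\|\widetilde u^k-u^k\|+\|u^k-x^\star\|.
\]
Both terms on the right vanish, the first by the additional hypothesis and the second by Step 2. The argument for $\widetilde v^k$ is identical.

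There is no real obstacle in this argument. The only point requiring care is Step 1, where one must check that the two proximal points coincide at $s^\star$; this is supplied by criticality of minimizers via Theorem~\ref{thm:crit-points}.
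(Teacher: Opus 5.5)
Your proposal is correct and follows essentially the paper's argument: Theorem~\ref{thm:minimizer-correspondence} places $x^\star$ in $\mathcal X^\star$, continuity of the proximal maps gives $u^k\to x^\star$ and $v^k\to x^\star$, and the triangle inequality handles $\widetilde u^k$ and $\widetilde v^k$. Your extra check via Theorem~\ref{thm:crit-points} that the two proximal points coincide at $s^\star$ is a small refinement of the same route, as is your use of the Lipschitz bound from Theorem~\ref{th:home:lip}, which yields the rate $\|u^k-x^\star\|\le L_p\|s^k-s^\star\|$.
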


\begin{proof}
Since $s^\star\in S_\star$, Theorem~\ref{thm:minimizer-correspondence} implies
$x^\star=\prox{h}{\gamma}{p}(s^\star)=\prox{g}{\gamma}{p}(s^\star)\in\argmin{}\gf$.
By continuity of the high-order proximal mappings,
\[
    u^k=\prox{h}{\gamma}{p}(s^k)\to \prox{h}{\gamma}{p}(s^\star)=x^\star,
\]
and similarly
\[
    v^k=\prox{g}{\gamma}{p}(s^k)\to \prox{g}{\gamma}{p}(s^\star)=x^\star.
\]
The convergence of $\widetilde u^k$ and $\widetilde v^k$ follows from the triangle inequality.
\end{proof}

%%%%%%%%%%%%%%%%%%%%%%%%%%%%%%%%%%%%%%%%%%%%%%%%%%%%%%%%%%%%%%%%%%%%%%%%%%%%%%%%%%%%%%%%%%%%%%%%%
\subsection{{\bf Generic IDEA }}
We now present an inexact descent algorithm (IDEA) for solving the minimization problem \eqref{eq:dc-problem} via the inexact oracle for HOME-DC function given in Section~\ref{sec:inexactOracle}.
Our algorithm generates an iterative scheme given by
\[
    s^{k+1}=s^k+\alpha_k d^k,
\]
where $\alpha_k>0$ is a step-size, and $d^k$ is a search direction that satisfies the following sufficient descent condition.

\begin{assumption}[Inexact sufficient descent direction]
\label{ass:gradient-related-direction}
There exist constants $\sigma>0$ and $\kappa>0$ such that, for every $k\in \Nz$,
\[
 \langle \widetilde G_k,d^k\rangle  \le -\sigma\|\widetilde G_k\|^2,
\]
and
\[
    \|d^k\|\le \kappa\|\widetilde G_k\|.
\]
\end{assumption}
We summarize the abstract scheme below.  The stopping criteria may be based on the inexact envelope residual $\|\widetilde G_k\|$, on lower-level residuals, or on a prescribed iteration budget.
\vspace{4mm}
%%%%%%%%%%%%%%%%%%%%%%
\begin{algorithm}[h]
\textbf{Initialization:} Choose $s^0\in\R^ n$,~ $\alpha_0\in (0,\overline \alpha)$, and set $k=0$;
\begin{algorithmic}[1]
    \Repeat
        \State Compute approximate proximal points $\widetilde u^k\approx\prox{h}{\gamma}{p}(s^k)$ and $\widetilde v^k\approx\prox{g}{\gamma}{p}(s^k)$;
        \State Form the inexact oracle $\widetilde G_k$ using \eqref{eq:approx-gradient};
        \State Choose a direction $d^k\in \R^n$ satisfying Assumption~\ref{ass:gradient-related-direction};
        \State Choose a step-size $\alpha_k>0$ using a prescribed (dynamic) rule or (inexact) line search;
        \State Set $s^{k+1}=s^k+\alpha_k d^k$ and $k=k+1$;
    \Until{the stopping criterion holds.}
\caption{IDEA (Inexact DEscent Algorithm) \label{alg:idea}}
\end{algorithmic}
\end{algorithm}
%%%%%%%%%%%%%%%
\vspace{4mm}

\begin{remark}[Examples of admissible directions]\label{rem:examples-gradient-related-directions}
The steepest descent direction $d^k=-\widetilde G_k$ satisfies Assumption~\ref{ass:gradient-related-direction} with
$\sigma=\kappa=1$. More generally, if
\[
    d^k=-H_k\widetilde G_k,
\]
where $H_k$ is a symmetric positive  definite matrix and there exist constants
$0<m\le M<+\infty$ such that
\[
    mI\preceq H_k\preceq MI,
\]
then
\[
    \langle \widetilde G_k,d^k\rangle = -\langle \widetilde G_k,H_k\widetilde G_k\rangle \le -m\|\widetilde G_k\|^2
\]
and
\[
    \|d^k\|\le M\|\widetilde G_k\|.
\]
Thus, safeguarded quasi-Newton directions, including L-BFGS-type directions, fall within this framework whenever the generated inverse-Hessian approximations are
uniformly positive definite and uniformly bounded.
\end{remark}

We now state the assumptions for the convergence analysis.

\begin{assumption}[Standing assumptions for the descent analysis]
\label{ass:descent-analysis}
The following conditions hold:
\begin{enumerate}[label=(\textbf{A\arabic*}), leftmargin=0.75cm]
\item \label{ass:descent-analysis:pge2}
$p\ge2$.

\item \label{ass:descent-analysis:bounded}
There exists a bounded convex set $\mathcal B\subset\R^n$ such that, for every $k$,
\[
    [s^k,s^{k+1}]:=\{s^k+t(s^{k+1}-s^k):t\in[0,1]\} \subset \mathcal B.
\]

\item \label{ass:descent-analysis:lipschitz}
The gradient $\nabla\fgam{\gf}{p}{\gamma}$ is Lipschitz continuous on $\mathcal B$ with constant $L>0$.

\item \label{ass:descent-analysis:lower}
$\fgam{\gf}{p}{\gamma}$ is bounded from below on $\R^n$; that is,
$\inft{s\in\R^n}\fgam{\gf}{p}{\gamma}(s)>-\infty$.

\item \label{ass:descent-analysis:error}
The oracle errors satisfy
\[
    \sum_{k=0}^{+\infty}\|e^k\|^2<+\infty.
\]
\end{enumerate}
\end{assumption}
Assumption~\ref{ass:descent-analysis}~\ref{ass:descent-analysis:pge2} is not part of the definition of HOME-DC, which is defined for every $p>1$. It is only a restriction needed for the descent proof. For $p=2$, the classical quadratic smoothness regime is recovered. For $p>2$, Lipschitz continuity of the envelope gradient is available on bounded sets, consistent with Assumption~\ref{ass:descent-analysis}~\ref{ass:descent-analysis:bounded}. For $1<p<2$, the envelope gradient is generally H\"older rather than Lipschitz continuous, so the standard Lipschitz-gradient descent argument used below does not apply.

In the next result, we show that the function values of the HOME-DC function satisfy a nonmonotone descent condition, which is necessary for establishing the convergence of the IDEA framework.

%%%%%%%%%

\begin{lemma}[Perturbed reference-value descent inequality]
\label{lem:perturbed-descent-inequality}
Let Assumptions~\ref{ass:gradient-related-direction} and \ref{ass:descent-analysis} hold. Let $\{M_k\}\subset\mathbb R$ be a reference sequence satisfying
$\fgam{\gf}{p}{\gamma}(s^k) \le M_k$ for all $k\in \Nz$. Assume that the step-sizes satisfy
\[
    0<\underline\alpha\le\alpha_k\le\overline\alpha < \frac{\sigma}{L\kappa^2}.
\]
Then there exist constants $c_0>0$ and $c_1>0$ such that, for all $k\in \Nz$,
\[
\fgam{\gf}{p}{\gamma}(s^{k+1}) \le M_k - c_0\|\widetilde G_k\|^2 + c_1\|e^k\|^2.
\]
More precisely, one can take
$c_0 :=\frac{\underline\alpha}{2} \left(\sigma-L\overline\alpha\kappa^2\right)>0$ and
$c_1:= \frac{\overline\alpha\kappa^2}{2\sigma}$.

\end{lemma}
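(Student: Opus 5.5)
We apply the descent lemma on the segment $[s^k,s^{k+1}]$, control the oracle error with Young's inequality, and then replace $\fgam{\gf}{p}{\gamma}(s^k)$ by $M_k$. Throughout we write $\nabla_k:=\nabla\fgam{\gf}{p}{\gamma}(s^k)=\widetilde G_k-e^k$.

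\textbf{Step 1 (descent lemma).} By Assumption~\ref{ass:descent-analysis}~\ref{ass:descent-analysis:bounded} and \ref{ass:descent-analysis:lipschitz}, the segment $[s^k,s^{k+1}]$ lies in the convex set $\mathcal B$, on which $\nabla\fgam{\gf}{p}{\gamma}$ is $L$-Lipschitz. Integrating the gradient along the segment therefore gives
\[
\fgam{\gf}{p}{\gamma}(s^{k+1})\le \fgam{\gf}{p}{\gamma}(s^k)+\alpha_k\langle \nabla_k,d^k\rangle+\tfrac{L}{2}\alpha_k^2\|d^k\|^2 .
\]

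\textbf{Step 2 (splitting the inner product).} Write $\langle\nabla_k,d^k\rangle=\langle\widetilde G_k,d^k\rangle-\langle e^k,d^k\rangle$. By Assumption~\ref{ass:gradient-related-direction}, the first term is at most $-\sigma\|\widetilde G_k\|^2$ and $\|d^k\|^2\le\kappa^2\|\widetilde G_k\|^2$.

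\textbf{Step 3 (the error term).} This is the only delicate point, because the constants must come out exactly as stated. We use Young's inequality with weight $\sigma/\kappa^2$:
\[
|\langle e^k,d^k\rangle|\le \frac{\kappa^2}{2\sigma}\|e^k\|^2+\frac{\sigma}{2\kappa^2}\|d^k\|^2\le \frac{\kappa^2}{2\sigma}\|e^k\|^2+\frac{\sigma}{2}\|\widetilde G_k\|^2 .
\]

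\textbf{Step 4 (combining).} Steps 1--3 give
\[
\fgam{\gf}{p}{\gamma}(s^{k+1})\le \fgam{\gf}{p}{\gamma}(s^k)-\alpha_k\Big(\tfrac{\sigma}{2}-\tfrac{L\alpha_k\kappa^2}{2}\Big)\|\widetilde G_k\|^2+\frac{\alpha_k\kappa^2}{2\sigma}\|e^k\|^2 .
\]
We now bound the step-size factors.
\begin{itemize}
\item Since $\alpha_k\le\overline\alpha<\sigma/(L\kappa^2)$, the bracket satisfies $\tfrac12(\sigma-L\alpha_k\kappa^2)\ge\tfrac12(\sigma-L\overline\alpha\kappa^2)>0$.
\item Since the bracket is positive and $\alpha_k\ge\underline\alpha$, the coefficient of $\|\widetilde G_k\|^2$ is at least $c_0=\tfrac{\underline\alpha}{2}(\sigma-L\overline\alpha\kappa^2)$.
\item Since $\alpha_k\le\overline\alpha$, the coefficient of $\|e^k\|^2$ is at most $c_1=\overline\alpha\kappa^2/(2\sigma)$.
\end{itemize}
Finally, $\fgam{\gf}{p}{\gamma}(s^k)\le M_k$ yields
\[
\fgam{\gf}{p}{\gamma}(s^{k+1})\le M_k-c_0\|\widetilde G_k\|^2+c_1\|e^k\|^2 ,
\]
which is the claimed inequality.
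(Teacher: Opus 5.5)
Your proof is correct and follows essentially the same route as the paper: the descent lemma on $[s^k,s^{k+1}]\subset\mathcal B$, the substitution $\nabla\fgam{\gf}{p}{\gamma}(s^k)=\widetilde G_k-e^k$, Young's inequality on the error term, and then the step-size bounds together with $\fgam{\gf}{p}{\gamma}(s^k)\le M_k$. The only cosmetic difference is the order of two steps: you apply a weighted Young inequality to $\langle e^k,d^k\rangle$ before using $\|d^k\|\le\kappa\|\widetilde G_k\|$, whereas the paper first bounds $|\langle e^k,d^k\rangle|\le\kappa\|e^k\|\,\|\widetilde G_k\|$ and then applies Young; both give the same constants $c_0$ and $c_1$.
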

\begin{proof}
Since $[s^k,s^{k+1}]\subset\mathcal B$ and $\nabla\fgam{\gf}{p}{\gamma}$ is $L$-Lipschitz continuous on $\mathcal B$, the descent lemma gives
\[
\fgam{\gf}{p}{\gamma}(s^{k+1}) \le \fgam{\gf}{p}{\gamma}(s^k) +\alpha_k\langle \nabla\fgam{\gf}{p}{\gamma}(s^k),d^k\rangle
    + \frac{L}{2}\alpha_k^2\|d^k\|^2.
\]
Using $\nabla\fgam{\gf}{p}{\gamma}(s^k)=\widetilde G_k-e^k$, we obtain
\[
\fgam{\gf}{p}{\gamma}(s^{k+1})\le \fgam{\gf}{p}{\gamma}(s^k) + \alpha_k\langle \widetilde G_k,d^k\rangle
    - \alpha_k\langle e^k,d^k\rangle + \frac{L}{2}\alpha_k^2\|d^k\|^2.
\]
Using Assumption~\ref{ass:gradient-related-direction},
\[
\fgam{\gf}{p}{\gamma}(s^{k+1})\le  \fgam{\gf}{p}{\gamma}(s^k)  - \alpha_k\sigma\|\widetilde G_k\|^2  + \alpha_k\kappa\|e^k\|\|\widetilde G_k\|
    +  \frac{L}{2}\alpha_k^2\kappa^2\|\widetilde G_k\|^2.
\]
By Young's inequality,
\[
\alpha_k\kappa\|e^k\|\|\widetilde G_k\| \le \frac{\alpha_k\sigma}{2}\|\widetilde G_k\|^2 + \frac{\alpha_k\kappa^2}{2\sigma}\|e^k\|^2,
\]
leading to
\[
 \fgam{\gf}{p}{\gamma}(s^{k+1})\le \fgam{\gf}{p}{\gamma}(s^k) - \frac{\alpha_k}{2} \left(\sigma-L\alpha_k\kappa^2\right)
    \|\widetilde G_k\|^2 + \frac{\alpha_k\kappa^2}{2\sigma}\|e^k\|^2.
\]
Considering $\alpha_k\ge\underline\alpha$ and $\alpha_k\le\overline\alpha  < \tfrac{\sigma}{L\kappa^2}$,
we obtain
\[
\frac{\alpha_k}{2}\left(\sigma-L\alpha_k\kappa^2\right)\ge\frac{\underline\alpha}{2}\left(\sigma-L\overline\alpha\kappa^2\right)=:c_0>0,
\]
and
\[
\frac{\alpha_k\kappa^2}{2\sigma}\le\frac{\overline\alpha\kappa^2}{2\sigma}=:c_1.
\]
Finally, since $\fgam{\gf}{p}{\gamma}(s^k)\le M_k$, the claimed inequality follows.
\end{proof}

\begin{remark}[Nonmonotone reference values and safeguarded implementations]
\label{rem:gll-nonmonotone-reference}
The reference sequence $M_k$ in Lemma~\ref{lem:perturbed-descent-inequality} is arbitrary except for the requirement $\fgam{\gf}{p}{\gamma}(s^k)\le M_k$. The monotone choice $M_k=\fgam{\gf}{p}{\gamma}(s^k)$ recovers the usual perturbed descent inequality.
In this monotone case, Assumption~\ref{ass:descent-analysis}~\ref{ass:descent-analysis:lower} and Assumption~\ref{ass:descent-analysis}~\ref{ass:descent-analysis:error} imply that $\{\fgam{\gf}{p}{\gamma}(s^k)\}$ has a finite limit, because
\[
\fgam{\gf}{p}{\gamma}(s^{k+1}) \le \fgam{\gf}{p}{\gamma}(s^k)+c_1\|e^k\|^2
\]
and $\sum_k\|e^k\|^2<+\infty$.

A finite-memory Grippo--Lampariello--Lucidi \cite{grippo1986nonmonotone} reference value is obtained by setting
\begin{equation}\label{eq:M_k}
M_k:=\maxt{0\le j\le \min\{m,k\}}\fgam{\gf}{p}{\gamma}(s^{k-j}),\qquad m\in\mathbb N_0.
\end{equation}
For more nonmonotone strategies, we refer to \cite{ahookhosh2017efficiency,ahookhosh2012class,amini2014inexact,zhang2004nonmonotone}. 
Other nonmonotone reference values may also be used, provided that they dominate $\fgam{\gf}{p}{\gamma}(s^k)$. This flexibility is useful for safeguarded quasi-Newton and L-BFGS implementations, where strict monotonicity may be unnecessarily restrictive.
\end{remark}

Next, we establish the convergence of our generic descent method.
\begin{theorem}[Asymptotic stationarity of inexact gradient-related descent]
\label{thm:reference-value-stationarity}
Let Assumptions~\ref{ass:gradient-related-direction} and \ref{ass:descent-analysis} hold. Let $\{M_k\}\subset\mathbb R$ be a reference sequence satisfying
$\fgam{\gf}{p}{\gamma}(s^k) \le M_k$ for all $k\in \Nz$. Assume that the step-sizes satisfy
\[
    0<\underline\alpha\le\alpha_k\le\overline\alpha < \frac{\sigma}{L\kappa^2}.
\]
Assume further that both limits exist and  $\lim_{k\to\infty}\fgam{\gf}{p}{\gamma}(s^k)=\lim_{k\to\infty}M_k$.
Then $\widetilde G_k\to0$ and $\nabla\fgam{\gf}{p}{\gamma}(s^k)\to 0$.
\end{theorem}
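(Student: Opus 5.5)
The plan is to combine the perturbed reference-value descent inequality of Lemma~\ref{lem:perturbed-descent-inequality} with the assumed equality of the two limits, and then transfer the conclusion from $\widetilde G_k$ to the exact gradient using summability of the oracle errors.

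\textbf{Step 1: isolate the oracle residual.} Lemma~\ref{lem:perturbed-descent-inequality} supplies constants $c_0,c_1>0$ such that, for every $k$,
\[
c_0\|\widetilde G_k\|^2 \le M_k-\fgam{\gf}{p}{\gamma}(s^{k+1}) + c_1\|e^k\|^2 .
\]

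\textbf{Step 2: pass to the limit.} Let $\ell$ denote the common value of $\lim_k\fgam{\gf}{p}{\gamma}(s^k)$ and $\lim_k M_k$. This value is finite, since $\{\fgam{\gf}{p}{\gamma}(s^k)\}$ is bounded below by Assumption~\ref{ass:descent-analysis}~\ref{ass:descent-analysis:lower}, and the limits are taken in $\R$ because $M_k\in\R$. Then $\fgam{\gf}{p}{\gamma}(s^{k+1})\to\ell$ as well, so $M_k-\fgam{\gf}{p}{\gamma}(s^{k+1})\to 0$. Assumption~\ref{ass:descent-analysis}~\ref{ass:descent-analysis:error} gives $\sum_k\|e^k\|^2<\infty$, hence $\|e^k\|\to0$. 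The right-hand side of the Step 1 inequality therefore tends to zero, and since $c_0>0$ we get $\widetilde G_k\to0$.

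\textbf{Step 3: transfer to the exact gradient.} By the definition of $e^k$,
\[
\|\nabla\fgam{\gf}{p}{\gamma}(s^k)\|\le\|\widetilde G_k\|+\|e^k\|,
\]
and both terms on the right tend to zero by Step 2. Hence $\nabla\fgam{\gf}{p}{\gamma}(s^k)\to0$.

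\textbf{Main obstacle.} The only delicate point is that the descent inequality compares $M_k$ with the value at $s^{k+1}$, not at $s^k$. This causes no difficulty: the limit of the shifted sequence $\{\fgam{\gf}{p}{\gamma}(s^{k+1})\}$ coincides with $\ell$, so no telescoping or summation argument is needed. The hypothesis that the two limits exist and coincide is exactly what replaces the telescoping argument used in the monotone case.
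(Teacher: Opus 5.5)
Your proposal is correct and follows essentially the same route as the paper. The paper also rearranges the inequality of Lemma~\ref{lem:perturbed-descent-inequality}, uses the coinciding limits together with $e^k\to0$ to obtain $\widetilde G_k\to0$, and then concludes from $\nabla\fgam{\gf}{p}{\gamma}(s^k)=\widetilde G_k-e^k$.

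One minor point in Step~2: $M_k\in\R$ alone does not rule out a limit of $+\infty$. The common limit is nevertheless finite. Either read ``the limit exists'' in the usual sense, or note that $s^k\in\mathcal B$ by Assumption~\ref{ass:descent-analysis}~\ref{ass:descent-analysis:bounded}, so the continuous function $\fgam{\gf}{p}{\gamma}$ is bounded along $\{s^k\}$.
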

\begin{proof}
Rearranging the inequality in Lemma~\ref{lem:perturbed-descent-inequality} gives
\[
c_0\|\widetilde G_k\|^2\le M_k-\fgam{\gf}{p}{\gamma}(s^{k+1})+c_1\|e^k\|^2.
\]
Since $e^k\to 0$, we obtain $\widetilde G_k\to0$. Since $\nabla\fgam{\gf}{p}{\gamma}(s^k)=\widetilde G_k-e^k$,
we get $\nabla\fgam{\gf}{p}{\gamma}(s^k)\to0$.
\end{proof}

%%%%%%%%%
\begin{corollary}[Stationarity of cluster points]
\label{cor:cluster-points-stationary}
Let the assumptions of Theorem~\ref{thm:reference-value-stationarity}
hold. If $\ov s$ is a cluster point of $\{s^k\}$, then $\nabla\fgam{\gf}{p}{\gamma}(\ov s)=0$.
Consequently, setting $\ov x:=u(\ov s)=v(\ov s)$, where $u=\prox{h}{\gamma}{p}$ and $v=\prox{g}{\gamma}{p}$, gives $\ov x\in\stat(\gf)$.
\end{corollary}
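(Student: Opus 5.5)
The argument combines Theorem~\ref{thm:reference-value-stationarity} with the continuity of $\nabla\fgam{\gf}{p}{\gamma}$, and then applies Theorem~\ref{thm:crit-points} at the limit point.

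Let $\ov s$ be a cluster point of $\{s^k\}$. Choose an infinite index set $J\subseteq\Nz$ such that $s^j\to\ov s$ along $j\in J$. Under the assumptions of Theorem~\ref{thm:reference-value-stationarity}, the full sequence satisfies $\nabla\fgam{\gf}{p}{\gamma}(s^k)\to 0$, so in particular $\nabla\fgam{\gf}{p}{\gamma}(s^j)\to 0$ along $J$.

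By Proposition~\ref{prop:HOME-DC-gradient-regularity}, $\fgam{\gf}{p}{\gamma}$ is continuously differentiable on all of $\R^n$. Passing to the limit along $J$ therefore gives
\[
\nabla\fgam{\gf}{p}{\gamma}(\ov s)=\lim_{j\in J}\nabla\fgam{\gf}{p}{\gamma}(s^j)=0,
\]
that is, $\ov s\in\crit(\fgam{\gf}{p}{\gamma})$. An alternative route is to note that $\ov s\in\closure{\mathcal B}$ and use the Lipschitz bound from Assumption~\ref{ass:descent-analysis}~\ref{ass:descent-analysis:lipschitz}. This is unnecessary, because global continuity of the gradient already suffices.

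For the second claim, apply Theorem~\ref{thm:crit-points} at $s=\ov s$. Criticality of $\ov s$ is equivalent to $u(\ov s)=v(\ov s)$, so $\ov x:=u(\ov s)=v(\ov s)$ is well defined. The same theorem gives $\xi:=\frac1\gamma J_p(\ov s-\ov x)\in\partial g(\ov x)\cap\partial h(\ov x)$. Since subdifferentials are nonempty only at points of the domain, $\ov x\in\dom g\cap\dom h$. Hence $\ov x\in\stat(\gf)$.

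No step is a real obstacle. The one point to check is that Theorem~\ref{thm:reference-value-stationarity} gives convergence of the whole gradient sequence, not just a subsequence, so restricting to any cluster point's subsequence is legitimate.
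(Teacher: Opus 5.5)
Your proof is correct and follows the paper's argument: Theorem~\ref{thm:reference-value-stationarity} gives convergence of the full gradient sequence, and continuity of $\nabla\fgam{\gf}{p}{\gamma}$ then gives $\nabla\fgam{\gf}{p}{\gamma}(\ov s)=0$. The only difference is that the paper cites Theorem~\ref{thm:critical-lift-description} for $u(\ov s)=v(\ov s)\in\stat(\gf)$, whereas you cite Theorem~\ref{thm:crit-points}, which yields the same conclusion equally directly.
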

\begin{proof}
Let $s^{k_j}\to\ov s$. By Theorem~\ref{thm:reference-value-stationarity}, $\nabla\fgam{\gf}{p}{\gamma}(s^{k_j})\to 0.$ Since $\nabla\fgam{\gf}{p}{\gamma}$ is continuous, we obtain $\nabla\fgam{\gf}{p}{\gamma}(\ov s)=0$. Theorem~\ref{thm:critical-lift-description} then gives 
$u(\ov s)=v(\ov s)$ and $\ov x:=u(\ov s)=v(\ov s)\in\stat(\gf)$.
\end{proof}

%%%%%%%%
\begin{remark}[Relation with gradient descent and L-BFGS]
\label{rem:relation-gradient-lbfgs}
The preceding analysis is not tied to the steepest-descent direction. If $d^k=-\widetilde G_k$, then the method reduces to an inexact gradient method for
the difference of high-order Moreau envelopes. If $d^k=-H_k\widetilde G_k$ with uniformly bounded positive definite matrices $H_k$, then the method covers damped quasi-Newton-type
directions. In numerical experiments, one may use an L-BFGS direction as an acceleration strategy, provided that the accepted direction is safeguarded to
satisfy Assumption~\ref{ass:gradient-related-direction}. If the L-BFGS direction fails the descent test, the steepest-descent direction $-\widetilde G_k$ can be used
as a fallback.
\end{remark}

The preceding theorem assumes a stepsize sequence satisfying a uniform upper bound. In practice, such a bound is usually not known. We therefore record a line-search counterpart based on a finite-memory nonmonotone Armijo rule. The result shows that, under the same gradient-related direction condition and a relative oracle-error bound, the backtracking procedure is well-defined, the accepted stepsizes remain uniformly positive, and the generated sequence is asymptotically stationary.

\begin{theorem}[Asymptotic stationarity under nonmonotone Armijo line search]\label{thm:armijo}
Let Assumptions~\ref{ass:gradient-related-direction} and \ref{ass:descent-analysis} hold.
Let $\rho\in(0,1)$,
$\beta\in(0,1)$, and $\ov\alpha>0$. For a fixed memory length $m\in\Nz$, set
$M_k$ by \eqref{eq:M_k}. Given $s^k$, $\widetilde G_k$, and $d^k$, choose $\alpha_k$ as the first accepted element, in backtracking order, of the sequence $\{\beta^j\ov\alpha\}_{j\in \Nz}$ such that
\begin{equation}\label{eq:armijo-inexact-condition}
\fgam{\gf}{p}{\gamma}(s^k+\alpha_k d^k) \le M_k+\rho\alpha_k\inner{\widetilde G_k}{d^k}+\delta_k,
\end{equation}
where $\delta_k\ge0$ and $\sum_{k=0}^{+\infty}\delta_k<+\infty$.
Assume, in addition, that the oracle error satisfies the relative bound
$\|e^k\|\le \eta\|\widetilde G_k\|$ for all sufficiently large $k$,
where $0\le\eta<\frac{(1-\rho)\sigma}{\kappa}$.
Then the following statements hold.
\begin{enumerate}[label=(\textbf{\alph*}), font=\normalfont\bfseries, leftmargin=0.7cm]
\item \label{thm:armijo:a} The line search is well-defined for all sufficiently large $k$. 

\item \label{thm:armijo:b} There exists $\alpha_{\min}>0$ such that
$\alpha_k\ge \alpha_{\min}$ for all sufficiently large $k$.

\item \label{thm:armijo:c} If $\fgam{\gf}{p}{\gamma}$ is bounded from below, then
$\sum_{k=0}^{+\infty}\|\widetilde G_k\|^2<+\infty$, $\widetilde G_k\to 0$, $e^k\to 0$, and
$\nabla \fgam{\gf}{p}{\gamma}(s^k)\to 0$.

\item \label{thm:armijo:d} Every cluster point $\ov s$ of $\{s^k\}_{k\in \Nz}$ satisfies
$\nabla \fgam{\gf}{p}{\gamma}(\ov s)=0$. Hence, setting
$\ov x:=\prox{g}{\gamma}{p}(\ov s)=\prox{h}{\gamma}{p}(\ov s)$,
one has $\ov x\in\stat(\gf)$.
\end{enumerate}
\end{theorem}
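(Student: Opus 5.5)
The plan is the standard Armijo analysis, carried out relative to the true gradient $\nabla\fgam{\gf}{p}{\gamma}(s^k)=\widetilde G_k-e^k$ and using the descent lemma on $\mathcal B$ from Assumption~\ref{ass:descent-analysis}.

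Fix $k$ large enough that $\|e^k\|\le\eta\|\widetilde G_k\|$. If $\widetilde G_k=0$, then $d^k=0$ by Assumption~\ref{ass:gradient-related-direction}, so \eqref{eq:armijo-inexact-condition} holds trivially with $\alpha=\ov\alpha$. Otherwise, take any trial step $\alpha$ for which $[s^k,s^k+\alpha d^k]\subset\mathcal B$. The descent lemma together with Assumption~\ref{ass:gradient-related-direction} gives
\[
\fgam{\gf}{p}{\gamma}(s^k+\alpha d^k)\le \fgam{\gf}{p}{\gamma}(s^k)+\alpha\inner{\widetilde G_k}{d^k}+\alpha\kappa\eta\|\widetilde G_k\|^2+\tfrac{L}{2}\alpha^2\kappa^2\|\widetilde G_k\|^2 .
\]
Subtract $\rho\alpha\inner{\widetilde G_k}{d^k}$ and use $(1-\rho)\inner{\widetilde G_k}{d^k}\le-(1-\rho)\sigma\|\widetilde G_k\|^2$. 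The right-hand side is then at most $\fgam{\gf}{p}{\gamma}(s^k)+\rho\alpha\inner{\widetilde G_k}{d^k}$ whenever
\[
\alpha\le \hat\alpha:=\frac{2\bigl((1-\rho)\sigma-\kappa\eta\bigr)}{L\kappa^2}>0,
\]
and $\hat\alpha$ is positive precisely because $\eta<(1-\rho)\sigma/\kappa$. Since $\fgam{\gf}{p}{\gamma}(s^k)\le M_k$ and $\delta_k\ge0$, every such $\alpha$ satisfies \eqref{eq:armijo-inexact-condition}. Hence backtracking stops after finitely many reductions, which gives \ref{thm:armijo:a}. It also yields $\alpha_k\ge\alpha_{\min}:=\min\{\ov\alpha,\beta\hat\alpha\}$: either $\ov\alpha$ is accepted, or the last rejected trial exceeded $\hat\alpha$. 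This gives \ref{thm:armijo:b}.

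\textbf{Main obstacle.} The difficulty is that Assumption~\ref{ass:descent-analysis}\ref{ass:descent-analysis:bounded} only guarantees $[s^k,s^{k+1}]\subset\mathcal B$ for the accepted step, not for the trial points. I would interpret the assumption (or impose it explicitly) as covering all trial segments. Alternatively, I would use that these segments lie inside $[s^k,s^k+\ov\alpha d^k]$ and assume that this segment stays in a bounded set on which, by Proposition~\ref{prop:HOME-DC-gradient-regularity}, the gradient is Lipschitz since $p\ge2$.

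For \ref{thm:armijo:c}, combine acceptance with $\alpha_k\ge\alpha_{\min}$ to get
\[
\fgam{\gf}{p}{\gamma}(s^{k+1})\le M_k-\rho\sigma\alpha_{\min}\|\widetilde G_k\|^2+\delta_k .
\]
Then run the Grippo--Lampariello--Lucidi argument with perturbations. Set $\ell(k)$ to be the index attaining the maximum in \eqref{eq:M_k}. The sequence $M_k+\sum_{j\ge k}\delta_j$ is nonincreasing and bounded below, so $M_k$ converges to some $M^*$. Induction over the window of length $m+1$ then shows $\|\widetilde G_{\ell(k)-j}\|\to0$ and $\fgam{\gf}{p}{\gamma}(s^{\ell(k)-j})\to M^*$ for $j\le m+1$. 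This uses $\|s^{k+1}-s^k\|\le\ov\alpha\kappa\|\widetilde G_k\|$ and the uniform continuity of $\fgam{\gf}{p}{\gamma}$ on $\mathcal B$, and yields $\fgam{\gf}{p}{\gamma}(s^k)\to M^*$. To obtain the summability claim, I would instead use a telescoping bound over blocks of length $m+1$ via the per-step inequality, giving $\sum_k\|\widetilde G_k\|^2<\infty$. For $m=0$ this is immediate; for general $m$ it follows from the summability of $\|\widetilde G_{\ell(k)-j}\|^2$ along the block recursion. From $\widetilde G_k\to0$ we get $\|e^k\|\le\eta\|\widetilde G_k\|\to0$, and hence $\nabla\fgam{\gf}{p}{\gamma}(s^k)=\widetilde G_k-e^k\to0$.

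Part \ref{thm:armijo:d} then follows exactly as in Corollary~\ref{cor:cluster-points-stationary}: continuity of $\nabla\fgam{\gf}{p}{\gamma}$ gives $\nabla\fgam{\gf}{p}{\gamma}(\ov s)=0$, and Theorem~\ref{thm:crit-points} gives $\prox{g}{\gamma}{p}(\ov s)=\prox{h}{\gamma}{p}(\ov s)\in\stat(\gf)$.
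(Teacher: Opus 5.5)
\paragraph{Parts (a), (b), (d).} These follow the paper's proof: the same descent-lemma estimate with $\nabla\fgam{\gf}{p}{\gamma}(s^k)=\widetilde G_k-e^k$, the same threshold $\hat\alpha$, and the same appeal to continuity of the gradient and Theorem~\ref{thm:crit-points}. Your $\alpha_{\min}=\min\{\ov\alpha,\beta\hat\alpha\}$ is slightly sharper than the paper's $\beta\min\{\ov\alpha,\hat\alpha\}$. The trial-segment issue you flag is also implicit in the paper, which simply invokes ``the bounded set containing the relevant line segments''. Stating it as an assumption is fine, provided the bounded set, and hence $L$, is uniform in $k$.

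\paragraph{Part (c), what works.} You notice something the paper glosses over. Write $\varphi_k:=\fgam{\gf}{p}{\gamma}(s^k)$. The paper infers $\sum_k\|\widetilde G_k\|^2<\infty$ directly from $\varphi_{k+1}\le M_k-c\|\widetilde G_k\|^2+\delta_k$, but that only telescopes when $m=0$. Your GLL argument with uniform continuity does correctly give $M_k\to M^*$, $\widetilde G_k\to0$, $e^k\to0$ and $\nabla\fgam{\gf}{p}{\gamma}(s^k)\to0$.

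\paragraph{Part (c), the gap.} The summability claim for $m\ge1$, which you say follows ``along the block recursion'', does not follow as you have set it up.
\begin{itemize}
\item The recursion controls $\|\widetilde G_i\|^2$ only at $i=\ell(k)-1$, the index just before a window maximum.
\item To reach $i=\ell(k)-j$ with $j\ge2$, you must bound $\varphi_{\ell(k)}-\varphi_{\ell(k)-j+1}$.
\item Uniform continuity gives only $o(1)$ here, and Lipschitz continuity of $\fgam{\gf}{p}{\gamma}$ on $\mathcal B$ gives only $O(\|\widetilde G\|)$. Neither yields a summable bound, so this route gives convergence but not summability.
\end{itemize}

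The missing ingredient is the two-sided quadratic estimate
\[
|\varphi_{i+1}-\varphi_i|\le C\|\widetilde G_i\|^2,\qquad C:=\ov\alpha\kappa(1+\eta)+\tfrac{L}{2}\ov\alpha^{2}\kappa^{2}.
\]
It follows from the two-sided descent lemma on the accepted segment $[s^i,s^{i+1}]\subset\mathcal B$, together with $\|\nabla\fgam{\gf}{p}{\gamma}(s^i)\|\le(1+\eta)\|\widetilde G_i\|$, $\|d^i\|\le\kappa\|\widetilde G_i\|$ and $\alpha_i\le\ov\alpha$. Only accepted segments are needed here, so your trial-segment issue does not arise.

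This estimate makes the GLL induction quantitative. With $c=\rho\sigma\alpha_{\min}$,
\[
c\,\|\widetilde G_{\ell(k)-j}\|^2\le \bigl(M_{\ell(k)-j}-M_k\bigr)+C\sum_{i=1}^{j-1}\|\widetilde G_{\ell(k)-i}\|^2+\delta_{\ell(k)-j},\qquad j=1,\dots,m+1 .
\]
To sum this over $k$:
\begin{itemize}
\item Since $\ell(k)-j\ge k-2m-1$ and $M_{i+1}\le M_i+\delta_i$, the first term reduces to finitely many telescoping sums. These are bounded because $M_k\ge\inf\fgam{\gf}{p}{\gamma}$.
\item The $\delta$-contributions stay summable, since each $\delta_i$ is counted a bounded number of times.
\end{itemize}
Induction on $j$ then gives $\sum_k\|\widetilde G_{\ell(k)-j}\|^2<\infty$ for each $j$. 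Every index $i$ equals $\ell(i+m+1)-j$ for some $j\in\{1,\dots,m+1\}$, so $\sum_i\|\widetilde G_i\|^2<\infty$. This also makes your uniform-continuity step unnecessary.
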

\begin{proof}
\ref{thm:armijo:a}
Let $k$ be sufficiently large so that $\|e^k\|\le \eta\|\widetilde G_k\|$. Since $\nabla \fgam{\gf}{p}{\gamma}$ is Lipschitz continuous on the bounded set containing
the relevant line segments, the descent lemma gives, for every $\alpha>0$ such
that $[s^k,s^k+\alpha d^k]\subset B$,
\[
\fgam{\gf}{p}{\gamma}(s^k+\alpha d^k)\le\fgam{\gf}{p}{\gamma}(s^k)+\alpha\inner{\nabla \fgam{\gf}{p}{\gamma}(s^k)}{d^k}+\frac{L}{2}\alpha^2\|d^k\|^2.
\]
Using $\nabla \fgam{\gf}{p}{\gamma}(s^k)=\widetilde G_k-e^k$, we obtain
\[
\begin{aligned}
\fgam{\gf}{p}{\gamma}(s^k+\alpha d^k)&\le\fgam{\gf}{p}{\gamma}(s^k)+\alpha\inner{\widetilde G_k}{d^k}-\alpha\inner{e^k}{d^k}+\frac{L}{2}\alpha^2\|d^k\|^2\\
&=\fgam{\gf}{p}{\gamma}(s^k)+\rho \alpha\inner{\widetilde G_k}{d^k}+(1-\rho)\alpha\inner{\widetilde G_k}{d^k}-\alpha\inner{e^k}{d^k}+\frac{L}{2}\alpha^2\|d^k\|^2.
\end{aligned}
\]
By Assumption~\ref{ass:gradient-related-direction},
\[
(1-\rho)\alpha\inner{\widetilde G_k}{d^k}\le-(1-\rho)\alpha\sigma\norm{\widetilde G_k}^2,
\]
and
\[
\frac{L}{2}\alpha^2\norm{d^k}^2\le\frac{L}{2}\alpha^2\kappa^2\norm{\widetilde G_k}^2.
\]
Moreover,
\[
-\alpha\inner{e^k}{d^k}\le\alpha\norm{e^k}\norm{d^k}\le\alpha\eta\kappa\norm{\widetilde G_k}^2,
\]
i.e.,
\[
\fgam{\gf}{p}{\gamma}(s^k+\alpha d^k)\le\fgam{\gf}{p}{\gamma}(s^k)+\rho\alpha\inner{\widetilde G_k}{d^k}+\alpha\left[-(1-\rho)\sigma+\eta\kappa+\frac{L}{2}\alpha\kappa^2\right]\|\widetilde G_k\|^2.
\]
Hence, the Armijo condition \eqref{eq:armijo-inexact-condition} is satisfied
whenever
\[
0<\alpha\le\frac{2\big((1-\rho)\sigma-\eta\kappa\big)}{L\kappa^2},
\]
with the usual convention that the upper bound is $+\infty$ if $L=0$. Since
$M_k\ge \fgam{\gf}{p}{\gamma}(s^k)$ and $\delta_k\ge0$, and the backtracking sequence tends to zero, the backtracking procedure terminates.
\\
\ref{thm:armijo:b} By the construction of the backtracking rule,
\[
\alpha_k\ge\alpha_{\min}:=\beta\min\left\{\ov\alpha,\frac{2\big((1-\rho)\sigma-\eta\kappa\big)}{L\kappa^2}\right\}>0
\]
for all sufficiently large $k$.
\\
\ref{thm:armijo:c}
Using \eqref{eq:armijo-inexact-condition} and
$\inner{\widetilde G_k}{d^k}\le -\sigma\|\widetilde G_k\|^2$, we get
\[
\fgam{\gf}{p}{\gamma}(s^{k+1})\le M_k-\rho\sigma\alpha_k\|\widetilde G_k\|^2+\delta_k
\le M_k-c|\widetilde G_k|^2+\delta_k,
\]
where $c:=\rho\sigma\alpha_{\min}>0$. The summability of perturbations ${\delta_k}$
implies
\[
\sum_{k=0}^{+\infty}\|\widetilde G_k\|^2<+\infty.
\]
Therefore $\widetilde G_k\to 0$. The relative error condition gives $e^k\to 0$, and hence $\nabla \fgam{\gf}{p}{\gamma}(s^k)=\widetilde G_k-e^k\to 0$.
\\
\ref{thm:armijo:d}
If $\ov s$ is a cluster point, continuity of $\nabla \fgam{\gf}{p}{\gamma}$ gives 
$\nabla \fgam{\gf}{p}{\gamma}(\ov s)=0$. The final statement follows from the criticality
correspondence between HOME-DC critical centers and DC-stationary points.
\end{proof}
%%%%%%%%%%%%%%%%%%%%%%%%%%%%%%%%%%%%%%%%%%%%%%%%%%%%%%%%%%%%%%%%%%%%%%%%%%%%%%%%%%%%%%%%%%%%%%%%%

%%%%%%%%%%%%%%%%%%%%%%%%%%%%%%%%%%%%%%%%%%%%%%%%%%%%%%%%%%%%%%%%%%%%%%%%%%%%%%%%%%%%%%%%%%%%%%%%%%%%%%%%%%%%%%%%%%%%%%%%%%%%%%%%%%%%%%%%%%%%%%%%%%%%
%%% End Section: Inexact high-order two-prox methods %%%%%%%%%%%%%%%%%%%%%%%%%%%%%%%%%%%%%%%%%%%%%%%%%%%%%%%%%%%%%%%%%%%%%%%%%%%%%%%%%%%%%%%%%%%%%%%%%

%%%%%%%%%%%%%%%%%%%%%%%%%%%%%%%%%%%%%%%%%%%%%%%%%%%%%%%%%%%%%%%%%%%%%%%%%%%%%%%%%%%%%%%%%%%%%%%%%%%%%%%%%%%%%%%%%%%%%%%%%%%%%%%%%%%%%%%%%%%%%%%%%%%%
%%% Section: Residual-based inexactness %%%%%%%%%%%%%%%%%%%%%%%%%%%%%%%%%%%%%%%%%%%%%%%%%%%%%%%%%%%%%%%%%%%%%%%%%%%%%%%%%%%%%%%%%%%%%%%%%
\section{Residual-Based Inexactness for Lower-Level Subproblems}\label{sec:residual}
In this section, we show how the abstract envelope-gradient error conditions in Section~\ref{sec:algorithm} can be enforced through computable residuals of the two lower-level high-order proximal subproblems.

Let $\psi:\R^n\to\Rinf$ be proper, lsc, and convex. For a given center $s\in\R^n$, the exact high-order proximal point
\[
    x=\prox{\psi}{\gamma}{p}(s)
\]
is characterized by
\[
    0\in \partial\psi(x)+\frac{1}{\gamma} J_p(x-s).
\]
Thus, if $\widetilde x$ is an approximate solution, a natural residual is any vector
$\eta$ that satisfies
\begin{equation}\label{eq:lower-residual-convention}
\eta\in \partial\psi(\widetilde x)+\frac{1}{\gamma} J_p(\widetilde x-s).
\end{equation}
Thus $\eta=0$ is precisely the exact optimality condition for the lower-level proximal subproblem.

The following estimate converts this residual bound into a bound on the distance between the approximate and exact proximal points.

\begin{lemma}[Residual controls the proximal-point error]\label{lem:residual-distance}
Let $p\ge 2$, $\gamma>0$, and let $\gh:\R^n\to\Rinf$ be proper, lsc, and convex.
Suppose that $x=\prox{\gh}{\gamma}{p}(s)$ and that $\widetilde x\in\R^n$ and $\eta\in\R^n$ satisfy
\eqref{eq:lower-residual-convention}.
Then there exists a constant $C_{p,\gamma}>0$, depending only on $p$ and $\gamma$, such that
\[
    \|\widetilde x-x\| \le C_{p,\gamma}\|\eta\|^{1/(p-1)}.
\]
\end{lemma}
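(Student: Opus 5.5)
The approach combines monotonicity of $\partial\gh$ with the uniform convexity of $J_p$ for $p\ge 2$.

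By Lemma~\ref{lem:optimality}, the exact point satisfies $-\frac1\gamma J_p(x-s)\in\partial\gh(x)$. By \eqref{eq:lower-residual-convention}, the approximate point satisfies $\eta-\frac1\gamma J_p(\widetilde x-s)\in\partial\gh(\widetilde x)$. Monotonicity of $\partial\gh$ then gives
\[
\Bigl\langle \eta-\tfrac1\gamma\bigl(J_p(\widetilde x-s)-J_p(x-s)\bigr),\,\widetilde x-x\Bigr\rangle\ge 0,
\]
that is,
\[
\bigl\langle J_p(a)-J_p(b),a-b\bigr\rangle\le\gamma\langle\eta,\widetilde x-x\rangle\le \gamma\|\eta\|\,\|\widetilde x-x\|,
\]
where $a:=\widetilde x-s$, $b:=x-s$, and $a-b=\widetilde x-x$.

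Next I need a lower bound of order $\|a-b\|^p$ on the left-hand side. Lemma~\ref{lem:findlowbounknu:lemma}\ref{lem:findlowbounknu:lemma:e1} gives $a_p M^{p-2}\|a-b\|^2$ with $M=\max\{\|a\|,\|b\|\}$. For $p\ge2$, the triangle inequality gives $\|a-b\|\le 2M$, so $M^{p-2}\ge 2^{2-p}\|a-b\|^{p-2}$. This is the only point where $p\ge2$ is used: for $p<2$ the inequality would go the wrong way. Consequently
\[
\langle J_p(a)-J_p(b),a-b\rangle\ge a_p2^{2-p}\|a-b\|^p,
\]
which is the standard $p$-uniform monotonicity of $J_p$.

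Combining the two estimates yields $a_p2^{2-p}\|\widetilde x-x\|^p\le\gamma\|\eta\|\,\|\widetilde x-x\|$. If $\widetilde x=x$ there is nothing to prove. Otherwise, divide by $\|\widetilde x-x\|$ and take the $(p-1)$-th root to obtain
\[
\|\widetilde x-x\|\le\Bigl(\frac{2^{p-2}\gamma}{a_p}\Bigr)^{1/(p-1)}\|\eta\|^{1/(p-1)},
\]
so one may take $C_{p,\gamma}:=\bigl(2^{p-2}\gamma/a_p\bigr)^{1/(p-1)}$.

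The only delicate step is converting the $M^{p-2}$-weighted bound of Lemma~\ref{lem:findlowbounknu:lemma} into a pure power $\|a-b\|^p$. This is exactly where the hypothesis $p\ge 2$ enters, and the conversion is the elementary estimate $M\ge\|a-b\|/2$. The case $a=b$ is handled separately.
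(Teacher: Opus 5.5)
Your proof is correct and follows the paper's argument. Both use the optimality condition together with the residual inclusion, then monotonicity of $\partial\psi$ for the two subgradients, then $p$-uniform monotonicity of $J_p$, and finally division by $\|\widetilde x-x\|$. The one difference is that the paper cites Nesterov's Lemma~4.2.3 for $\langle J_p(a)-J_p(b),a-b\rangle\ge c_p\|a-b\|^p$. You instead derive it from part (a) of Lemma~\ref{lem:findlowbounknu:lemma} via $M\ge\|a-b\|/2$ and $p\ge 2$, which keeps the argument self-contained and gives the explicit constant $c_p=2^{2-p}a_p$.
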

\begin{proof}
By Lemma~\ref{lem:optimality}, it holds that
\[
    -\frac{1}{\gamma} J_p(x-s)\in \partial \gh(x).
\]
Choose $\xi\in\partial \gh(x)$ and $\widetilde\xi\in\partial \gh(\widetilde x)$ such that
\[
    \xi+\frac{1}{\gamma} J_p(x-s)=0,
    \quad
    \widetilde\xi+\frac{1}{\gamma} J_p(\widetilde x-s)=\eta.
\]
Subtracting the first identity from the second yields
\[
    (\widetilde\xi-\xi)+\frac{1}{\gamma}\bigl(J_p(\widetilde x-s)-J_p(x-s)\bigr)=\eta.
\]
Taking the inner product with $\widetilde x-x$ gives
\[
    \inner{\widetilde\xi-\xi}{\widetilde x-x}+\frac{1}{\gamma}
    \inner{J_p(\widetilde x-s)-J_p(x-s)}{\widetilde x-x}
    =\inner{\eta}{\widetilde x-x}.
\]
Since $\partial \gh$ is monotone, $\langle\widetilde\xi-\xi , \widetilde x-x\rangle\ge 0$, i.e.,
\begin{equation}\label{eq:mono-step}
    \frac{1}{\gamma}\inner{J_p(\widetilde x-s)-J_p(x-s)}{\widetilde x-x}
    \le\norm{\eta}\,\norm{\widetilde x-x}.
\end{equation}
For $p\ge 2$, the map $J_p$ is $p$-uniformly monotone (see \cite[Lemma~4.2.3]{Nesterov2018}), i.e, there exists $c_p>0$ such that
\[
    \inner{J_p(a)-J_p(b)}{a-b}\ge c_p\norm{a-b}^p, \qquad \forall a,b\in\R^n.
\]
Applying this with $a=\widetilde x-s$ and $b=x-s$, and combining with \eqref{eq:mono-step}, we obtain
\[
    \frac{c_p}{\gamma}\norm{\widetilde x-x}^p\le \norm{\eta}\,\norm{\widetilde x-x}.
\]
If $\widetilde x=x$, the claim is immediate. Otherwise, dividing by $\|\widetilde x-x\|$ gives
\[
    \norm{\widetilde x-x}^{p-1}\le \frac{\gamma}{c_p}\norm{\eta},
\]
which shows our desired result.
\end{proof}

\begin{lemma}[Residuals control the envelope-gradient error]\label{lem:residual-gradient}
Let $g,h$ satisfy Assumption~\ref{ass:basic}~\ref{ass:basic:a}. Let $p\ge2$, $\gamma>0$, and let $B\subseteq\R^n$ be bounded.
Then there exists a constant $C_{B,p,\gamma}>0$ such that 
if $u=\prox{h}{\gamma}{p}(s)$ and $v=\prox{g}{\gamma}{p}(s)$, with $s,u,v,\widetilde u,\widetilde v\in B$, satisfy
\[
    \eta_h\in \partial h(\widetilde u)+\frac{1}{\gamma} J_p(\widetilde u-s),
    \qquad \eta_g\in \partial g(\widetilde v)+\frac{1}{\gamma} J_p(\widetilde v-s),
\]
then
\begin{equation}\label{eq:residual-gradient-transfer}
    \left\|\frac{1}{\gamma}\Bigl(J_p(s-\widetilde v)-J_p(s-\widetilde u)\Bigr)
        -\frac{1}{\gamma}\Bigl(J_p(s-v)-J_p(s-u)\Bigr)\right\|
    \le C_{B,p,\gamma}\Bigl(\norm{\eta_g}^{1/(p-1)}+\norm{\eta_h}^{1/(p-1)}\Bigr).
\end{equation}
\end{lemma}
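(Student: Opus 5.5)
The plan is to split the gradient-error vector into its $g$-part and its $h$-part, and then control each part with Lemma~\ref{lem:findlowbounknu:lemma}~\ref{lem:findlowbounknu:lemma:e3} together with Lemma~\ref{lem:residual-distance}. The triangle inequality gives
\[
\left\|\tfrac{1}{\gamma}\bigl(J_p(s-\widetilde v)-J_p(s-\widetilde u)\bigr)-\tfrac{1}{\gamma}\bigl(J_p(s-v)-J_p(s-u)\bigr)\right\|
\le \tfrac{1}{\gamma}\|J_p(s-\widetilde v)-J_p(s-v)\|+\tfrac{1}{\gamma}\|J_p(s-\widetilde u)-J_p(s-u)\|.
\]
The two terms have the same structure, so it suffices to bound the $g$-term by a constant times $\|\eta_g\|^{1/(p-1)}$ and to repeat the argument verbatim for $h$.

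\textbf{Step 1: local Lipschitz bound on $J_p$.} Since $B$ is bounded, set $R:=\sup_{z\in B}\|z\|<\infty$. Because $s,v,\widetilde v\in B$, the vectors $s-v$ and $s-\widetilde v$ have norm at most $2R$. If $s-\widetilde v\neq s-v$, Lemma~\ref{lem:findlowbounknu:lemma}~\ref{lem:findlowbounknu:lemma:e3} gives
\[
\|J_p(s-\widetilde v)-J_p(s-v)\|\le b_p\bigl(\max\{\|s-\widetilde v\|,\|s-v\|\}\bigr)^{p-2}\|\widetilde v-v\|.
\]
Since $p\ge2$, the exponent $p-2$ is nonnegative, so the maximum factor is at most $(2R)^{p-2}$. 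When $p=2$ this factor is simply $1$. The case $\widetilde v=v$ is trivial.

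\textbf{Step 2: transfer the residual to a distance.} Apply Lemma~\ref{lem:residual-distance} with $\gh=g$, $x=v$, $\widetilde x=\widetilde v$ and $\eta=\eta_g$. The hypothesis $\eta_g\in\partial g(\widetilde v)+\frac1\gamma J_p(\widetilde v-s)$ is exactly \eqref{eq:lower-residual-convention}, so
\[
\|\widetilde v-v\|\le C_{p,\gamma}\|\eta_g\|^{1/(p-1)}.
\]
Doing the same for $h$ with $\eta_h$ gives the matching bound for $\|\widetilde u-u\|$.

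\textbf{Step 3: combine.} Putting Steps 1 and 2 together, each term is bounded by $\frac{b_p}{\gamma}(2R)^{p-2}C_{p,\gamma}$ times the corresponding residual power. Hence the lemma holds with $C_{B,p,\gamma}:=\frac{b_p}{\gamma}(2R)^{p-2}C_{p,\gamma}$. The constant depends only on $B$, $p$ and $\gamma$, as required.

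No step here is a real obstacle. The only delicate point is Step 1: the factor $\max\{\cdot\}^{p-2}$ must stay bounded, and this is exactly where $p\ge2$ and the boundedness of $B$ are used. If the maximum is $0$ then both vectors vanish and the estimate is trivial. For $1<p<2$ this factor could blow up near the origin, so one would have to use the Hölder estimate of Remark~\ref{rem:Jp-regularity} instead; that case is excluded by the hypothesis $p\ge2$.
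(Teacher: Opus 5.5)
Your proof is correct and follows the paper's argument: you split the error with the triangle inequality into the $g$- and $h$-parts, turn each residual into a distance bound via Lemma~\ref{lem:residual-distance}, and use that $J_p$ is Lipschitz on the bounded set $B-B$ when $p\ge2$. The only difference is that you make the Lipschitz constant explicit as $b_p(2R)^{p-2}$ through Lemma~\ref{lem:findlowbounknu:lemma}~\ref{lem:findlowbounknu:lemma:e3}, whereas the paper simply asserts Lipschitz continuity on $B-B$ (to keep $C_{B,p,\gamma}>0$ in the degenerate case $R=0$, $p>2$, use $\max\{R,1\}$ in place of $R$).
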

\begin{proof}
By Lemma~\ref{lem:residual-distance}, there exists a constant $C_{p,\gamma}>0$ such that
\[
    \norm{\widetilde v-v}\le C_{p,\gamma}\norm{\eta_g}^{1/(p-1)},
    \qquad
    \norm{\widetilde u-u}\le C_{p,\gamma}\norm{\eta_h}^{1/(p-1)}.
\]
Since $B$ is bounded and $p\ge 2$, the map $J_p$ is Lipschitz continuous on $B-B$.
Hence, there exists $L_{B,p}>0$ such that
\[
    \norm{J_p(s-\widetilde v)-J_p(s-v)}\le L_{B,p}\norm{\widetilde v-v},
\]
and
\[
    \norm{J_p(s-\widetilde u)-J_p(s-u)}\le L_{B,p}\norm{\widetilde u-u}.
\]
Therefore
\begin{align*}
    &\left\|\frac{1}{\gamma}\Bigl(J_p(s-\widetilde v)-J_p(s-\widetilde u)\Bigr)
        -\frac{1}{\gamma}\Bigl(J_p(s-v)-J_p(s-u)\Bigr)\right\| \\
    &\quad\quad\quad\quad\le\frac{1}{\gamma}\norm{J_p(s-\widetilde v)-J_p(s-v)}
    +\frac{1}{\gamma}\norm{J_p(s-\widetilde u)-J_p(s-u)} \\
    &\quad\quad\quad\quad\le \frac{L_{B,p} C_{p,\gamma}}{\gamma}
    \Bigl(\norm{\eta_g}^{1/(p-1)}+\norm{\eta_h}^{1/(p-1)}\Bigr),
\end{align*}
which proves \eqref{eq:residual-gradient-transfer}.
\end{proof}

\begin{corollary}[Practical summable inexactness criterion]\label{cor:practical-inexactness}
Assume the hypotheses of Lemma~\ref{lem:residual-gradient}, and suppose that for each $k\in\Nz$, the
approximate proximal points $\widetilde u^k,\widetilde v^k$ satisfy
\[
    \eta_h^k\in \partial h(\widetilde u^k)+\frac{1}{\gamma} J_p(\widetilde u^k-s^k),
    \qquad
    \eta_g^k\in \partial g(\widetilde v^k)+\frac{1}{\gamma} J_p(\widetilde v^k-s^k).
\]
If
\begin{equation}\label{eq:practical-inexactness}
    \sum_{k=0}^\infty
    \left(\norm{\eta_g^k}^{2/(p-1)}+\norm{\eta_h^k}^{2/(p-1)}\right)
    <\infty,
\end{equation}
then 
\[
    \sum_{k=0}^{+\infty}\|e^k\|^2<+\infty.
\]
Consequently, provided the remaining assumptions of Theorem~\ref{thm:reference-value-stationarity} and Corollary~\ref{cor:cluster-points-stationary} hold, their conclusions apply.
\end{corollary}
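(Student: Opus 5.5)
The plan is to apply Lemma~\ref{lem:residual-gradient} at each iteration $k$ with $s=s^k$, $u=u^k:=\prox{h}{\gamma}{p}(s^k)$, $v=v^k:=\prox{g}{\gamma}{p}(s^k)$, $\widetilde u=\widetilde u^k$, $\widetilde v=\widetilde v^k$, and the residuals $\eta_h^k,\eta_g^k$. By the hypotheses of Lemma~\ref{lem:residual-gradient}, all these points lie in a fixed bounded set $B$. The left-hand side of \eqref{eq:residual-gradient-transfer} is then exactly $\|\widetilde G_k-\nabla\fgam{\gf}{p}{\gamma}(s^k)\|=\|e^k\|$. This follows from the definition \eqref{eq:approx-gradient} of $\widetilde G_k$ and the gradient formula \eqref{eq:formulaHiDCdiff} of Proposition~\ref{prop:HOME-DC-gradient-regularity}. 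Hence, with a single constant $C:=C_{B,p,\gamma}$ independent of $k$, we get
\[
\|e^k\|\le C\Bigl(\norm{\eta_g^k}^{1/(p-1)}+\norm{\eta_h^k}^{1/(p-1)}\Bigr),\qquad \forall k\in\Nz.
\]

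Next, we square this bound and use $(a+b)^2\le 2a^2+2b^2$:
\[
\|e^k\|^2\le 2C^2\Bigl(\norm{\eta_g^k}^{2/(p-1)}+\norm{\eta_h^k}^{2/(p-1)}\Bigr).
\]
Summing over $k$ and invoking \eqref{eq:practical-inexactness} yields $\sum_k\|e^k\|^2<+\infty$. This is precisely Assumption~\ref{ass:descent-analysis}~\ref{ass:descent-analysis:error}. The final sentence of the corollary then follows by plugging this into Theorem~\ref{thm:reference-value-stationarity} and Corollary~\ref{cor:cluster-points-stationary}, whose other hypotheses are assumed.

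The only delicate point is uniformity of the constant. It must not depend on $k$, which requires that the same bounded set $B$ contain $s^k,u^k,v^k,\widetilde u^k,\widetilde v^k$ for every $k$. I read this as part of ``the hypotheses of Lemma~\ref{lem:residual-gradient}''. If one instead wanted to derive it, the route would be as follows. Boundedness of $\{s^k\}$ comes from \ref{ass:descent-analysis:bounded}. Boundedness of $u^k,v^k$ follows from the Lipschitz estimate of Theorem~\ref{th:home:lip}: $\|u^k\|\le\|\prox{h}{\gamma}{p}(0)\|+L_p\|s^k\|$. Boundedness of $\widetilde u^k,\widetilde v^k$ follows from Lemma~\ref{lem:residual-distance} together with $\eta^k\to0$, which is implied by summability. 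Beyond this, the argument is routine.
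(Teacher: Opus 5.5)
Your proposal is correct and matches the paper's proof: apply Lemma~\ref{lem:residual-gradient} at each $s^k$ to get $\|e^k\|\le C\bigl(\|\eta_g^k\|^{1/(p-1)}+\|\eta_h^k\|^{1/(p-1)}\bigr)$, square using $(a+b)^2\le 2(a^2+b^2)$, and sum over $k$. You also point out that the constant must be uniform in $k$, i.e.\ a single bounded set $B$ must contain all $s^k,u^k,v^k,\widetilde u^k,\widetilde v^k$; the paper leaves this implicit, and your sketch deriving it from \ref{ass:descent-analysis:bounded}, Theorem~\ref{th:home:lip}, and Lemma~\ref{lem:residual-distance} is sound.
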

\begin{proof}
By Lemma~\ref{lem:residual-gradient}, there exists a constant $C>0$ such that
\[
    \norm{e^k}\le C\Bigl(\norm{\eta_g^k}^{1/(p-1)}+\norm{\eta_h^k}^{1/(p-1)}\Bigr).
\]
Squaring, using $(a+b)^2\le2(a^2+b^2)$, and summing over $k$ gives the result.
\end{proof}

\begin{corollary}[Residual-based recovery for $p\ge 2$]\label{cor:residual-recovery}
Assume the hypotheses of Theorem~\ref{thm:inexact-primal-recovery}, and suppose $p\ge 2$.
For every $k\in\Nz$, let the approximate proximal points satisfy
\[
    \eta_h^k\in \partial h(\widetilde u^k)+\frac{1}{\gamma} J_p(\widetilde u^k-s^k),
    \quad
    \eta_g^k\in \partial g(\widetilde v^k)+\frac{1}{\gamma} J_p(\widetilde v^k-s^k),
\]
with $\norm{\eta_h^k}\to 0$ and $\norm{\eta_g^k}\to 0$.
Then
\[
    \dist(\widetilde u^k,\mathcal X^\star)\to 0,
    \quad \dist(\widetilde v^k,\mathcal X^\star)\to 0.
\]
\end{corollary}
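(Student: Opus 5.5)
The plan is to reduce the statement to Theorem~\ref{thm:inexact-primal-recovery}. That theorem needs only two things: $\dist(s^k,S_\star)\to0$, which is already assumed, and the vanishing-error condition \eqref{eq:prox-approx-vanish}. So the only work is to show $\norm{\widetilde u^k-u^k}\to0$ and $\norm{\widetilde v^k-v^k}\to0$ from the residual hypotheses.

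For this I will apply Lemma~\ref{lem:residual-distance} twice at each iteration $k$, using the center $s=s^k$.
\begin{itemize}
\item With $\gh=h$, $x=u^k=\prox{h}{\gamma}{p}(s^k)$, $\widetilde x=\widetilde u^k$ and $\eta=\eta_h^k$, it gives $\norm{\widetilde u^k-u^k}\le C_{p,\gamma}\norm{\eta_h^k}^{1/(p-1)}$.
\item With $\gh=g$, $x=v^k$, $\widetilde x=\widetilde v^k$ and $\eta=\eta_g^k$, it gives $\norm{\widetilde v^k-v^k}\le C_{p,\gamma}\norm{\eta_g^k}^{1/(p-1)}$.
\end{itemize}
Here $p\ge2$ is what the lemma requires. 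The constant $C_{p,\gamma}$ depends only on $p$ and $\gamma$, so it is independent of $k$; this uniformity is the one point to check. Unlike Lemma~\ref{lem:residual-gradient}, no boundedness of the iterates is needed.

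Since $p>1$, the exponent $1/(p-1)$ is positive, so $t\mapsto t^{1/(p-1)}$ is continuous at $0$. Hence $\norm{\eta_h^k}\to0$ and $\norm{\eta_g^k}\to0$ give \eqref{eq:prox-approx-vanish}.

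With \eqref{eq:prox-approx-vanish} in hand, all hypotheses of Theorem~\ref{thm:inexact-primal-recovery} hold, and its conclusion \eqref{eq:dist-utilde-min} is exactly the claim. I expect no real obstacle here; the only subtlety is the uniformity of the constant, which Lemma~\ref{lem:residual-distance} already provides.
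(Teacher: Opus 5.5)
Your proposal is correct and follows the paper's proof: apply Lemma~\ref{lem:residual-distance} with the $k$-independent constant $C_{p,\gamma}$ to obtain $\norm{\widetilde u^k-u^k}\to0$ and $\norm{\widetilde v^k-v^k}\to0$ from the vanishing residuals, then invoke Theorem~\ref{thm:inexact-primal-recovery}. Your observation that no boundedness of the iterates is needed here, in contrast to Lemma~\ref{lem:residual-gradient}, is also accurate.
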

\begin{proof}
By Lemma~\ref{lem:residual-distance},
\[
    \|\widetilde u^k-u^k\| \le C_{p,\gamma}\|\eta_h^k\|^{1/(p-1)}\to 0,
    \qquad
    \|\widetilde v^k-v^k\|  \le C_{p,\gamma}\|\eta_g^k\|^{1/(p-1)}\to 0.
\]
The conclusion follows from Theorem~\ref{thm:inexact-primal-recovery}.
\end{proof}

\begin{remark}[Interpretation of the residual rule]
\label{rem:interpretation-residual-rule}
Corollary~\ref{cor:practical-inexactness} provides the basis for an implementable stopping rule for lower-level proximal subproblems. It shows that the proximal
 residuals need not vanish faster than required by the summability condition. Instead, it is enough to impose the summability condition
\[
\sum_{k=0}^{+\infty} \left(\|\eta_g^k\|^{2/(p-1)}+\|\eta_h^k\|^{2/(p-1)}\right)<+\infty.
\]
For $p=2$, this reduces to the square summability of the lower-level residuals. For $p>2$, the exponent $2/(p-1)$ reflects the nonlinear high-order proximal regularization and is one of the main differences from the quadratic difference-of-Moreau-envelopes setting.
\end{remark}

\begin{remark}[Connection with the numerical implementation]\label{rem:connection-numerical-lbfgs}
The numerical section may use L-BFGS to generate the search direction $d^k$ from the approximate gradient $\widetilde G_k$. The present analysis does not require
the direction to be exactly the negative gradient. It requires only the gradient-related conditions in Assumption~\ref{ass:gradient-related-direction} and
the summable gradient-error condition verified by Corollary~\ref{cor:practical-inexactness}. Thus, L-BFGS should be
understood as a practical acceleration strategy within the inexact gradient-related framework. A safeguarded implementation can enforce the required descent condition by rejecting an L-BFGS direction that fails the inexact descent condition and using $-\widetilde G_k$ instead.
\end{remark}
%%%%%%%%%%%%%%%%%%%%%%%%%%%%%%%%%%%%%%%%%%%%%%%%%%%%%%%%%%%%%%%%%%%%%%%%%%%%%%%%%%%%%%%%%%%%%%%%%%%%%%%%%%%%%%%%%%%%%%%%%%%%%%%%%%%%%%%%%%%%%%%%%%%%
%%% End Section: Residual-based inexactness %%%%%%%%%%%%%%%%%%%%%%%%%%%%%%%%%%%%%%%%%%%%%%%%%%%%%%%%%%%%%%%%%%%%%%%%%%%%%%%%%%%%%%%%%%%%%%%%%

%%%%%%%%%%%%%%%%%%%%%%%%%%%%%%%%%%%%%%%%%%%%%%%%%%%%%%%%%%%%%%%%%%%%%%%%%%%%%%%%%%%%%%%%%%%%%%%%%
%%%%%%%%%%%%%%%%%%%%%%%%%%%%%%%%%%%%%%%%%%%%%%%%%%%%%%%%%%%%%%%%%%%%%%%%%%%%%%%%%%%%%%%%%%%%%%%%%
\section{Preliminary Numerical Experiments}\label{sec:numerical}
In this section, we investigate the numerical performance of IDEA and compare it with the classical DCA. Since the primary focus of this paper is theoretical, we do not aim to provide an extensive numerical study; rather, we present a limited set of experiments to illustrate the practical behavior of the proposed method.

As a benchmark problem, we consider the clustering problem of partitioning a dataset
\[
A=\{a_1,\ldots,a_m\}\subset\mathbb{R}^n
\]
into \(k\) clusters. Each cluster is represented by a center \(x_j \in \mathbb{R}^n\), \(j=1,\ldots,k\), and each data point is assigned to the nearest center. A standard formulation introduces binary assignment variables
$u_{ij}\in\{0,1\}$, 
where \(u_{ij}=1\) if the data point \(a_i\) is assigned to the cluster center \(x_j\), and \(u_{ij}=0\) otherwise. The corresponding mixed-integer optimization problem is
\[
\begin{aligned}
\min_{x,u}\quad &
\frac{1}{m}\sum_{i=1}^{m}\sum_{j=1}^{k}
u_{ij}\|x_j-a_i\| \\
\text{s.t.}\quad &
\sum_{j=1}^{k}u_{ij}=1,
\qquad i=1,\ldots,m,\\
&
u_{ij}\in\{0,1\},
\qquad i=1,\ldots,m,\;\; j=1,\ldots,k,
\end{aligned}
\]
In conventional clustering, the squared Euclidean norm is typically used to measure the distance between data points and cluster centers due to its smoothness and strong convexity properties, facilitating efficient optimization. In contrast, we adopt the Euclidean norm, leading to a more robust formulation that is less sensitive to outliers and thus better preserves the structure of data in the presence of noise or aberrant observations.

For any fixed collection of centers
$X=(x_1,\ldots,x_k)$, 
the optimal assignment is obtained by assigning each point \(a_i\) to its nearest center. Consequently, as shown in \cite{bock1998clustering, ordin2015heuristic}, the binary assignment variables can be eliminated explicitly, which yields the equivalent unconstrained formulation,
\begin{align}\label{CP}
\min\
\tfrac{1}{m}\sum_{i=1}^{m}
\min_{j=1,\ldots,k}\|x_j-a_i\|,
\tag{CP}
\end{align}
which has the same optimal value and optimal cluster centers as the mixed-integer formulation.
 Although the two formulations are equivalent in the sense that they share the same optimal value and optimal cluster centers, the latter avoids the combinatorial binary variables and reveals the intrinsic nonsmooth and nonconvex structure of the clustering problem. Furthermore, it admits a natural DC decomposition, making it particularly suitable for evaluating the performance of the proposed algorithm and comparing it with the classical DCA.
\[
f_i(X):=\mint{j=1,\ldots,k} \|x_j-a_i\|.
\]
Using the identity
\[
\mint{j=1,\ldots,k} t_j=\sum_{j=1}^k t_j-\maxt{r=1,\ldots,k}
\sum_{\substack{j=1 \\ j\neq r}}^k t_j,
\]
we obtain
\[
f_i(X)=\sum_{j=1}^k \|x_j-a_i\|-\maxt{r=1,\ldots,k}\sum_{\substack{j=1 \\ j\neq r}}^k \|x_j-a_i\|.
\]
Therefore, problem \eqref{CP} can be written as the DC program
\begin{equation}
\mint{X\in\mathbb{R}^{n\times k}}\; g(X)-h(X), \quad
g(X)=\frac{1}{m}\sum_{i=1}^m\sum_{j=1}^k \|x_j-a_i\|, \quad
h(X)=\frac{1}{m}\sum_{i=1}^m\maxt{r=1,\ldots,k}\sum_{\substack{j=1 \\ j\neq r}}^k \|x_j-a_i\|.
\label{eq:dc_clustering}
\end{equation}
where both $g$ and $h$ are proper, closed, and convex; hence, \eqref{eq:dc_clustering} is a valid DC decomposition of the clustering problem.
%%%%%%%%%%%%%%%%%%%%%%%%%%%%%%%%%%%%%%%%%%%%%%%%%%%%%%%%%%%%%%%%%%%%%%%

To evaluate the performance and robustness of the proposed envelope-based algorithm, we conducted numerical experiments on six real-world datasets sourced from the \textit{scikit-learn} Python library \cite{pedregosa2011scikit}. These datasets exhibit diverse characteristics in terms of dimensionality ($n$), number of samples ($m$), and spatial structure, thereby providing a comprehensive benchmark for evaluating clustering performance. A summary of the datasets is presented in Table~\ref{tab:datasets}.

\vspace{-5mm}
\begin{table}[htbp]
\centering
\caption{Summary of datasets}
\label{tab:datasets}
\begin{tabular}{lcc}
\toprule
\textbf{Dataset} & \textbf{Samples ($m$)} & \textbf{Features ($n$)} \\
\midrule
Breast Cancer & 569   & 30 \\
Diabetes      & 442   & 10 \\
Digits        & 1,797 & 64 \\
Iris          & 150   & 4  \\
Linnerud      & 20    & 3  \\
Wine          & 178   & 13 \\
\bottomrule
\end{tabular}
\end{table}

We compared the DCA with the proposed high-order Moreau envelope smoothing approach. The minimization of the envelope function was carried out using the L-BFGS algorithm. To investigate the influence of the smoothing parameter, we considered three values of the exponent parameter, namely $p \in \{1.5, 2.0, 3.0\}$, while fixing the step-size parameter at $\gamma = 1$ in all experiments. 

It is important to note that the boosted version of DCA is not applicable in this setting, as it was originally developed under the assumption that the function $g$ is smooth \cite{aragon2020boosted}, which does not hold for the problem considered here.

To ensure a fair and consistent comparison, all algorithms were initialized from the exact same starting centers, $X_0 \in \mathbb{R}^{n \times k}$, which were generated randomly from a normal distribution scaled by the variance of the respective dataset. The experiments were repeated for different numbers of target clusters, specifically $k \in \{4, 7, 10\}$. Moreover, all numerical experiments were performed on a Windows laptop with a 12th Gen Intel Core i7-12700H CPU (2.30 GHz) and 32 GB RAM.

For the considered problem, neither the DCA subproblems nor the high-order Moreau envelope subproblems admit closed-form solutions. For the standard DCA, the convex subproblem at each iteration was solved utilizing the BFGS optimizer. For the proposed envelope method, computing the objective function and its gradient requires evaluating the proximal operators for $g$ and $h$. These nested optimization problems were also solved using the standard BFGS algorithm. Both the DCA subproblem and the proposed envelope method were set to terminate when the absolute difference in the objective value between two consecutive iterations fell below a tolerance of $\epsilon = 10^{-6}$. All implementation codes are publicly available in the GitHub repository, \url{https://github.com/molsemzamani/Home_DCA}.

\vspace{-5mm}
\subsection{{\bf Overview of Efficiency and Performance}}
Table \ref{tab:numerical_results} summarizes the final optimal objective values, the number of outer iterations, and the total computational runtime (in seconds) for each configuration. 

\textbf{Solution Quality:}  The most notable advantage of the proposed envelope method is its robust capacity to escape poor local minima. The clustering formulation \eqref{CP} is highly nonconvex, causing standard DCA to frequently stall at suboptimal configurations. For instance, on the  \textbf{Digits} dataset with $k=7$, DCA stagnated at an objective value of $30.83$, whereas the proposed method ($p=1.5$) successfully navigated the landscape to reach a significantly better minimum of $29.59$. A similar breakthrough is observed in the \textbf{Wine} dataset for $k=10$, where the proposed method ($p=1.5$) achieved an objective value of $53.98$, drastically outperforming the DCA's terminal value of $131.39$.

\textbf{Computational Efficiency:} 
Table~\ref{tab:numerical_results} and Figure~\ref{fig:performance_time} illustrate the computational efficiency of the algorithms based on total CPU time. While the performance profile indicates standard DCA has a high probability of being the fastest solver initially, the table reveals this is primarily due to its rapid execution on simpler datasets like Diabetes and Iris (often completing in under 0.1s). However, DCA struggles significantly with scalability on more demanding problems, requiring up to 50.49 seconds for the Breast Cancer dataset and spiking to 105.67 seconds for Digits ($k=7$).

In contrast, the proposed envelope methods with $p \ge 2$ offer vastly superior consistency and speed on these challenging datasets. Specifically, L-BFGS ($p=2.0$) drastically reduces computational overhead, solving the same Breast Cancer and Digits instances in just a few seconds (under 8s and 6s, respectively). Conversely, the $p=1.5$ variant remains significantly slower, requiring the highest number of iterations and CPU time across most datasets (e.g., nearly 60s on Digits $k=10$). These results demonstrate that choosing $p=2.0$ or $p=3.0$ not only prevents the drastic runtime spikes seen in DCA but optimally balances overall computational efficiency with improved objective outcomes.

\textbf{Impact of Parameter $p$:} The proximity parameter $p$ governs the trade-off between smoothing and retaining the original geometry. Using $p=2.0$ or $p=3.0$ generally yields the highest computational efficiency and fastest runtimes. Using $p=1.5$ requires the highest computational effort (resulting in longer runtimes and more iterations) but preserves sharper geometric features is some cases, which occasionally allows it to uncover the absolute best local minima (as demonstrated in the Wine and Linnerud datasets). 

\vspace{-4mm}
\begin{table}[htbp]
\centering
\caption{Numerical Results for Clustering Experiments}
\label{tab:numerical_results}
\resizebox{\textwidth}{!}{%
\begin{tabular}{ll|rrr|rrr|rrr|rrr}
\toprule
& & \multicolumn{3}{c|}{\textbf{DCA}} & \multicolumn{3}{c|}{\textbf{L-BFGS ($p=1.5$)}} & \multicolumn{3}{c|}{\textbf{L-BFGS ($p=2.0$)}} & \multicolumn{3}{c}{\textbf{L-BFGS ($p=3.0$)}} \\
\cmidrule(lr){3-5} \cmidrule(lr){6-8} \cmidrule(lr){9-11} \cmidrule(lr){12-14}
\textbf{Dataset} & $\boldsymbol{k}$ & \textbf{Value} & \textbf{Time (s)} & \textbf{Iter} & \textbf{Value} & \textbf{Time (s)} & \textbf{Iter} & \textbf{Value} & \textbf{Time (s)} & \textbf{Iter} & \textbf{Value} & \textbf{Time (s)} & \textbf{Iter} \\
\midrule

\multirow{3}{*}{Breast Cancer} 
& 4  & 172.07 & 10.21 & 36 & 165.19 & 3.49 & 51 & 165.19 & 1.97 & 53 & 165.19 & 3.01 & 47 \\
& 7  & 143.12 & 31.26 & 49 & 129.28 & 13.47 & 82 & 129.28 & 4.70 & 76 & 129.28 & 10.12 & 82 \\
& 10 & 143.12 & 50.49 & 49 & 129.26 & 19.62 & 69 & 130.05 & 7.65 & 76 & 129.28 & 18.19 & 88 \\
\midrule

\multirow{3}{*}{Diabetes} 
& 4  & 0.14 & 0.05 & 3 & 0.18 & 2.96 & 14 & 0.14 & 0.54 & 4 & 0.14 & 0.53 & 4 \\
& 7  & 0.14 & 0.06 & 3 & 0.32 & 8.41 & 20 & 0.14 & 1.05 & 4 & 0.14 & 0.74 & 5 \\
& 10 & 0.14 & 0.05 & 3 & 0.46 & 33.65 & 51 & 0.12 & 5.80 & 14 & 0.14 & 1.26 & 6 \\
\midrule

\multirow{3}{*}{Digits} 
& 4  & 32.46 & 4.30 & 26 & 34.47 & 3.51 & 7 & 34.47 & 2.24 & 6 & 34.47 & 2.02 & 6 \\
& 7  & 30.83 & 105.67 & 165 & 29.59 & 31.30 & 37 & 29.67 & 5.52 & 14 & 29.67 & 10.80 & 16 \\
& 10 & 34.47 & 4.68 & 4 & 28.70 & 59.47 & 36 & 32.44 & 11.36 & 18 & 32.44 & 20.03 & 17 \\
\midrule

\multirow{3}{*}{Iris} 
& 4  & 0.65 & 0.29 & 28 & 0.67 & 0.83 & 12 & 0.85 & 0.63 & 12 & 0.86 & 0.60 & 11 \\
& 7  & 0.65 & 0.39 & 29 & 0.68 & 1.74 & 15 & 0.55 & 1.44 & 20 & 0.66 & 0.92 & 10 \\
& 10 & 0.85 & 0.17 & 10 & 0.76 & 2.19 & 15 & 0.55 & 1.26 & 13 & 0.88 & 1.12 & 11 \\
\midrule

\multirow{3}{*}{Linnerud} 
& 4  & 35.08 & 0.30 & 8 & 28.71 & 0.50 & 17 & 35.08 & 0.59 & 18 & 35.08 & 0.96 & 20 \\
& 7  & 35.08 & 0.41 & 8 & 26.39 & 2.75 & 26 & 28.71 & 0.36 & 18 & 29.25 & 1.78 & 25 \\
& 10 & 29.24 & 0.63 & 9 & 22.50 & 3.10 & 28 & 29.24 & 1.50 & 26 & 35.09 & 2.01 & 24 \\
\midrule

\multirow{3}{*}{Wine} 
& 4  & 131.39 & 0.98 & 12 & 131.39 & 0.61 & 12 & 91.53 & 0.35 & 18 & 91.53 & 0.60 & 16 \\
& 7  & 131.39 & 1.22 & 12 & 75.71 & 1.48 & 19 & 131.39 & 0.46 & 10 & 131.39 & 0.77 & 9 \\
& 10 & 131.39 & 1.89 & 12 & 53.98 & 3.88 & 34 & 131.39 & 0.64 & 10 & 131.39 & 1.12 & 9 \\
\bottomrule
\end{tabular}%
}
\end{table}

%%%%%%%%%%%%%%%%%%%%%%%%%%
\begin{figure}[htbp]
    \centering
    \includegraphics[width=13cm]{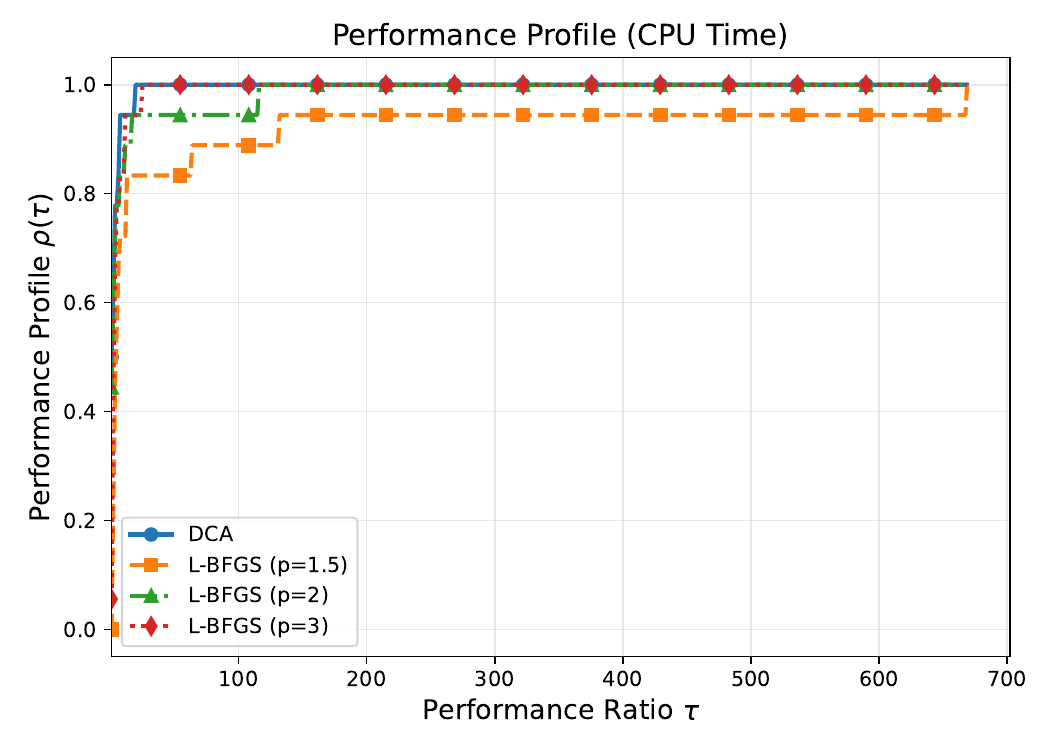}
    \vspace{-3mm}
    \caption{Performance profiles comparing DCA and IDEA with LBFGS directions in terms of CPU Time.}
    \label{fig:performance_time}
\end{figure}

%%%%%%%%%%%%%%%%%%%%%%%%%%%%%%%%%%%%%%%%%%%%%%%%%%%%%%%%%%%%%%%%%%%%%%%%%%%%%%%%%%%%%%%%%%%%%%%%%%%%%%%%%%%%%%%%%%%%%%%%%%%%%%%%%%%%%%%%%%%%%%%%%%%%
%%% Section: Concluding remarks
%%%%%%%%%%%%%%%%%%%%%%%%%%%%%%%%%%%%%%%%%%%%%%%%%%%%%%%%%%%%%%%%%%%%%%%%%%%%%%%%%%%%%%%%%%%%%%%%%
\section{Conclusion} \label{sec:Conclusion}
In this paper, we studied a class of descent methods (named IDEA) based on the difference of high-order Moreau envelopes (HOME-DC) for difference-of-convex (DC) optimization. We introduced HOME-DC and established its main analytical and differential properties, including approximation bounds, differential properties, and connections between DC-stationary points of the original DC problem and critical points of HOME-DC.
Next, we developed a generic IDEA on the basis of an inexact first-order oracle of HOME-DC, constructed using approximate proximal mappings. Under suitable assumptions on inexactness errors, we proved that every accumulation point of the generated sequence was a critical point of HOME-DC. Our preliminary numerical experiments on a sparse clustering problem confirmed the practical performance of the proposed approach.

%%%%%%%%%%%%%%%%%%%%%%%%%%%%%%%%%%%%%%%%%%%%%%%%%%%%%%%%%%%%%%%%%%%%%%%%%%%%%%%%%%%%%%%%%%%%%%%%%%%%%%%%%%%%%%%%%%%%%%%%%%%%%%%%%%%%%%%%%%%%%%%%%%%%
%%% End Section: Concluding remarks %%%%%%%%%%%%%%%%%%%%%%%%%%%%%%%%%%%%%%%%%%%%%%%%%%%%%%%%%%%%%%%%%%%%%%%%%%%%%%%%%%%%%%%%%%%%%%%%%

\subsection*{\textbf{Funding Information}}
The authors were partially supported by the Research Foundation Flanders (FWO) research project G081222N and UA BOF DocPRO4 projects with IDs 46929 and 48996.

\bibliographystyle{spbasic}
\bibliography{references}

\end{document}